\documentclass[11pt]{article}
\usepackage{amsmath,amssymb,amsthm}
\usepackage[usenames,dvipsnames]{color}
\usepackage{enumerate}
\usepackage{geometry}
\usepackage{graphicx}
\usepackage{fullpage}

\numberwithin{equation}{section}
\newtheorem{theorem}[equation]{Theorem}

\newtheorem{lemma}[equation]{Lemma}
\theoremstyle{definition}
\newtheorem{remark}[equation]{Remark}

\newtheorem{algorithm}{Algorithm}[section]

\newcommand{\dt}{{\Delta t}}

\newcommand{\norm}[1]{\left\Vert#1\right\Vert}

\newcommand{\R}{\mathbb{R}}







\usepackage[backgroundcolor=gray!30,linecolor=black]{todonotes}

\title{Simple and efficient continuous data assimilation of evolution equations via algebraic nudging}

\author{
Leo G. Rebholz\thanks{Department of Mathematical Sciences, Clemson University, Clemson, SC, 29634;
email: rebholz@clemson.edu, partially supported by NSF Grant DMS 1522191.}
\and
Camille Zerfas
\thanks{Department of Mathematical Sciences, Clemson University, Clemson, SC, 29634;
email: czerfas@clemson.edu, partially supported by NSF Grant DMS 1522191.}
}

\begin{document}
\date{}
\maketitle

\begin{abstract}
We introduce, analyze and test a new interpolation operator for use with continuous data assimilation (DA) of evolution equations that are discretized spatially with the finite element method.  The interpolant is constructed as an approximation of the $L^2$ projection operator onto piecewise constant functions on a coarse mesh, but which allows nudging to be done completely at the linear algebraic level, independent of the rest of the discretization, with a diagonal matrix that is simple to construct.  We prove the new operator
maintains stability and accuracy properties, and we apply it to algorithms for both fluid transport DA and incompressible Navier-Stokes DA.  For both applications we prove the DA solutions with arbitrary initial conditions converge to the true solution (up to optimal discretization error) exponentially fast in time, and are thus long-time accurate.  Results of several numerical tests are given, which both illustrate the theory and demonstrate its usefulness on practical problems.
\end{abstract}

\section{Introduction}

Data assimilation (DA) algorithms are widely used in weather prediction, climate modeling, and many other applications \cite{Kalnay_2003_DA_book}.  The term DA refers in general to schemes that incorporate observational data in simulations in order to increase accuracy and/or obtain better initial conditions.  There is a large amount of literature on the general topic of DA \cite{Daley_1993_atmospheric_book,Kalnay_2003_DA_book,Law_Stuart_Zygalakis_2015_book}, and several different techniques exist, such as the Kalman Filter, 3D/4D Var and others \cite{CHJ69,Daley_1993_atmospheric_book,Kalnay_2003_DA_book,Law_Stuart_Zygalakis_2015_book}.  Our interest herein
is with an approach recently pioneered by Azouani, Olson, and Titi \cite{Azouani_Olson_Titi_2014,Azouani_Titi_2014} (see also \cite{Cao_Kevrekidis_Titi_2001,Hayden_Olson_Titi_2011,Olson_Titi_2003}), which we call continuous DA.  This method adds a feedback control (penalty) term at the partial differential equation (PDE) level to nudge the computed solution toward the reference solution corresponding to observed data.  While this type of DA is similar to classical Newtonian relaxation {\color{black}methods \cite{Anthes_1974_JAS,Hoke_Anthes_1976_MWR},} the specific use of interpolation is a fundamental difference.  

The purpose of this paper is to introduce a new, simple, and accurate interpolation operator intended for use with DA algorithms for time dependent PDEs, where the DA is implemented with a nudging term, and a finite element spatial discretization is used.  The general form of such DA methods is
\[
v_t + N(v) + \mu I_H (v-u) = f,
\]
along with appropriate boundary conditions, where $I_H$ is an interpolation operator, $\mu>0$ is a nudging parameter, and $u$ is the true solution which can be partially observed so that $I_H(u)$ is considered known. In such methods, nudging of the computed solution is done by penalizing its difference to measurement data, i.e. $I_H(v)$ is penalized to be close to $I_H(u)$.   Since the initial work of Azouani, Olson and Titi in 2014 \cite{Azouani_Olson_Titi_2014}, there has been a large amount of work done for these types of methods, including for many different equations arising from physics such as Navier-Stokes and Boussinesq, for different variations of DA implementation, for noisy data, etc. \cite{Foias_Mondaini_Titi_2016,Bessaih_Olson_Titi_2015,Altaf_Titi_Knio_Zhao_Mc_Cabe_Hoteit_2015,
Larios_Pei_2017_KSE_DA_NL,Lunasin_Titi_2015,Albanez_Nussenzveig_Lopes_Titi_2016,Biswas_Martinez_2017,Farhat_Jolly_Titi_2015,Farhat_Lunasin_Titi_2016abridged,Farhat_Lunasin_Titi_2016benard,Foyash_Dzholli_Kravchenko_Titi_2014,GlattHoltz_Kukavica_Vicol_2014,Jolly_Martinez_Titi_2017,Larios_Lunasin_Titi_2015,Markowich_Titi_Trabelsi_2016,IMT18,LRZ18}.
Roughly speaking, in most of these papers it is proven that $v$ converges to $u$ exponentially fast in time (in an appropriate norm), for essentially any initial condition $v_0$, provided $\mu$ is in a particular range of values which depends on the coarse mesh width.

Despite all the recent work on continuous DA for various evolution equations, {\color{black} to date} there has been almost no work done for DA algorithms where finite element (FE) spatial discretizations are used.  To our knowledge, the papers \cite{IMT18,LRZ18,GNT18} are the only ones to consider this subject, and so far there is essentially no literature on best ways to implement continuous DA in the FE setting so that its use is feasible for large scale problems and with legacy codes.  In particular, in the FE setting, a fine mesh is needed to solve the FE problem, and a coarse mesh is needed for the implementation of nudging towards observation points.  While the use of multiple meshes is not uncommon, it leads to much more complicated programming and often does not (reasonably) allow for use with legacy codes.

The motivation for this work is to enable {\it simple} (but still effective) implementation of continuous DA into existing FE codes for evolution equations.  We will construct a new, simple, efficient, and effective interpolation operator for use with DA that can be implemented completely at the linear algebraic level, without any changes to the rest of the discretization.  We will show that for a given discretization of an evolution equation (assume here backward Euler time stepping for simplicity) which at each time step yields the linear algebraic system
\[
\left( \frac{1}{\Delta t}M + A\right) \hat{v}^{n+1} = \frac{1}{\Delta t}M\hat v^n + \hat f,
\]
the nudging from continuous DA with our new interpolation operator can be applied using an easily constructed diagonal matrix $D$ via
\[
\left( \frac{1}{\Delta t}M + A + \mu D \right) \hat{v}^{n+1} = \frac{1}{\Delta t}M\hat v^n + \hat f + \mu D \hat{u}^{n+1},
\]
with nonzero entries in $D$ occurring only where measurements of the true solution $u$ are taken, and $\hat{u}_j^{n+1} = u(x_j,t^{n+1})$ representing an observation {\color{black} at the FE} mesh node $x_j$ at time $t^{n+1}$.  Despite its simplicity, we prove that this new interpolation operator still maintains important stability and accuracy properties, similar to those described in \cite{LRZ18,GNT18,IMT18,Azouani_Olson_Titi_2014}.  We will then apply it to two applications, fluids transport and incompressible Navier-Stokes equations (NSE); in both cases, the DA algorithms give excellent analytical and numerical results. 

This paper is organized as follows. Immediately below, we introduce some notation and mathematical preliminaries.  Section 2 constructs and  analyzes the new interpolation operator, proving the critical stability and accuracy results for it, and discussing the FE implementation of the associated nudging term used in DA algorithms.  In section 3, we apply the new interpolation operator to DA of the fluid transport problem, proving a convergence result of the numerical DA solution to the true solution, exponentially fast, for essentially any initial condition, and also showing results of numerical tests that show the method is highly effective.  Section 4 applies the new interpolation operator to DA of incompressible NSE, which again reveals excellent analytical and numerical results.  Finally, conclusions and future directions are discussed in section 5.

\subsection{Notation and Preliminaries}

We consider $\Omega \subset \R^d$, $d$=2 or 3, to be a bounded open domain. The $L^2(\Omega)$ norm and inner product will be denoted by $\| \cdot \|$ and $(\cdot, \cdot)$, respectively, and all other norms will be appropriately labeled with subscripts. 

The Poincar\'e inequality will be used throughout this paper: there exists a constant $C_P$ depending only on $\Omega$ such that 
\[
\| \phi \| \le C_P \| \nabla \phi \| \ \forall \phi \in H^1_0(\Omega).
\]

The following lemma is proven in \cite{LRZ18}, and is useful in our analysis.

\begin{lemma}\label{geoseries}
Suppose constants $r$ and $B$ satisfy $r>1$, $B\ge 0$.  Then if the sequence of real numbers $\{a_n\}$ satisfies 
\begin{align}
ra_{n+1} \le a_n + B, \label{sequence}
\end{align}
we have that
\[
a_{n+1} \le a_0\left(\frac{1}{r}\right)^{n+1}  + \frac{B }{r-1}.
\]
\end{lemma}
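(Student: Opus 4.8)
The plan is to treat the hypothesis \eqref{sequence} as an explicit linear first-order recurrence and solve it by a short induction. Since $r>1>0$, dividing \eqref{sequence} through by $r$ preserves the inequality and puts it in the standard form
\[
a_{n+1} \le \frac{1}{r}\,a_n + \frac{B}{r}.
\]
Writing $c := 1/r \in (0,1)$, this reads $a_{n+1}\le c\,a_n + cB$, and the claimed bound is exactly $a_{n+1}\le c^{n+1}a_0 + \frac{B}{r-1}$. Note that no sign condition on the $a_n$ is needed: we only ever multiply inequalities by the positive quantities $c$ and $B\ge 0$, so all inequality directions are maintained throughout.

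First I would verify the base case $n=0$: from $a_1 \le c\,a_0 + cB$ and the observation that $cB = B/r \le B/(r-1)$ (which holds because $0< r-1 < r$ and $B\ge 0$), we get $a_1 \le c\,a_0 + \frac{B}{r-1}$, as required. For the inductive step, assume $a_n \le c^n a_0 + \frac{B}{r-1}$. Substituting into $a_{n+1}\le c\,a_n + cB$ gives $a_{n+1} \le c^{n+1}a_0 + \frac{cB}{r-1} + cB$. The whole argument then hinges on the single algebraic identity $\frac{c}{r-1} + c = \frac{1}{r(r-1)} + \frac{1}{r} = \frac{1}{r-1}$, which closes the induction and reproduces the stated conclusion.

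As an alternative that makes the mechanism transparent, one could instead unroll the recurrence directly: iterating $k$ times yields $a_{n+1}\le c^{k}a_{n+1-k} + B\sum_{j=1}^{k}c^{j}$, and setting $k=n+1$ gives $a_{n+1}\le c^{n+1}a_0 + B\sum_{j=1}^{n+1}c^{j}$. Bounding the finite geometric sum by its limit, $\sum_{j=1}^{n+1}c^{j}\le \frac{c}{1-c} = \frac{1/r}{1-1/r} = \frac{1}{r-1}$, recovers the result. There is no genuine obstacle here; the only point requiring care is the role of the hypothesis $B\ge 0$, which is what legitimizes replacing the partial geometric sum by the infinite sum (equivalently, the step $B/r\le B/(r-1)$ in the induction). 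Everything else is routine arithmetic with the contraction factor $c<1$.
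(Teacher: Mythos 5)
Your proof is correct: both the induction and the unrolled geometric-series argument are valid, the algebraic identity $\tfrac{c}{r-1}+c=\tfrac{1}{r-1}$ checks out, and you correctly isolate $B\ge 0$ as the hypothesis that justifies bounding the partial sum $B\sum_{j=1}^{n+1}(1/r)^j$ by $B/(r-1)$. The paper itself does not prove Lemma \ref{geoseries} but defers to \cite{LRZ18}, where the argument is the standard one of dividing \eqref{sequence} by $r$, iterating, and summing the resulting geometric series --- precisely your alternative derivation (as the label ``geoseries'' suggests) --- so your approach is essentially the same as the cited proof.
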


The DA algorithms we study use BDF2 time stepping, and their analysis utilizes the $G$-norm and $G$-stability theory see e.g. \cite{HW02}, \cite{CGSW13}.  Briefly, define the matrix
\begin{align*}
G = \begin{bmatrix}
1/2 & -1 \\ -1 & 5/2
\end{bmatrix},  
\end{align*}
and note that $G$ induces the norm $\|x\|_G^2 := (x, Gx)$, which is equivalent to the $(L^2)^2$ norm:
\[ C_l \|x\|_G \leq \|x\| \leq C_u \|x\|_G  \]
where $C_l=3-2\sqrt{2}$ and $C_u=3+2\sqrt{2}$.
The following property is well-known \cite{HW02}. Set $\chi_v^n := [v^{n-1}, v^n]^T$, if $v^i \in L^2(\Omega)$, $i = n-1, n$, we have 
\begin{align}
\left(\frac{1}{2}(3v^{n+1} - 4v^n + v^{n-1}), v^{n+1}\right) = \frac{1}{2}(\|\chi_v^{n+1}\|_G^{2} - \|\chi_v^n\|_G^2) + \frac{1}{4} \|v^{n+1} - 2v^n + v^{n-1}\|^2 .
\label{Gidentity}
\end{align}

\section{A new interpolation operator for efficient continuous data assimilation}

Let $X_h=P_k(\tau_h)$ be a FE space consisting of globally continuous piecewise degree $k$ polynomials on a regular mesh $\tau_h$, and $X_H = P_0(\tau_H)$ be a FE space consisting of piecewise constant functions over a coarser mesh $\tau_H$.  We make the assumption that every element of $\tau_H$ contain at least one node from $\tau_h$, which is expected to be true since $\tau_H$ is typically much coarser than $\tau_h$ in practice.  For our purposes, one can assume that the nodes of $\tau_H$ are the points where observations of the true solution are made.

Denote by $\{ x_j\}_{j=1}^{M}$ the set of nodes of $X_h$, and $\{ x_{k_j} \}_{j=1}^N$ the set of nodes of $X_H$, noting that each coarse mesh node is also a fine mesh node.  Further, the coarse mesh nodes also satisfy the property that {\color{black} for each} coarse mesh element $E_j^H$, the node $x_{k_j}$ is contained in element $E_j^H$ and is closest to its center.  The assumed relationship between the fine and coarse meshes guarantees the existence of such a node for each element.

Denote the basis functions of these two FE spaces by
\begin{align*}
X_h : \{ \psi_1,\ \psi_2, \ \psi_3,\ ...,\ \psi_M \},\\
X_H : \{ \phi_1,\ \phi_2, \ \phi_3,\ ...,\ \psi_N \}.
\end{align*}
We assume the usual property of FE basis functions that $\psi_i(x_j)=1$ if $i=j$ and 0 otherwise, and $\phi_i(x_{k_j})=1$ if $j=i$ and 0 otherwise.  Note that this implies that $\phi_i=1$ on all of $E_i^H$ since basis functions of $X_H$ are piecewise constant.

To help define our new interpolant, we first consider the usual $L^2$ projection of a function $u\in L^2(\Omega)$ onto $X_H$, which is defined by: Find $P_{L^2}^H(u) \in X_H$ satisfying
\[
(P_{L^2}^H(u),v_H) = (u,v_H)\ \forall v_H\in X_H, 
\]
which is equivalent to
\[
(P_{L^2}^H(u),\phi_j) = (u,\phi_j)
\]
holding for all $1\le j\le N$.  Since $P_{L^2}^H(u) \in X_H$, we can write $P_{L^2}^H(u) = \sum_{m=1}^N \beta_m \phi_m$, and thus
\[
\sum_{m=1}^N \beta_m (\phi_m,\phi_j) = (u,\phi_j)
\]
for all $1\le j\le N$.  Since $X_H$ consists of piecewise constant basis functions with non-overlapping support, each of these equations reduces, yielding for $j=1,\ 2,\ ,...,\ N$,
\begin{equation}
 \beta_j (\phi_j,\phi_j) = (u,\phi_j) \implies \beta_j  = \frac{1}{area(E^H_j)}\int_{E^H_j} u\ dx. \label{L2proj1}
\end{equation}

We now define our new interpolation operator, denoted $\tilde{P}_{L^2}^H$, by
\begin{equation}
\tilde{P}_{L^2}^H(u) = \sum_{j=1}^N u(x_{k_j}) \phi_j. \label{newinterp}
\end{equation}
This operator can be considered an approximation of $P_{L^2}^H$, as it differs only in that the last integral in \eqref{L2proj1} is approximated with a quadrature rule that is exact on constants.  Indeed, if on each coarse mesh element $E_j^H$, we make the quadrature approximation in \eqref{L2proj1} by
\[
\int_{E^H_j} u\ dx \approx u(x_{k_j}) area(E^H_j),
\]
then the new interpolation operator $\tilde{P}_{L^2}^H$ is recovered from ${P}_{L^2}^H$.

\subsection{Implementation of the nudging term with interpolation operator $\tilde{P}_{L^2}^H$}

A key property of $\tilde{P}_{L^2}^H$ is how it acts on the basis functions of $X_h$.  For each basis function $\psi_i$ of $X_h$, we calculate using \eqref{newinterp} that
\[
\tilde{P}_{L^2}^H(\psi_i) = \sum_{j=1}^N \psi_i( x_{k_j} ) \phi_j = \left\{ \begin{array}{l l}  \phi_j &\mbox{ if } i=k_j, \\ 0 &\mbox{ else. }\end{array} \right. 
\]
Thus for each coarse mesh (piecewise constant) basis function, there is exactly one fine mesh basis function that $\tilde P_{L^2}^H$ maps to it (the $k_j^{th}$ basis function); all other fine mesh basis functions get mapped to zero by $\tilde P_{L^2}^H$.  Hence for the $M$ finite element basis functions $\psi_i$, $i=1,\ 2,\ ...,\ M$, the new operator $\tilde{P}_{L^2}^H$ maps $N$ of them one to one and onto the $X_H$ basis functions, and maps the other M-N of them to 0.

Consider now the FE implementation of the nudging term using this new interpolation operator $\tilde{P}_{L^2}^H$.  It will be written in DA algorithms as (see sections 3 and 4) \footnote{While it is typical for the weak formulation of DA nudging terms to take the form $\mu(I_H(u_h),v_h)$, applying the interpolation operator to the test function as well is necessary for a simple and efficient implementation, and as we show in later sections this does not adversely affect stability or convergence results of the associated DA algorithms.}
\[
\mu (\tilde P_{L^2}^H(u_h),\tilde P^H_{L^2}({\color{black}\chi_h})),
\]
and so creates a matrix contribution to the linear system of the form
\begin{align*}
\mu D_{mn} & = \mu (\tilde P_{L^2}^H(\psi_m),\tilde P_{L^2}^H(\psi_n)) \\
& = \mu \left( \sum_{i=1}^N \psi_m(x_{k_i})\phi_i, \sum_{j=1}^N \psi_n(x_{k_j}) \phi_j \right)\\
& = \mu \sum_{j=1}^N \left(  \psi_m(x_{k_j})\phi_j, \psi_n(x_{k_j}) \phi_j \right),
\end{align*}
with the last step holding since the $\phi_i's$ are non-overlapping piecewise constants.  But since $\psi_m(x_{k_j})$ is only nonzero if $m=k_j$, we have shown that
{\color{black}
\[
D_{mn} = \left \{  \begin{array}{l l} area(E^H_j) & \mbox{if } n=m=k_j, \\ 0 & \mbox{else.} \end{array} \right. 
\]}
This reveals that $D$ is diagonal, and is nonzero only at entries $(k_j,k_j)$.

The right hand side nudging term takes the form $\mu (\tilde{P}_{L^2}^H(u_{true}),\tilde{P}_{L^2}^H({\color{black}\chi_h}))$, and we can similarly derive
\[
 \mu (\tilde P_{L^2}^H(u_{true}),\tilde P_{L^2}^H(\psi_m)) 
 = \mu \left( \sum_{i=1}^N u_{true}(x_{k_i})\phi_i, \sum_{j=1}^N \psi_m(x_{k_j}) \phi_j \right)
 = \mu \sum_{j=1}^N \left(  u_{true}(x_{k_j})\phi_j, \psi_m(x_{k_j}) \phi_j \right) .
\]
Hence if $m\ne k_j$ for all $1\le j\le N$, then the term is zero,  but otherwise
\begin{equation}
 \mu (\tilde P_{L^2}^H(u_{true}),\tilde P_{L^2}^H(\psi_m)) = \mu\ u_{true}(x_{k_j}) \ area(E^H_j). \label{Dutrue}
\end{equation}
Denoting the vector $\hat u_{true}$ by $\hat u_{{true}_j} = u_{true}(x_j)$, we can write this right hand side nudging contribution as $\mu D \hat{u}_{true}$.

\begin{remark}
While it would potentially jeopardize the relevance of the convergence analysis in the following sections,
algebraic nudging could be implemented with 
\[
\tilde \mu D_{mn} =  \left \{  \begin{array}{l l}  \tilde \mu & \mbox{if } n=m=k_j, \\ 0 & \mbox{else.} \end{array} \right. 
\]
In this case, one could still consider $D$ to be the matrix arising from nudging, but with $\mu$ chosen locally to produce
$\tilde \mu$.  On quasi-uniform meshes, this could be a reasonable approach, if an even simpler implementation is desired.
\end{remark}

\subsection{Properties of $\tilde{P}_{L^2}^H$}

We now prove the fundamental stability and accuracy properties for the new interpolation operator.

\begin{lemma}\label{mainlemma}
For a given mesh $\tau_H(\Omega)$ with convex elements and maximum element diameter $H\le O(1)$, the operator $\tilde{P}_{L^2}^H$  satisfies for any $w\in H^1(\Omega)$,
\begin{align}
\| \tilde{P}_{L^2}^H  (w) - w \| &\le C H \| \nabla w\|, \label{interp1}
\\ \| \tilde{P}_{L^2}^H(w) \| &\leq  \|w \| + C H \| \nabla w \|. \label{interp2}
\end{align}

\end{lemma}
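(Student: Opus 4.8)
The plan is to establish the consistency bound \eqref{interp1} first and then obtain the stability bound \eqref{interp2} essentially for free, since by the triangle inequality $\norm{\tilde{P}_{L^2}^H(w)} \le \norm{\tilde{P}_{L^2}^H(w) - w} + \norm{w}$, so \eqref{interp2} is an immediate consequence of \eqref{interp1}. Thus the whole proof reduces to \eqref{interp1}. Because $\tilde{P}_{L^2}^H(w)$ is piecewise constant with $\tilde{P}_{L^2}^H(w)|_{E_j^H} = w(x_{k_j})$ by \eqref{newinterp}, I would split the error over the coarse elements,
\[
\norm{\tilde{P}_{L^2}^H(w) - w}^2 = \sum_{j=1}^N \int_{E_j^H} |w(x_{k_j}) - w(x)|^2 \diff x ,
\]
and then work on a single convex element $E_j^H$. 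The natural comparison object is the element mean $\bar{w}_j := \frac{1}{area(E_j^H)}\int_{E_j^H} w \diff x$, which by \eqref{L2proj1} is exactly the value of the \emph{true} $L^2$ projection $P_{L^2}^H(w)$ on $E_j^H$. Writing $w(x_{k_j}) - w = (w(x_{k_j}) - \bar{w}_j) + (\bar{w}_j - w)$ and applying the triangle inequality on each element splits the local error into a point-value-versus-mean piece and a mean-versus-function piece.

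The mean-versus-function piece is classical: on a convex element of diameter at most $H$, the Poincar\'e--Wirtinger inequality (with the sharp Payne--Weinberger constant $H/\pi$, which is precisely where convexity of the elements is used) gives $\norm{\bar{w}_j - w}_{L^2(E_j^H)} \le C H \norm{\nabla w}_{L^2(E_j^H)}$. For the point-value piece, since $w(x_{k_j}) - \bar{w}_j$ is constant I would write it as $\frac{1}{area(E_j^H)}\int_{E_j^H}\big(w(x_{k_j}) - w(y)\big)\diff y$ and, using convexity to join $x_{k_j}$ to each $y$ by a segment lying in $E_j^H$, estimate $w(x_{k_j}) - w(y)$ formally by a line integral of $\nabla w$; multiplying by $area(E_j^H)^{1/2}$ would then bound $\norm{w(x_{k_j}) - \bar{w}_j}_{L^2(E_j^H)}$ by $C H \norm{\nabla w}_{L^2(E_j^H)}$ as well. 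Summing over $j$ and using that the local gradient norms assemble to $\norm{\nabla w}^2$ on $\Omega$ while $H$ is the maximal diameter then yields \eqref{interp1}.

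The main obstacle is exactly this point-value term. In dimension $d \ge 2$ point evaluation is \emph{not} a bounded functional on $H^1$, so the line-integral estimate above cannot be applied naively to an arbitrary $w \in H^1(\Omega)$ — the constant in the raw segment estimate degenerates near $x_{k_j}$. The role of the hypothesis that $x_{k_j}$ is the node closest to the center of $E_j^H$, combined with convexity, is to make $E_j^H$ star-shaped with respect to a ball around $x_{k_j}$ of size comparable to $H$, which is what keeps the geometric constant in the point-value estimate uniformly bounded in $H$; this is the delicate quantitative step I would spend the most care on. In the applications of Sections 3 and 4 the operator is only ever applied to functions in $X_h$ and to the (smooth) true solution, for which the nodal values $w(x_{k_j})$ are genuinely well defined, so establishing the bound for this relevant class of functions suffices for the downstream convergence analysis.
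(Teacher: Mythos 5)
Your proposal is essentially the paper's own proof. The paper uses the identical decomposition: it compares $\tilde{P}_{L^2}^H(w)$ with $P_{L^2}^H(w)$, whose coefficient on $E_j^H$ is exactly your element mean $\bar w_j$; it bounds $\| P_{L^2}^H(w) - w\| \le CH\|\nabla w\|$ by citing Proposition 1.135 of \cite{ern2013theory} (the same Poincar\'e--Wirtinger content you invoke element-wise, with convexity playing the same role); it bounds the point-value-versus-mean piece as ``the error in a quadrature rule exact on constants''; and it obtains \eqref{interp2} from \eqref{interp1} by the triangle inequality, exactly as you do.

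The only substantive divergence is the point-value term, and there your diagnosis is more careful than the paper's treatment. The paper simply asserts $|\alpha_i - \beta_i| \le diam(E_i^H)\, area(E_i^H)^{-1/2} \|\nabla w\|_{L^2(E_i^H)}$ and moves on; you attempt to prove it and correctly hit the obstruction that point evaluation is unbounded on $H^1$ for $d \ge 2$. This is a genuine gap, but it is a gap shared by the paper: for general $w \in H^1(\Omega)$ the value $w(x_{k_j})$ is not even defined, so neither argument (nor, strictly speaking, the lemma ``for any $w \in H^1(\Omega)$'') can stand as stated. Two caveats on your proposed repairs, though. First, star-shapedness of $E_j^H$ with respect to a ball of radius comparable to $H$ centered at $x_{k_j}$ does \emph{not} rescue the estimate: point evaluation at the center of a ball is still unbounded on $H^1$ of that ball, and no geometric hypothesis can change that. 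Second, restricting to the class of functions actually used downstream is the right instinct but is subtler than ``nodal values are well defined'': in Sections 3 and 4 the operator is applied to fine-mesh FE functions (e.g. $\phi_h^{n+1}$), and for those a nodal spike of width $h$ at $x_{k_j}$ shows that \eqref{interp1} cannot hold with a constant uniform in $h/H$ when $d=3$ (in $d=2$ the failure is only logarithmic in $h$). A clean fix needs either more regularity (e.g. $w \in H^2$, via a Bramble--Hilbert argument on each convex element, which covers the smooth true solution) or a local average in place of the nodal value; as written, your proof and the paper's stand or fall together on exactly the step you flagged as delicate.
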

\begin{proof}
We begin our proof of \eqref{interp1} by expanding $\tilde{P}_{L^2}^H$ and ${P}_{L^2}^H$ in the $X_H$ basis, using their definitions:
\begin{align}
 {P}_{L^2}^H  (w) & = \sum_{i=1}^N \alpha_j \phi_j, \ \ \ \left(\alpha_j=\frac{1}{area(E^H_j)} \int_{E^H_j} w \ dx\right), \label{adef} \\
 \tilde{P}_{L^2}^H  (w) & = \sum_{i=1}^N \beta_j \phi_j, \ \ \ \left(\beta_j=w( x_{k_j}) \right). \label{bdef}
\end{align}
Now subtracting their difference inside of the $L^2(\Omega)$ norm yields, thanks to these basis functions being non-overlapping piecewise constants,
\begin{align}
\|  \tilde{P}_{L^2}^H  (w) -  {P}_{L^2}^H  (w) \| 
& = \norm{ \sum_{i=1}^N (\alpha_i - \beta_i) \phi_i } \nonumber  \\
& = \sum_{i=1}^N \bigg| \alpha_i - \beta_i \bigg| \norm{ \phi_i } \nonumber \\
& = \sum_{i=1}^N area(E^H_i) \cdot \bigg| \alpha_i - \beta_i \bigg|. \label{projprop1}
\end{align} 
Consider now $| \alpha_i - \beta_i |$.  Using the definitions of $\alpha_i$ and $\beta_i$, we bound the difference by
\begin{align*}
| \alpha_i - \beta_i |& = \bigg| \frac{1}{area(E^H_j)} \int_{E^H_j} w\ dx - w( x_{k_j} )  \bigg|
\end{align*}
This difference is precisely the error in a quadrature rule over $E^H_i$ that is exact on constants, and thus one can obtain the bound
\begin{align*}
| \alpha_i - \beta_i |& \le  \frac{diam(E^H_i)}{area(E^H_i)^{1/2}} \| \nabla w \|_{L^2(E^H_i)}.
\end{align*}
Combining this estimate with \eqref{projprop1} provides
\begin{align}
\|  \tilde{P}_{L^2}^H  (w) -  {P}_{L^2}^H  (w) \| 
& \le 
\sum_{i=1}^N area(E^H_i)^{1/2} diam(E^H_i) \| \nabla w \|_{L^2(E^H_i)} \nonumber \\
& \le \max_i \left( area(E^H_i)^{1/2} diam(E^H_i) \right) \sum_{i=1}^N \| \nabla w \|_{L^2(E^H_i)} \nonumber \\
& =  \max_i \left( area(E^H_i)^{1/2} diam(E^H_i) \right)  \| \nabla w \|_{L^2(\Omega)}. 
\end{align}
Now using the assumption that the maximum element diameter is $H$, we have 
\[
\|  \tilde{P}_{L^2}^H  (w) -  {P}_{L^2}^H  (w) \| \le C H^{1+ d/2} \| \nabla w \|.
\]
This result, the triangle inequality, and Proposition 1.135 in \cite{ern2013theory} now provide
\begin{align*}
\|  \tilde{P}_{L^2}^H  (w) -   w \| 
& \le  
\|  \tilde{P}_{L^2}^H  (w) -  {P}_{L^2}^H  (w) \|  + 
\|  {P}_{L^2}^H  (w) -   w \| \\
& \le 
C H^{1+ d/2} \| \nabla w \| + C H \| \nabla w \| \\
& \le 
C H \| \nabla w\|,
\end{align*}
which proves \eqref{interp1}.  The result \eqref{interp2} follows immediately from \eqref{interp1} and the triangle inequality.

\end{proof}

\section{Application: Data assimilation in fluid transport equations}

As a first application, we consider applying DA with the new interpolation operator to the fluid transport equation, given by
\begin{eqnarray}
c_t + U \cdot \nabla c - \epsilon \Delta c &= &f, \label{ft1} \\
c(0) & = & c_0, \label{ft2}
\end{eqnarray}
with boundary conditions $c |_{\Gamma_1}=0$ and $\nabla c \cdot n |_{\Gamma_2}=0$, where $\partial \Omega = \Gamma_1 \cup \Gamma_2$ and meas$(\Gamma_1\cap \Gamma_2)=0$.  The DA algorithm we consider is given as follows, with a regular, conforming finite element mesh $\tau_h$, function space
\[
X_h = \{v \in H^1(\Omega),\ v|_{\Gamma_1}=0\} \cap P_k(\tau_h),
\]
and appropriately chosen coarse mesh $\tau_H$ and $X_H = P_0(\tau_H)$ (constructed as discussed in section {\color{black} 2}).  For simplicity of analysis, we will assume a smooth boundary and that $\partial\Omega=\Gamma_1$, however in the numerical tests we do use mixed boundary conditions.

The DA algorithm we consider reads as follows, with a BDF2 temporal discretization and FE spatial discretization.

\begin{algorithm} \label{ftda}
	Given any initial conditions $c_h^0,\ c_h^{1} \in X_h$, divergence free velocity field $U\in L^{\infty}(\Omega)$, forcing $f \in L^\infty(0,\infty; L^2(\Omega))$, true solution $c \in L^\infty(0,\infty; L^2(\Omega))$,  and nudging parameter $\mu\ge 0$, find $c_h^{n+1}\in X_h$  for $n = 1,2,...$, satisfying for all $\chi_h\in X_h$,
	\begin{align}
	\frac{1}{2\Delta t} \left( 3c_h^{n+1} - 4c_h^n + c_h^{n-1},\chi_h \right) & + (U \cdot \nabla c_h^{n+1},\chi_h) \nonumber \\
	+ \epsilon (\nabla v_h^{n+1},\nabla \chi_h) & + \mu (\tilde{P}_{L^2}^H (c_h^{n+1} - c(t^{n+1})),\tilde{P}_{L^2}^H \chi_h)=  {\color{black}(f^{n+1},\chi_h).} \label{femft1} 
	\end{align}
\end{algorithm}

The implementation of Algorithm \ref{ftda} is rather straightforward.  Standard finite element packages can construct  the matrices $M$, $S$, and $N$ arising from $(c_h^{n+1},\chi_h)$, $(\nabla c_h^{n+1},\nabla {\color{black}\chi_h})$, and $(U\cdot\nabla c_h^{n+1},\chi_h)$, respectively.  Once the observation points $\{ x_{k_j} \}$ are defined, a coarse mesh can be constructed so that element $E_j^H$ contains $x_{k_j}$.  Exactly how to construct the coarse mesh is somewhat arbitrary, so long as the elements are convex, one can calculate the area of each element, and a reasonable minimum angle condition is enforced.  Once this is done, the diagonal nudging matrix $D$ can be constructed, as defined above.  This gives, at each time step, the linear algebraic system for the unknown coefficient vector
\[
\left( \frac{1.5}{\Delta t}M + N + \epsilon S + \mu D \right) \hat{c}^{n+1} = \frac{1}{\Delta t}M\left(2\hat c^n - \frac12 \hat c^{n-1}\right) + \hat{f}^{n+1} + \mu D \hat{c}_{true}^{n+1}.
\]
In this way, the method can easily be adapted to enable DA to work with existing and/or legacy codes.

\subsection{Analysis of the DA algorithm}

We prove in this section the long-time stability, well-posedness and accuracy of  Algorithm \ref{ftda}.  We begin with stability and well-posedness.  In all of our analysis, we invoke the $G$-norm and $G$-stability theory often used with BDF2 analysis, see e.g. \cite{HW02,CGSW13}.

\begin{lemma}
	For any $\dt>0$ and $\mu\ge 0$, Algorithm \ref{ftda} is well-posed globally in time, and solutions are long time stable: for any $n > 1$, 
	\begin{align*}
	\bigg({\color{black}C_u^{-2}} (\|c_h^{n+1}\|^2   + \|c_h^n\|^2) & + \frac{\epsilon\dt}{2}\|c_h^{n+1}\|^2   \bigg)
	\\ & \leq \bigg(C_u^2 (\|c_h^{1}\|^2   + \|c_h^0\|^2) + \frac{\epsilon\dt}{2}\|c_h^{1}\|^2 + \frac{\mu\dt}{2}\|c_h^{1}\|^2  \bigg) \left( \frac{1}{1+\lambda\dt}  \right)^{n+1} 
	\\ & \,\,\,\, + C\epsilon^{-1}\lambda^{-1}\|f\|_{L^\infty(0, \infty ; H^{-1})}^{2} + C\mu\lambda^{-1} \|\tilde{P}_{L^2}^H c(t^{n+1})\|^2,
	\end{align*} 
	where $\lambda = \min\{  2 \dt^{-1}, \frac{\epsilon C_P^{-2}C_l^{2}}{2}  \}$.
	\label{ftstab}
\end{lemma}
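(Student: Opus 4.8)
The plan is to run the standard BDF2 energy argument and then feed the resulting one-step inequality into the geometric-series Lemma \ref{geoseries}. I would first dispatch well-posedness: since \eqref{femft1} is a finite-dimensional linear system at each step, existence is equivalent to uniqueness, which follows from coercivity. Testing the homogeneous problem (zero data) with $\chi_h=c_h^{n+1}$, the BDF2 mass term contributes $\tfrac{3}{2\dt}\|c_h^{n+1}\|^2$, diffusion gives $\epsilon\|\nabla c_h^{n+1}\|^2\ge 0$, nudging gives $\mu\|\tilde{P}_{L^2}^H c_h^{n+1}\|^2\ge 0$, and the convection term vanishes because $U$ is divergence free and $c_h^{n+1}$ has zero trace on $\partial\Omega=\Gamma_1$, so $(U\cdot\nabla c_h^{n+1},c_h^{n+1})=0$. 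Thus the only homogeneous solution is trivial, and the scheme is well-posed at every step, hence globally in time.

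For stability I would test \eqref{femft1} with $\chi_h=c_h^{n+1}$ on the full problem. The BDF2 term is rewritten via the $G$-identity \eqref{Gidentity} with $\chi_c^{n+1}:=[c_h^n,c_h^{n+1}]^T$, the convection term again drops, and the diffusion and nudging terms stay on the left. On the right, the data part of the nudging term is split by Young's inequality as $\mu(\tilde{P}_{L^2}^H c(t^{n+1}),\tilde{P}_{L^2}^H c_h^{n+1})\le \tfrac{\mu}{2}\|\tilde{P}_{L^2}^H c(t^{n+1})\|^2+\tfrac{\mu}{2}\|\tilde{P}_{L^2}^H c_h^{n+1}\|^2$, with the second piece absorbed by the nudging term on the left, and the forcing is bounded by $(f^{n+1},c_h^{n+1})\le \tfrac{1}{2\epsilon}\|f^{n+1}\|_{H^{-1}}^2+\tfrac{\epsilon}{2}\|\nabla c_h^{n+1}\|^2$, absorbing the gradient into half the diffusion. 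Multiplying by $2\dt$ produces the one-step inequality
\[
\|\chi_c^{n+1}\|_G^2-\|\chi_c^n\|_G^2+\epsilon\dt\|\nabla c_h^{n+1}\|^2+\mu\dt\|\tilde{P}_{L^2}^H c_h^{n+1}\|^2\le \mu\dt\|\tilde{P}_{L^2}^H c(t^{n+1})\|^2+\tfrac{\dt}{\epsilon}\|f^{n+1}\|_{H^{-1}}^2=:B.
\]

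To extract exponential decay I would apply Lemma \ref{geoseries} to the energy $a_n:=\|\chi_c^{n+1}\|_G^2+\tfrac{\epsilon\dt}{2}\|\nabla c_h^{n+1}\|^2$, carrying along a matching fraction of the nudging term (which, via \eqref{interp2}, produces the $\tfrac{\mu\dt}{2}\|c_h^1\|^2$ appearing in the initial data). Splitting $\epsilon\dt\|\nabla c_h^{n+1}\|^2$ into two halves, one stays in $a_n$ and the other must dominate $\lambda\dt\,a_n$; this is the crux. The $G$-norm bound $\|\chi_c^{n+1}\|_G^2\le C_u^2(\|c_h^n\|^2+\|c_h^{n+1}\|^2)$ couples two time levels, so the current diffusion controls only the $\|c_h^{n+1}\|^2$ part (through Poincar\'e, $\|c_h^{n+1}\|^2\le C_P^2\|\nabla c_h^{n+1}\|^2$), while the leftover $\|c_h^n\|^2$ cannot be bounded at level $n+1$ at all. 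The main obstacle is precisely this cross-time coupling, and the resolution is to \emph{retain} the previous step's diffusion $\tfrac{\epsilon\dt}{2}\|\nabla c_h^n\|^2$ inside $a_{n-1}$ and use it to control $\|c_h^n\|^2$. Matching the $\|\nabla c_h^n\|^2$ coefficients then forces exactly $\lambda\le \tfrac{\epsilon C_P^{-2}C_l^2}{2}$, whereas keeping the $\lambda\dt$-weighted diffusion carried inside $a_n$ under control forces the cap $\lambda\le 2\dt^{-1}$, giving $\lambda=\min\{2\dt^{-1},\tfrac{\epsilon C_P^{-2}C_l^2}{2}\}$ and the recursion $(1+\lambda\dt)a_n\le a_{n-1}+B$.

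Finally, Lemma \ref{geoseries} with $r=1+\lambda\dt$ yields $a_n\le a_0(1+\lambda\dt)^{-(n+1)}+\tfrac{B}{\lambda\dt}$. The decaying term reproduces the initial-data factor once $\|\chi_c^1\|_G^2\le C_u^2(\|c_h^1\|^2+\|c_h^0\|^2)$ is invoked by the norm equivalence, while $\tfrac{B}{\lambda\dt}=\mu\lambda^{-1}\|\tilde{P}_{L^2}^H c(t^{n+1})\|^2+\epsilon^{-1}\lambda^{-1}\|f\|_{L^\infty(0,\infty;H^{-1})}^2$ supplies the two additive terms. Bounding $a_n$ from below by $C_u^{-2}(\|c_h^{n+1}\|^2+\|c_h^n\|^2)+\tfrac{\epsilon\dt}{2}\|\nabla c_h^{n+1}\|^2$, again by norm equivalence, then gives the stated estimate.
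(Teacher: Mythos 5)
Your overall strategy is exactly the paper's: test with $\chi_h=c_h^{n+1}$, use the $G$-identity, Young's inequality on the nudging and forcing terms, multiply by $2\dt$, build an energy containing a fraction of the diffusion term, add the previous step's diffusion to both sides, and finish with Lemma \ref{geoseries} and norm equivalence. The gap is in the step you yourself call the crux: the absorption bookkeeping does not close with your constants. With your energy $a_n=\|[c_h^{n+1};c_h^n]\|_G^2+\tfrac{\epsilon\dt}{2}\|\nabla c_h^{n+1}\|^2$, the recursion $(1+\lambda\dt)a_n\le a_{n-1}+B$ requires
\[
\lambda\dt\left(\|[c_h^{n+1};c_h^n]\|_G^2+\tfrac{\epsilon\dt}{2}\|\nabla c_h^{n+1}\|^2\right)\le \tfrac{\epsilon\dt}{2}\|\nabla c_h^{n+1}\|^2+\tfrac{\epsilon\dt}{2}\|\nabla c_h^n\|^2,
\]
and \emph{three} terms must be absorbed: the two components of the $G$-norm, each costing up to $\lambda\dt\,C_l^{-2}C_P^{2}\|\nabla c_h^{i}\|^2$ ($i=n,\,n+1$), plus the carried diffusion $\lambda\dt\cdot\tfrac{\epsilon\dt}{2}\|\nabla c_h^{n+1}\|^2$. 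Your allocation pays for only two of them. If $\lambda=\tfrac{\epsilon C_P^{-2}C_l^{2}}{2}$, the two $G$-norm pieces already exhaust the entire right-hand budget, leaving nothing for the carried diffusion. In the opposite regime, where $\lambda=2\dt^{-1}$ binds, the carried diffusion alone costs $\lambda\dt\cdot\tfrac{\epsilon\dt}{2}\|\nabla c_h^{n+1}\|^2=\epsilon\dt\|\nabla c_h^{n+1}\|^2$, i.e.\ twice the available $\tfrac{\epsilon\dt}{2}\|\nabla c_h^{n+1}\|^2$ (the $\|\nabla c_h^n\|^2$ budget cannot help here), and the $G$-norm is still unpaid. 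So the step ``giving $\lambda=\min\{2\dt^{-1},\tfrac{\epsilon C_P^{-2}C_l^{2}}{2}\}$ and the recursion'' fails, and it cannot be repaired without shrinking $\lambda$.

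The repair is the paper's split: keep only $\tfrac{\epsilon\dt}{4}\|\nabla c_h^{n+1}\|^2$ in the energy, add $\tfrac{\epsilon\dt}{4}\|\nabla c_h^{n}\|^2$ to \emph{both} sides, use the pair of quarters $\tfrac{\epsilon\dt}{4}(\|\nabla c_h^{n+1}\|^2+\|\nabla c_h^{n}\|^2)$ to absorb the whole $G$-norm (constraint $\lambda\le\tfrac{\epsilon C_P^{-2}C_l^{2}}{4}$), and use the remaining half $\tfrac{\epsilon\dt}{2}\|\nabla c_h^{n+1}\|^2$ to absorb the carried quarter (constraint $\lambda\dt\cdot\tfrac{\epsilon\dt}{4}\le\tfrac{\epsilon\dt}{2}$, i.e.\ $\lambda\le 2\dt^{-1}$). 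This closes the recursion with $\lambda=\min\{2\dt^{-1},\tfrac{\epsilon C_P^{-2}C_l^{2}}{4}\}$. In fairness, you have reproduced an inconsistency already present in the paper --- the lemma statement advertises the factor $\tfrac12$ while the paper's own proof only delivers $\tfrac14$ --- but your version places the error inside the proof, where it breaks the argument rather than merely mislabeling the rate. Two smaller points: ``retaining the previous step's diffusion inside $a_{n-1}$'' must be phrased as adding $\tfrac{\epsilon\dt}{4}\|\nabla c_h^n\|^2$ to both sides, since a term sitting on the right-hand side cannot absorb anything on the left; and carrying ``a matching fraction of the nudging term'' in $a_n$ via \eqref{interp2} does not work --- \eqref{interp2} is an upper bound on $\|\tilde{P}_{L^2}^H(\cdot)\|$, not a lower bound, so it cannot convert nudging energy into $\|c_h\|^2$, and keeping that term in $a_n$ would impose yet another constraint on $\lambda$. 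The paper simply drops this nonnegative term; the $\tfrac{\mu\dt}{2}\|c_h^1\|^2$ in the statement is a harmless addition to the right-hand side.
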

\begin{proof}
	Choose $\chi_h = c_h^{n+1}$, which vanishes the convective term, and, after dropping the non-negative term $\frac{1}{4\dt}\|c_h^{n+1}-2c_h^n+c_h^{n-1}\|^2$ on the left hand side, yields
	\begin{align}
	\frac{1}{2\dt}\|[c_h^{n+1}; c_h^n]\|^2_G& + \epsilon\|\nabla c_h^{n+1}\|^2 + \mu \|\tilde{P}_{L^2}^H c_h^{n+1}\|^2  \nonumber
	\\ & \leq \frac{1}{2\dt}\|[c_h^{n}; c_h^{n-1}]\|^2_G + \mu|(\tilde{P}_{L^2}^H c(t^{n+1}), \tilde{P}_{L^2}^H c_h^{n+1})| + |(f^{n+1}, c_h^{n+1})|. \label{stab1}
	\end{align}
	The nudging term on the right hand side is bounded using Cauchy-Schwarz and Young's inequalities to obtain
	\begin{align*}
	\mu|(\tilde{P}_{L^2}^H c(t^{n+1}), \tilde{P}_{L^2}^H c_h^{n+1})| 
	& \leq C \mu\|\tilde{P}_{L^2}^H c(t^{n+1})\|^2 + \frac{\mu}{2} \| \tilde{P}_{L^2}^H c_h^{n+1}\|^2.
	\end{align*}
	The forcing term is bounded using the $H^{-1}(\Omega)$ norm as well as Young's inequality, via
	\begin{align*}
	|(f^{n+1}, c_h^{n+1})| \leq \frac{\epsilon^{-1}}{2}\|f\|_{L^\infty(0,\infty ; H^{-1})}^2 + \frac{\epsilon}{2}\|\nabla c_h^{n+1}\|^2 .
	\end{align*}
	Replacing the right hand side of \eqref{stab1} with these bounds and multiplying by $2\dt$, we obtain 
	\begin{align*}
	\|[c_h^{n+1}; c_h^n]\|^2_G + & \epsilon \dt \|\nabla c_h^{n+1}\|^2 + \mu\dt \|\tilde{P}_{L^2}^H c_h^{n+1}\|^2  \nonumber
	\\ & \leq \|[c_h^{n}; c_h^{n-1}]\|^2_G + C\mu\dt \|\tilde{P}_{L^2}^H c(t^{n+1})\|^2 + \epsilon^{-1}\dt\|f\|_{L^\infty(0,\infty;H^{-1})}^2. 
	\end{align*}
	Next, drop the positive nudging term on the left hand side and add $\frac{\epsilon\dt}{4}\|\nabla c_h^{n}\|^2$ to both sides of the equation to obtain
	\begin{align*}
	\bigg(\|[c_h^{n+1}; c_h^n]\|^2_G + \frac{\epsilon\dt}{4} & \|\nabla c_h^{n+1}\|^2\bigg) +  \frac{\epsilon\dt}{4}\bigg(\|\nabla c_h^{n+1}\|^2 + \|\nabla c_h^n\|^2 \bigg) + \frac{\epsilon\dt}{2}\|\nabla c_h^{n+1}\|^2   \nonumber
	\\ & \leq \|[c_h^{n}; c_h^{n-1}]\|^2_G+ \frac{\epsilon\dt}{4}\|\nabla c_h^n\|^2 + C\mu\dt \|\tilde{P}_{L^2}^H c(t^{n+1})\|^2+ \epsilon^{-1}\dt\|f\|_{L^\infty(0,\infty;H^{-1})}^2, 
	\end{align*}
	which then reduces using $G$-norm equivalence and Poincar\'e's inequality,
	\begin{align}
	\bigg(\|[c_h^{n+1}; & c_h^n]\|^2_G  + \frac{\epsilon\dt}{4}  \|\nabla c_h^{n+1}\|^2\bigg) +  \frac{\epsilon C_P^{-2}C_l^2\dt}{4}\bigg(\| [c_h^{n+1};c_h^n]\|_G^2 \bigg) + \frac{\epsilon\dt}{2}\|\nabla c_h^{n+1}\|^2   \nonumber
	\\ & \leq \|[c_h^{n}; c_h^{n-1}]\|^2_G+ \frac{\epsilon\dt}{4}\|\nabla c_h^n\|^2 + C\mu\dt \|\tilde{P}_{L^2}^H c(t^{n+1})\|^2+ \epsilon^{-1}\dt\|f\|_{L^\infty(0,\infty;H^{-1})}^2. \label{sstab4}
	\end{align} 
	Using $\lambda = \min\left\{ 2\dt^{-1}, \frac{\epsilon C_P^{-2}C_l^2}{4}  \right\}$, equation \eqref{sstab4} can be written as
	\begin{align}
	(1+\lambda&\dt) \bigg(\|[c_h^{n+1}; c_h^n]\|^2_G  + \frac{\epsilon\dt}{4}  \|\nabla c_h^{n+1}\|^2\bigg)  \nonumber
	\\ &\leq \bigg(\|[c_h^{n}; c_h^{n-1}]\|^2_G+ \frac{\epsilon\dt}{4}\|\nabla c_h^n\|^2\bigg) + \dt (C\mu \|\tilde{P}_{L^2}^H c(t^{n+1})\|^2+ \epsilon^{-1}\|f\|_{L^\infty(0,\infty;H^{-1})}^2). \label{stab5}
	\end{align}
	{\color{black}Lastly, we use Lemma \ref{geoseries} to write}
	\begin{align*}
	&\|[c_h^{n+1}; c_h^n]\|^2_G  + \frac{\epsilon\dt}{4}  \|\nabla c_h^{n+1}\|^2 
	\\ &\leq \left(\frac{1}{1+\lambda\dt}\right)^{n+1}\bigg(\|[c_h^{1}; c_h^{0}]\|^2_G+ \frac{\epsilon\dt}{4}\|\nabla c_h^0\|^2\bigg) + \lambda^{-1} (C\mu \|\tilde{P}_{L^2}^H c(t^{n+1})\|^2+ \epsilon^{-1}\|f\|_{L^\infty(0,\infty;H^{-1})}^2). 
	\end{align*}
	The stability result is completed using the $G$-norm equivalence. At each time step, the scheme is linear and finite dimensional, and thus this stability result immediately implies existence and uniqueness of the solutions, and thus well-posedness of the algorithm. 
	
\end{proof}

Next, we prove that solutions to Algorithm \ref{ftda} converge to the true solution, exponentially fast in time, up to optimal discretization error, provided restrictions on $H$ and $\mu$.  Our analysis will use the $H^1_0$ projection onto $X_h$, denoted by $\pi_h$ and defined by: Given $\phi\in H^1(\Omega)$, $\pi_h \phi \in X_h$ satisfies
	\[
	(\nabla \pi_h \phi,\nabla v_h) = (\nabla \phi_h,\nabla v_h)
	\]
	for all $v_h \in X_h$. For $\phi\in H^1_0(\Omega)$, we have the following estimate \cite{J81},
	\[
	\| \pi_h \phi - \phi \| + h \| \nabla (\pi_h\phi - \phi) \| \le C h^{k+1} | \phi |_{k+1}.
	\]

\begin{theorem}
	Let $c\in L^\infty(0,\infty; H^{k+1})$ denote the true solution to the fluid transport equation with given $f\in L^\infty(0,\infty; L^2)$, initial condition $c^0\in L^2(\Omega)$, and $c_t, c_{tt}, c_{ttt} \in L^\infty(0, \infty; L^2)$. 
	Then for any $H$ and any $\mu\ge 0$ and $\Delta t>0$, the difference between the DA solution and the true solution satisfies, for all $n$, 
	\begin{align*}
		\|c^n - c_h^n\|^2 \leq \left( \frac{1}{1 + \lambda \dt}   \right)^n \|c_0 - c_h^0\|^2 + \frac{R}{\lambda }, 
	\end{align*}
	where $R = C \epsilon^{-1}h^{2k+2} + C\epsilon^{-1}\dt^4 + C\mu(h^{2k+2}+  H^{2}h^{2k}) + C \mu^{-1} h^{2k+2}$ and $\lambda = \min\left\{  2 \dt^{-1}, \frac{\epsilon C_P^{-2}{\color{black}C_l^{2}}}{4}   \right\}$.
	
	Furthermore, if $\mu \leq \frac{\epsilon}{CH^2}$, then we can take $\lambda = \min\left\{  2 \dt^{-1}, \frac{((\epsilon - C\mu H^2) C_P^{-2} + \mu ){\color{black}C_l^{2}}}{4}   \right\}$.
	\label{ftconv}
\end{theorem}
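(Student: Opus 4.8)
The plan is to run a standard finite-element error analysis, decomposing the error through the $H^1_0$ (Ritz) projection $\pi_h$ and extracting the exponential-in-time damping from the dissipation and the nudging term. First I would write $c^n - c_h^n = (c^n - \pi_h c^n) - (c_h^n - \pi_h c^n) =: \eta^n - \phi_h^n$, with $\phi_h^n\in X_h$. The true solution satisfies the weak transport equation at $t^{n+1}$; replacing $c_t(t^{n+1})$ by the BDF2 quotient $\tfrac{1}{2\dt}(3c^{n+1}-4c^n+c^{n-1})$ introduces a consistency error $\tau^{n+1}$ that Taylor expansion bounds by $\|\tau^{n+1}\|\le C\dt^2$ (using $c_{tt},c_{ttt}\in L^\infty(0,\infty;L^2)$), and since $\tilde P_{L^2}^H(c^{n+1}-c(t^{n+1}))=0$ the true solution also satisfies the nudged weak form. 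Subtracting Algorithm \ref{ftda} yields an error equation governing $e^n=\eta^n-\phi_h^n$.

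Next I would test the error equation with $\chi_h=\phi_h^{n+1}$. Two terms vanish by design: the convective self-term $(U\cdot\nabla\phi_h^{n+1},\phi_h^{n+1})=0$ since $U$ is divergence free, and the diffusive cross-term $\epsilon(\nabla\eta^{n+1},\nabla\phi_h^{n+1})=0$ by the defining orthogonality of $\pi_h$. Applying the $G$-identity \eqref{Gidentity} to the discrete time-derivative term produces the telescoping quantity $\tfrac{1}{2\dt}(\|[\phi_h^{n+1};\phi_h^n]\|_G^2-\|[\phi_h^{n};\phi_h^{n-1}]\|_G^2)$, leaving on the left the coercive quantities $\epsilon\|\nabla\phi_h^{n+1}\|^2$ and $\mu\|\tilde P_{L^2}^H\phi_h^{n+1}\|^2$. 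The four right-hand terms in $\eta$ and $\tau$ I would bound by Cauchy--Schwarz and Young as follows. The convective term, after integration by parts, is $-(U\cdot\nabla\phi_h^{n+1},\eta^{n+1})$, absorbed into $\tfrac{\epsilon}{4}\|\nabla\phi_h^{n+1}\|^2$ and contributing $C\epsilon^{-1}h^{2k+2}$; the consistency term $(\tau^{n+1},\phi_h^{n+1})$ is likewise absorbed via Poincar\'e, giving $C\epsilon^{-1}\dt^4$; the nudging term $\mu(\tilde P_{L^2}^H\eta^{n+1},\tilde P_{L^2}^H\phi_h^{n+1})$ is absorbed into $\tfrac{\mu}{2}\|\tilde P_{L^2}^H\phi_h^{n+1}\|^2$ with the remainder bounded through \eqref{interp2} by $C\mu(h^{2k+2}+H^2h^{2k})$; and the projection time-derivative term $(\zeta^{n+1},\phi_h^{n+1})$, where $\zeta^{n+1}=\tfrac{3\eta^{n+1}-4\eta^n+\eta^{n-1}}{2\dt}$ satisfies $\|\zeta^{n+1}\|\le C h^{k+1}$, is split using \eqref{interp1} as $\|\zeta^{n+1}\|(\|\tilde P_{L^2}^H\phi_h^{n+1}\|+CH\|\nabla\phi_h^{n+1}\|)$, the first piece absorbed into the leftover nudging term (giving $C\mu^{-1}h^{2k+2}$) and the second into the diffusion (giving $C\epsilon^{-1}h^{2k+2}$ since $H\le O(1)$). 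These four contributions constitute $R$.

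The core step is the coercivity estimate converting the left-hand dissipation into the factor $(1+\lambda\dt)^{-1}$. In the general case I would discard the nonnegative nudging term, add $\tfrac{\epsilon\dt}{4}\|\nabla\phi_h^n\|^2$ to both sides exactly as in Lemma \ref{ftstab}, and use $\|\nabla\phi_h^{n+1}\|^2+\|\nabla\phi_h^n\|^2\ge C_P^{-2}C_l^2\|[\phi_h^{n+1};\phi_h^n]\|_G^2$ to reach a recursion of the form \eqref{sequence} with $r=1+\lambda\dt$ and $\lambda=\min\{2\dt^{-1},\epsilon C_P^{-2}C_l^2/4\}$. For the refined rate I would instead retain the nudging term and write $\epsilon=(\epsilon-C\mu H^2)+C\mu H^2$, using \eqref{interp1} so that $C\mu H^2\|\nabla\phi_h^{n+1}\|^2\ge\mu\|\phi_h^{n+1}-\tilde P_{L^2}^H\phi_h^{n+1}\|^2$; combining with $\mu\|\tilde P_{L^2}^H\phi_h^{n+1}\|^2$ then bounds below a multiple of $\mu\|\phi_h^{n+1}\|^2$, while the residual $(\epsilon-C\mu H^2)\|\nabla\phi_h^{n+1}\|^2$ is controlled via Poincar\'e, yielding under $\mu\le\epsilon/(CH^2)$ the improved $\lambda=\min\{2\dt^{-1},((\epsilon-C\mu H^2)C_P^{-2}+\mu)C_l^2/4\}$. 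Finally I would apply Lemma \ref{geoseries} to the scalar recursion, then convert back from $\phi_h$ to the full error through $\|c^n-c_h^n\|\le\|\eta^n\|+\|\phi_h^n\|$ with $\|\eta^n\|\le Ch^{k+1}$, folding the residual interpolation errors into $R/\lambda$ and the initial discrepancy into $\|c_0-c_h^0\|^2$.

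I expect the main obstacle to be the simultaneous balancing in the refined coercivity step: invoking \eqref{interp1} forces a $C\mu H^2\|\nabla\phi_h^{n+1}\|^2$ deficit that must be paid out of the diffusion budget while still leaving enough of $\epsilon\|\nabla\phi_h^{n+1}\|^2$ both to absorb the convective and consistency errors and to supply the geometric decay. Tracking exactly which fractions of $\epsilon\|\nabla\phi_h^{n+1}\|^2$ and $\mu\|\tilde P_{L^2}^H\phi_h^{n+1}\|^2$ are spent on absorption versus on the $(1+\lambda\dt)^{-1}$ factor, and ensuring the resulting $\mu$- and $\epsilon$-dependence of $R$ comes out as stated, is where the bookkeeping is most delicate.
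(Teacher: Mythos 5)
Your proposal follows essentially the same route as the paper's proof: the same Ritz-projection splitting $e^n=\eta^n+\phi_h^n$, BDF2 consistency via Taylor's theorem, testing with $\phi_h^{n+1}$ under the $G$-stability identity \eqref{Gidentity}, the same term-by-term Young absorptions using \eqref{interp1}--\eqref{interp2}, the same two-case treatment of the nudging term (dropped for the unconditional rate; converted into $\mu\|\phi_h^{n+1}\|^2$ at the cost of a $C\mu H^2\|\nabla\phi_h^{n+1}\|^2$ deficit for the refined rate under $\mu\le\epsilon/(CH^2)$), and Lemma \ref{geoseries} to conclude. The only deviations are cosmetic bookkeeping: the paper produces the $\mu\|\phi_h^{n+1}\|^2$ term by adding and subtracting $\phi_h^{n+1}$ inside the nudging inner product rather than by splitting the diffusion budget as you do, and in the unconditional case it absorbs the $\eta$ time-difference term entirely into the diffusion (yielding $C\epsilon^{-1}h^{2k+2}$ in place of your $C\mu^{-1}h^{2k+2}$, which is what keeps that part of the statement meaningful at $\mu=0$).
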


\begin{remark}
It is no surprise with fluid transport that the DA algorithm will converge to the true solution (up to discretization error), even when $\mu=0$.  This is because the initial condition will eventually diffuse away, leaving the forcing (and boundary conditions) to drive the system.  Hence if the algorithm has the correct forcing and boundary conditions, then it must converge to the true solution even without nudging, as we prove below.  However, if $H$ and $\mu$ satisfy the restriction, we have proven that the DA nudging can significantly speed up the convergence to the true solution, and we observe exactly this phenomena in our numerical tests.
\end{remark}

\begin{proof}
	After applying Taylor's theorem, the true solution satisfies, for all $\chi_h \in X_h$,
	\begin{align}
		\frac{1}{2\dt} (3c^{n+1} - 4c^n + c^{n-1}, \chi_h) + (U \cdot \nabla c^{n+1}, \chi_h) + &\epsilon(\nabla c^{n+1}, \nabla \chi_h)  \nonumber 
		\\ &= (f^{n+1}, \chi_h) + \frac{\dt^2}{3}(c_{ttt}(t^*), \chi_h), \label{transporttrue22}
	\end{align}
	where $t^* \in [t^{n-1}, t^{n+1}]$. Subtracting \eqref{ftda} and \eqref{transporttrue22} and letting $e^n = c^n - c_h^n$ yields the difference equation
	\begin{align}
	\frac{1}{2\dt} (3e^{n+1} - 4e^n + e^{n-1}, \chi_h)  + \epsilon(\nabla e^{n+1}, &  \nabla \chi_h) + \mu (\tilde{P}_{L^2}^He^{n+1}, \tilde{P}_{L^2}^H \chi_h)     \nonumber
	\\ & = - (U \cdot \nabla e^{n+1}, \chi_h)- \frac{\dt^2}{3}(c_{ttt}(t^*), \chi_h). \label{ftdiff}
	\end{align}
	 Now decompose the error by adding and subtracting $\pi_h(c^{n})$ to $e^n$, denote $\eta^n = c^n - \pi_h(c^{n})$ and $\phi_h^n = \pi_h(c^{n}) - c_h^n$ so that $e^n = \eta^n + \phi_h^n$, with $\phi_h^n \in X_h$. Choose $\chi_h = \phi_h^{n+1}$ and use the $G$-stability framework to write the difference equation as
	\begin{align}
	\frac{1}{2\dt} \bigg(  \|[\phi_h^{n+1};& \phi_h^n]\|_G^2 - \|[\phi_h^n; \phi_h^{n-1}]\|_G^2 \bigg)  + \epsilon\|\nabla \phi_h^{n+1}\|^2 + \mu \|\tilde{P}_{L^2}^H \phi_h^{n+1}\|^2 \nonumber
	\\ & \leq |(U \cdot \nabla \eta^{n+1}, \phi_h^{n+1})| + \frac{\dt^2}{3}|(c_{ttt}(t^*), \phi_h^{n+1})|   + \mu |(\tilde{P}_{L^2}^H\eta^{n+1}, \tilde{P}_{L^2}^H\phi_h^{n+1})| \nonumber 
	\\ & \,\,\,\, +  \left| \left( \frac{\eta^{n+1} - 4\eta^n + \eta^{n-1}}{2\dt}, \phi_h^{n+1}  \right)\right|.
	\label{ftdiff2}
	\end{align}
	Except for the nudging term, all right hand side terms above are bounded using standard inequalities, as in \cite{GR86, laytonbook,T77,erv031}: 
	\begin{align*}
	|(U \cdot \nabla \eta^{n+1}, \phi_h^{n+1})| & \leq C \epsilon^{-1}  \| \eta^{n+1}\|^2 + \frac{\epsilon}{8}  \|\nabla \phi_h^{n+1}\|^2 , 
	\\ \frac{\dt^2}{3}|(c_{ttt}(t^*), \phi_h^{n+1})| 
	& \leq C\epsilon^{-1} \dt^4 \|c_{ttt}\|^2_{L^\infty(0,\infty;L^2)} + \frac{\epsilon}{8}\|\nabla \phi_h^{n+1}\|^2,
	\\ \frac{1}{2\dt} |(3\eta^{n+1} - 4 \eta^n + \eta^{n-1}, \phi_h^{n+1})| & \leq C\epsilon^{-1}\left(\|\eta_t\|^2_{L^\infty(0,\infty , L^2)} + \int_{t^{n-1}}^{t^{n+1}} \|\eta_{tt}\|^2 dt\right)
	+ \frac{\epsilon}{4}\|\nabla \phi_h^{n+1}\|^2.
	\end{align*}
	For the nudging term on the right hand side, we first apply Cauchy-Schwarz and Young's inequalities, and then inequality \eqref{interp2}, which yields
	\begin{align*}
	\mu |(\tilde{P}_{L^2}^H\eta^{n+1}, \tilde{P}_{L^2}^H\phi_h^{n+1})| & \leq \mu \|\tilde{P}_{L^2}^H\eta^{n+1} \|^2 + \frac{\mu}{4}\|\tilde{P}_{L^2}^H\phi_h^{n+1}\|^2
	\\ & \leq C\mu (\|\eta^{n+1}\|^2 + H^2\|\nabla \eta^{n+1}\|^2 ) + \frac{\mu}{4}\|\tilde{P}_{L^2}^H\phi_h^{n+1}\|^2.
	\end{align*}
	Using these bounds in \eqref{ftdiff2}, dropping the nudging term on the left hand side, and multiplying by $2\dt$ gives
		\begin{align}
	  \|[{\color{black}\phi_h^{n+1}}&; \phi_h^n]\|_G^2  + \epsilon\dt \|\nabla \phi_h^{n+1}\|^2  \nonumber
	 \\ & \leq \|[\phi_h^n; \phi_h^{n-1}]\|_G^2 + C \epsilon^{-1} \dt \| \eta^{n+1}\|^2 + C\epsilon^{-1} \dt^5 \|c_{ttt}\|^2_{L^\infty(0,\infty;L^2)} \nonumber 
	 \\ & \,\,\,\,
	  + C\mu \dt (\|\eta^{n+1}\|^2 + H^2\|\nabla \eta^{n+1}\|^2 ) + C \epsilon^{-1} \dt \left(  \|\eta_t\|^2_{L^\infty(0,\infty; L^2)} + \int_{t^{n-1}}^{t^{n+1}} \|\eta_{tt}\|^2 dt  \right) \nonumber 
	  \\ & \leq  \|[\phi_h^n; \phi_h^{n-1}]\|_G^2 + C \dt \epsilon^{-1}h^{2k + 2} + C\dt^5 + C \mu \dt(h^{2k+2} + H^2h^{2k}) + C\epsilon^{-1}\dt h^{2k+2}.
	\label{ftdiff3}
	\end{align}
	Let $R := C  \epsilon^{-1}h^{2k + 2} + C\dt^4 + C \mu (h^{2k+2} + H^2h^{2k}) + C\epsilon^{-1} h^{2k+2}$ and after applying approximation properties of $\pi_h$, apply Poincar\'e's inequality on the left hand side to reveal
	\begin{align*}
		 \|[\phi_h^{n+1}; \phi_h^n]\|_G^2  + \epsilon C_P^{-2}\dt \|\phi_h^{n+1}\|^2  \leq \|[\phi_h^n; \phi_h^{n-1}]\|_G^2 + \dt R.
	\end{align*}
	We can now proceed as in the long time stability proof above to get 
	\begin{align*}
	\|[\phi_h^{n+1}; \phi_h^n]\|_G^2  + \epsilon\dt \|\phi_h^{n+1}\|^2  \leq \|[\phi_h^1; \phi_h^{0}]\|_G^2 \left(\frac{1}{1 + \lambda \dt }\right)^{n+1} + \frac{R}{\lambda}.
	\end{align*}
	$G$-norm equivalence and triangle inequality complete the first part of proof, without any restriction on $H$ or $\mu$.
	
We will now show that with an added restriction on $\mu$, we obtain a faster convergence rate to the true solution. Starting back at \eqref{ftdiff}, add and subtract $\phi_h^{n+1}$ in both components of the nudging inner product on the left hand side to obtain 
		\begin{align}
	\frac{1}{2\dt} \bigg(  \|[{\color{black}\phi_h^{n+1}}; \phi_h^n]\|_G^2& - \|[\phi_h^n; \phi_h^{n-1}]\|_G^2 \bigg)  + \epsilon\|\nabla \phi_h^{n+1}\|^2 + \mu \|\phi_h^{n+1}\|^2 \nonumber
	\\ & \leq |(U \cdot \nabla \eta^{n+1}, \phi_h^{n+1})| + \frac{\dt^2}{3}|(c_{ttt}(t^*), \phi_h^{n+1})|   \nonumber 
	+ \mu |(\tilde{P}_{L^2}^H\eta^{n+1}, \tilde{P}_{L^2}^H\phi_h^{n+1})| 
	\\ & \,\,\,\,  + \mu \|\tilde{P}_{L^2}^H \phi_h^{n+1} - \phi_h^{n+1}\|^2 + 2 \mu |(\tilde{P}_{L^2}^H \phi_h^{n+1} - \phi_h^{n+1}, \phi_h^{n+1})|    \nonumber
	\\ & \,\,\,\, + \frac{1}{2\dt} |(3\eta^{n+1} - 4 \eta^n + \eta^{n-1}, \phi_h^{n+1})|.
	\label{aftdiff2}
	\end{align}
	This then leads to two additional right hand side terms to bound, and we have to adjust the bound on the original right hand side nudging term by applying inequality \eqref{interp2}.
	We upper bound the first one with inequality \eqref{interp1}, yielding
	\begin{align*}
	\mu \|\tilde{P}_{L^2}^H \phi_h^{n+1} - \phi_h^{n+1}\|^2 \leq C\mu H^2 \| \nabla \phi_h^{n+1}\|^2. 
	\end{align*}
	In a similar manner, we start with Cauchy-Schwarz on the last nudging term, then apply inequality \eqref{interp1} and Poincar\'e's inequality to obtain 
	\begin{align*}
	2 \mu |(\tilde{P}_{L^2}^H \phi_h^{n+1} - \phi_h^{n+1}, \phi_h^{n+1})| & \leq C \mu \|\tilde{P}_{L^2}^H \phi_h^{n+1} - \phi_h^{n+1}\| \|\phi_h^{n+1}\|
	\\ & \leq C\mu H^2 \| \nabla \phi_h^{n+1}\|^2 + \frac{\mu}{4}\|\phi_h^{n+1}\|^2
	\end{align*}
	Replace the right hand side of \eqref{aftdiff2} with the bounds above, reduce, and multiply by $2\dt$ to obtain
	\begin{align*}
	\|[{\color{black}\phi_h^{n+1}}; & \phi_h^n]\|_G^2  + \dt (\epsilon - C\mu H^2) \|\nabla \phi_h^{n+1}\|^2 + \mu\dt \|\phi_h^{n+1}\|^2 \nonumber
	\\ & \leq \|[\phi_h^n; \phi_h^{n-1}]\|_G^2 + C \epsilon^{-1} \dt \| \eta^{n+1}\|^2 + C\epsilon^{-1} \dt^5 \|c_{ttt}\|^2_{L^\infty(0,\infty;L^2)} \nonumber 
	\\ & \,\,\,\, + C\mu \dt (\|\eta^{n+1}\|^2 + H^2 \|\nabla \eta^{n+1}\|^2 ) + C\mu^{-1} \dt \|\eta_t\|^2_{L^\infty(0,\infty , L^2)} + C \mu^{-1} \dt \int_{t^{n-1}}^{t^{n+1}} \|\eta_{tt}\|^2 dt\nonumber 
	\\ & \leq  \|[\phi_h^n; \phi_h^{n-1}]\|_G^2 + C \dt \epsilon^{-1}h^{2k+2} + C\epsilon^{-1}\dt^5 + C\mu\dt(h^{2k+2}+  H^{2}h^{2k}) + C \mu^{-1}\dt h^{2k+2} .
	\end{align*}
	This then leads to the restriction on $\mu$, 
	$$
	\epsilon - C\mu H^2 \geq 0 \,\, \iff \,\, \mu \leq \frac{\epsilon}{CH^2}.
	$$
	With $R := C  \epsilon^{-1}h^{2k+2} + C\epsilon^{-1}\dt^4 + C\mu(h^{2k+2}+  H^{2}h^{2k}) + C \mu^{-1}h^{2k+2} $, we can complete the proof using equivalent arguments as above. 
\end{proof}

\subsection{Numerical Tests}

We now give results for numerical tests of Algorithm \ref{ftda}.  We illustrate the predicted convergence rates with respect to the discretization parameters $h$, $H$ and $\Delta t$, the convergence in time for changing $\mu$ and $H$, and also show the method works very well on a more practical test problem.

\subsubsection{Convergence rates}

\begin{figure}[!ht]
\begin{center}
$\tau_h$ (h=1/16) \hspace{1.6in}  $\tau_H$ (H=1/4) \\
\includegraphics[width = .35\textwidth, height=.3\textwidth,viewport=0 0 550 400, clip]{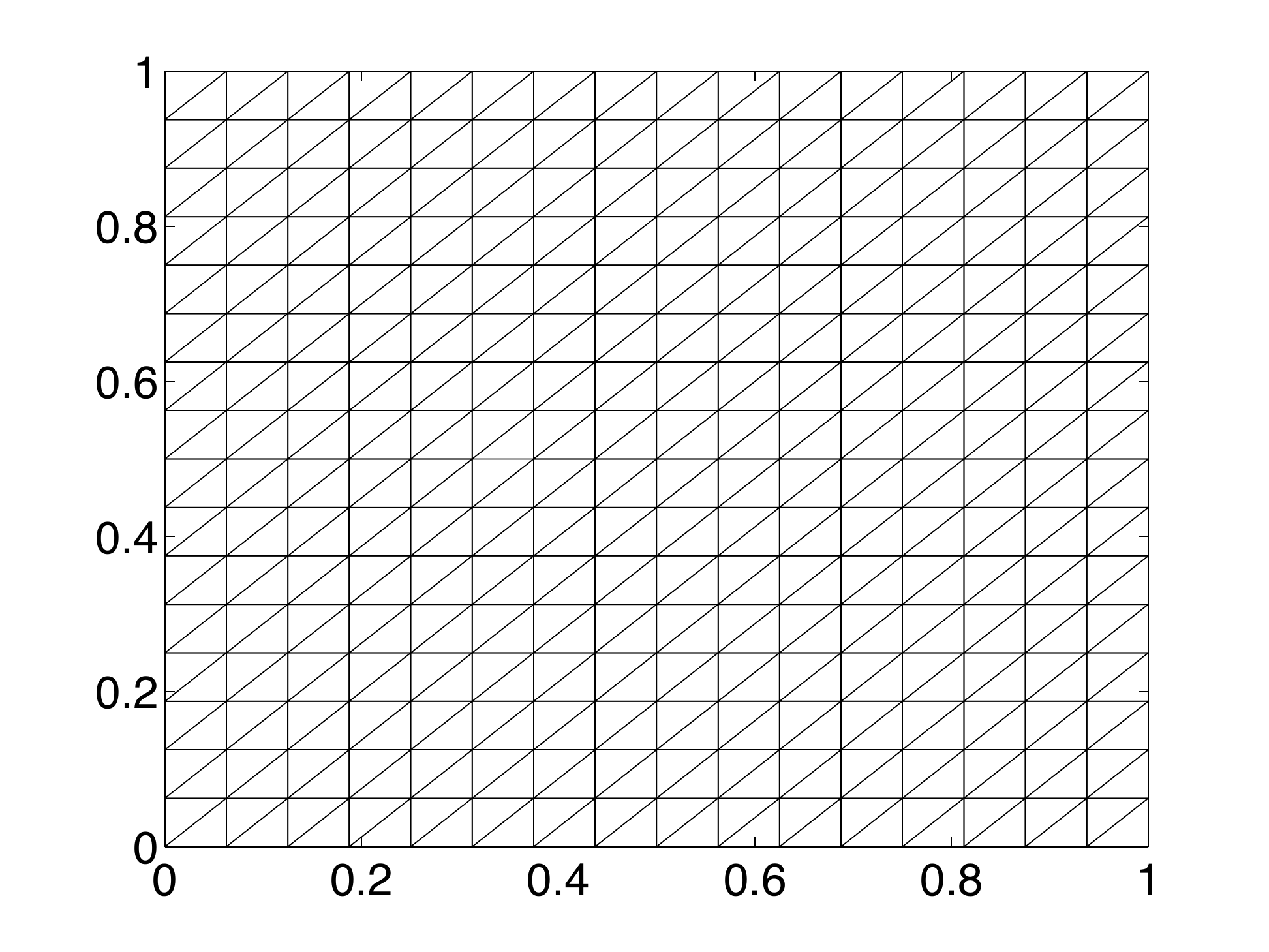} \ \ \ \ \ \ \ \
\includegraphics[width = .35\textwidth, height=.3\textwidth,viewport=0 0 550 400, clip]{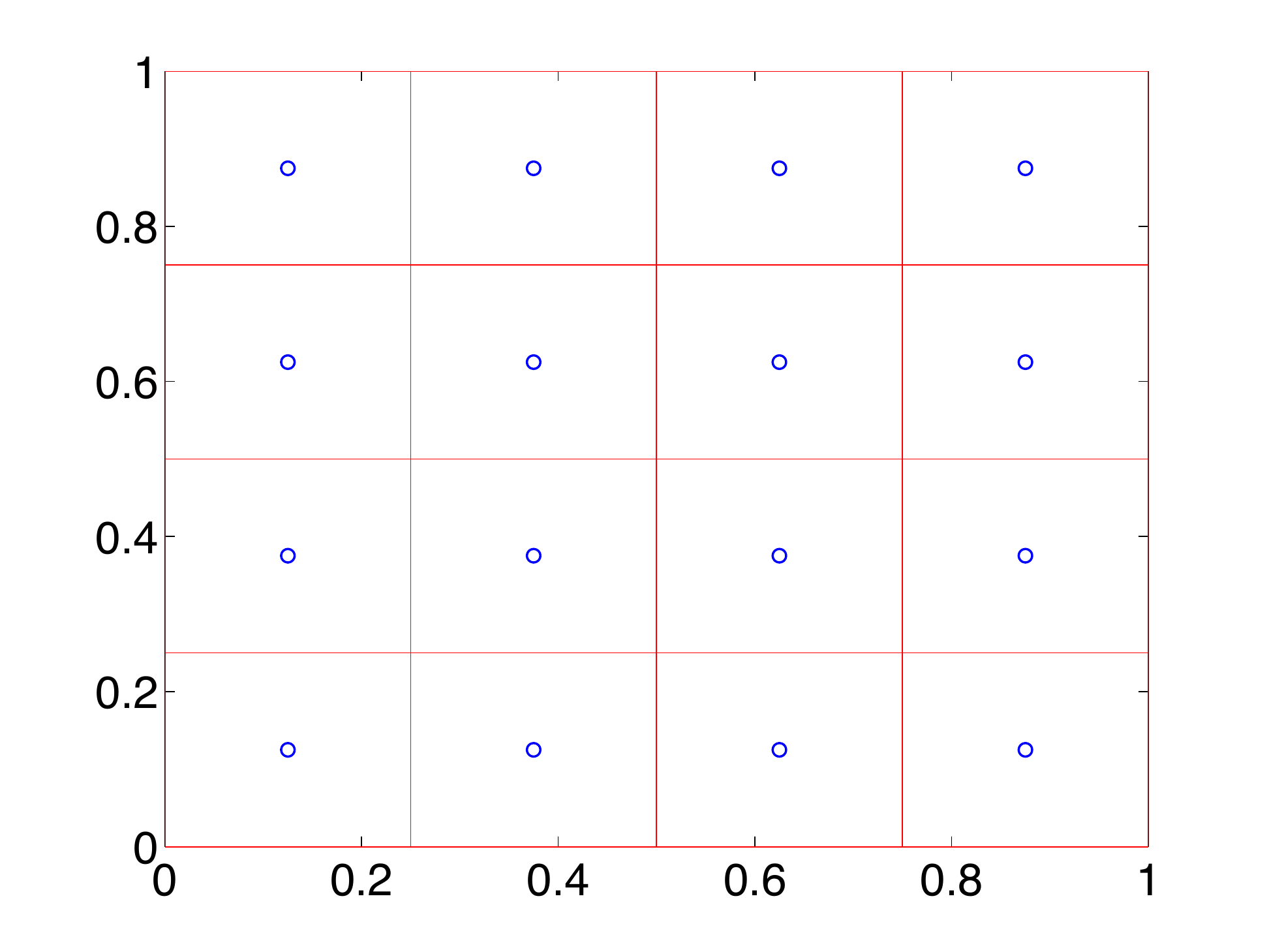}
	\caption{\label{meshes1} Shown above is an example of a fine mesh and associated coarse mesh used in the convergence rate test.}
	\end{center}
\end{figure}

To test the convergence rates with respect to the discretization parameters, we 
select the true solution $c=\sin(x+y+t)$ on $\Omega=(0,1)^2 \times [0,5]$, with transport 
velocity $U=\langle 1,0 \rangle^T$, and $\epsilon=1$.  The forcing $f$ is calculated from this 
chosen solution and \eqref{ft1}.  We compute approximate solutions using Algorithm \ref{ftda} with the calculated $f$, $P_2$ finite elements, Dirichlet boundary conditions enforced nodally to be equal to the true solution, zero initial conditions $c_h^0=c_h^1=0$, $\mu=1$, a uniform triangular mesh $\tau_h$, and a uniform square mesh $\tau_H$, see figure \ref{meshes1}.  Computations are done with varying $h$, $H$, and $\Delta t$, but we tie the discretization parameters together via $h=H/4$ and $\Delta t = Ch^{3/2}$ with $C$=0.9051.  We then successively refine $h$ (and thus $H$ and $\Delta t$ as well), and calculate the $L^2$ norm of the difference to the true solution at $t$=5.  Due to the way the parameters are tied together, we expect from our above theory that $\| c_h^n - c(t^n) \|_{L^2} = O(h^3)$, which is exactly what we observe in table \ref{hconv}.

\begin{table}[h!]
\centering
\begin{tabular}{|c | c | c |}
\hline
$h$ & $\| c_h - c \|_{L^2}$ & Rate \\ \hline
1/8 &    9.1235e-05     &       - \\ \hline
 1/16 &   1.1249e-05  &  3.02 \\ \hline
 1/32 &   1.4136e-06  &  2.99 \\ \hline
 1/64 &   1.7687e-07  &  3.00\\ \hline
\end{tabular}
\caption{\label{hconv} Convergence rates of Algorithm \ref{ftda} to the true solution at $t$=5, for varying $h$, $H=4h$ and $\Delta t = 0.9051 h^{3/2}$.  Third order convergence is observed, as predicted by the theory.}
\end{table}

\subsubsection{Effect of $\mu$ and $H$ on convergence to the true solution as $t\rightarrow \infty$}

\begin{figure}[!ht]
\begin{center}
H=1/4 \hspace{1.5in} H=1/8 \hspace{1.5in} H=1/32 \\
\includegraphics[width = .32\textwidth, height=.28\textwidth,viewport=0 0 530 400, clip]{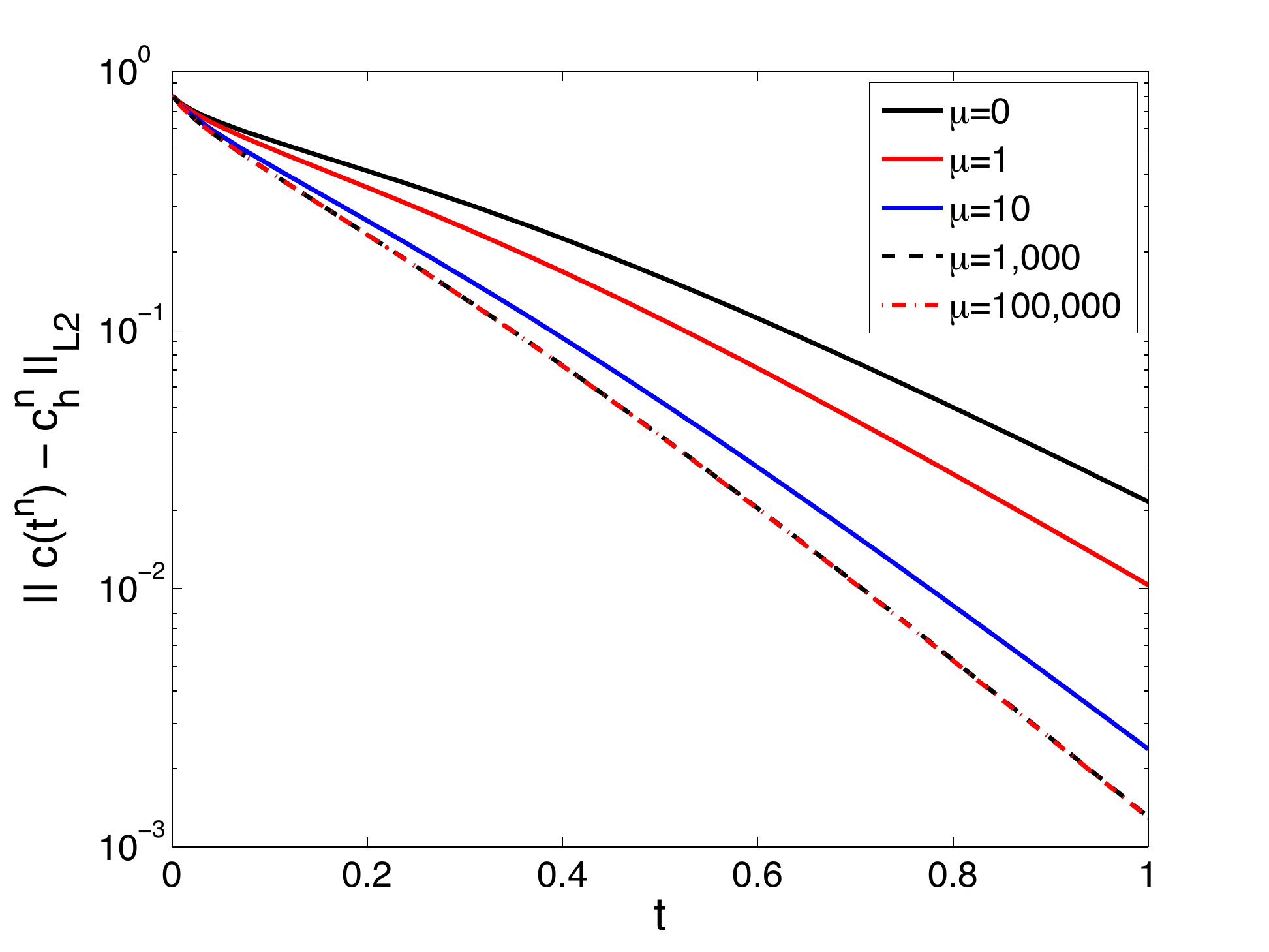} 
\includegraphics[width = .32\textwidth, height=.28\textwidth,viewport=0 0 530 400, clip]{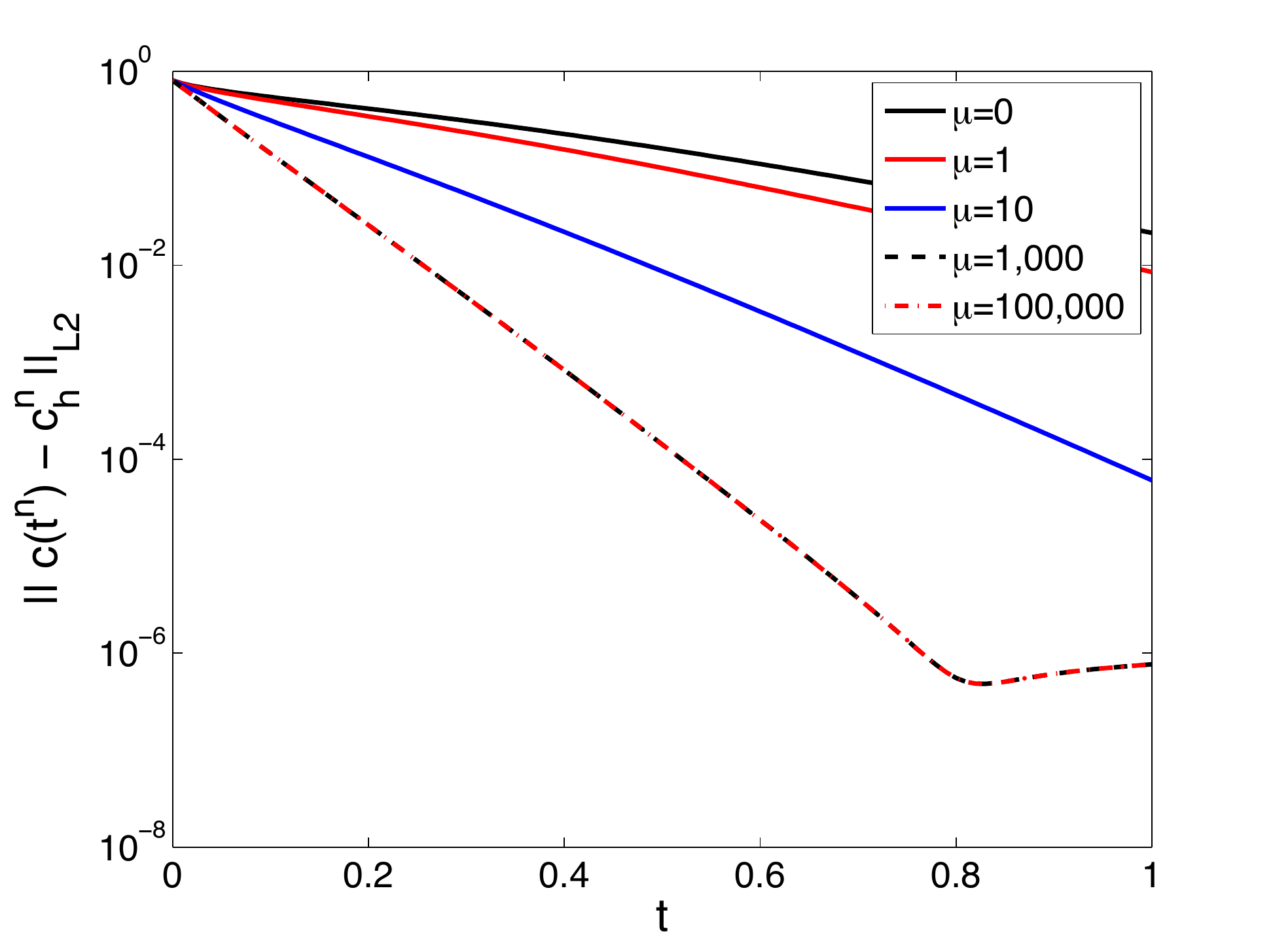}
\includegraphics[width = .32\textwidth, height=.28\textwidth,viewport=0 0 530 400, clip]{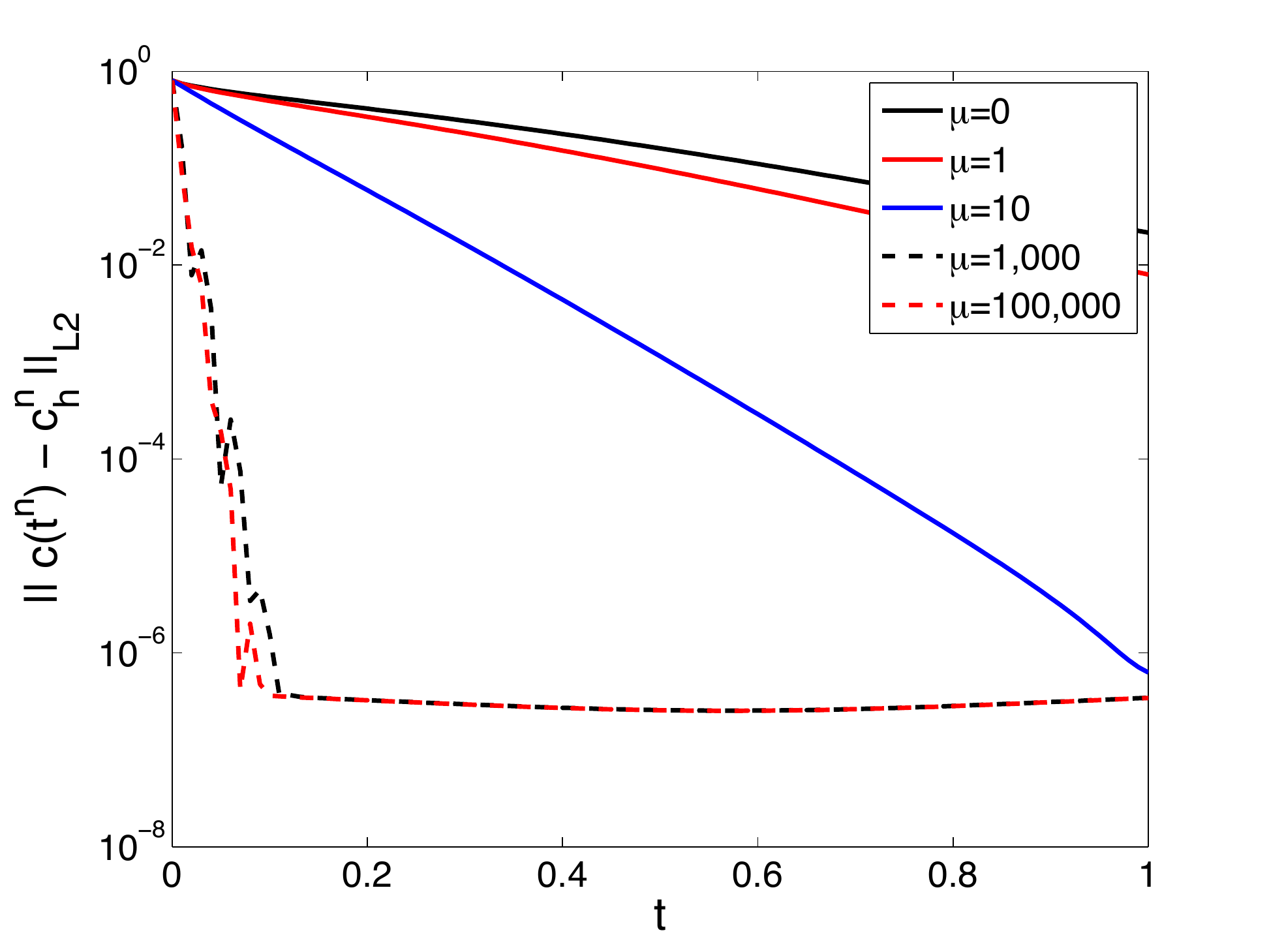} 
	\caption{\label{muvary} Shown above is the $L^2$ difference to the true solution versus time, for DA simulations of fluid transport  with varying $\mu$ and $H$.}
	\end{center}
\end{figure}

Our theory predicts that even with $\mu=0$, the DA solution will converge to the true solution (up to discretization error), exponentially fast in time.  However, we also prove that under restrictions that $\mu H^2$ is sufficiently small, the {\color{black} speed of} convergence will be increased as $\mu$ increases (until it becomes so large that $\mu H^2$ is no longer sufficiently small).  We test this theory now.  

Repeating the test above for convergence rates, but now with $\epsilon=0.01$, the fine mesh fixed to be a uniform triangular mesh with $h=1/32$, and time step size $\Delta t=0.01$ fixed.  We then run Algorithm \ref{ftda} with varying $H$ and $\mu$, calculating for each run the $L^2$ error versus time.
Results of these tests are shown in  figure \ref{muvary}.  We observe that for the largest $H=1/4$, convergence is only slightly increased with $\mu=1$ over $\mu=0$, and even very large $\mu$ does not produce significant speed up in the convergence.  However, as $H$ is decreased, we observe that large $\mu$ has a much greater impact; in particular, for $H=$1/32, and $\mu\ge 1,000$, convergence to the true solution is almost immediate.

\subsubsection{DA prediction of contaminant transport}

\begin{figure}[!ht]
\begin{center}
Fine mesh $\tau_h$\\
\includegraphics[width = .7\textwidth, height=.15\textwidth,viewport=50 0 650 300, clip]{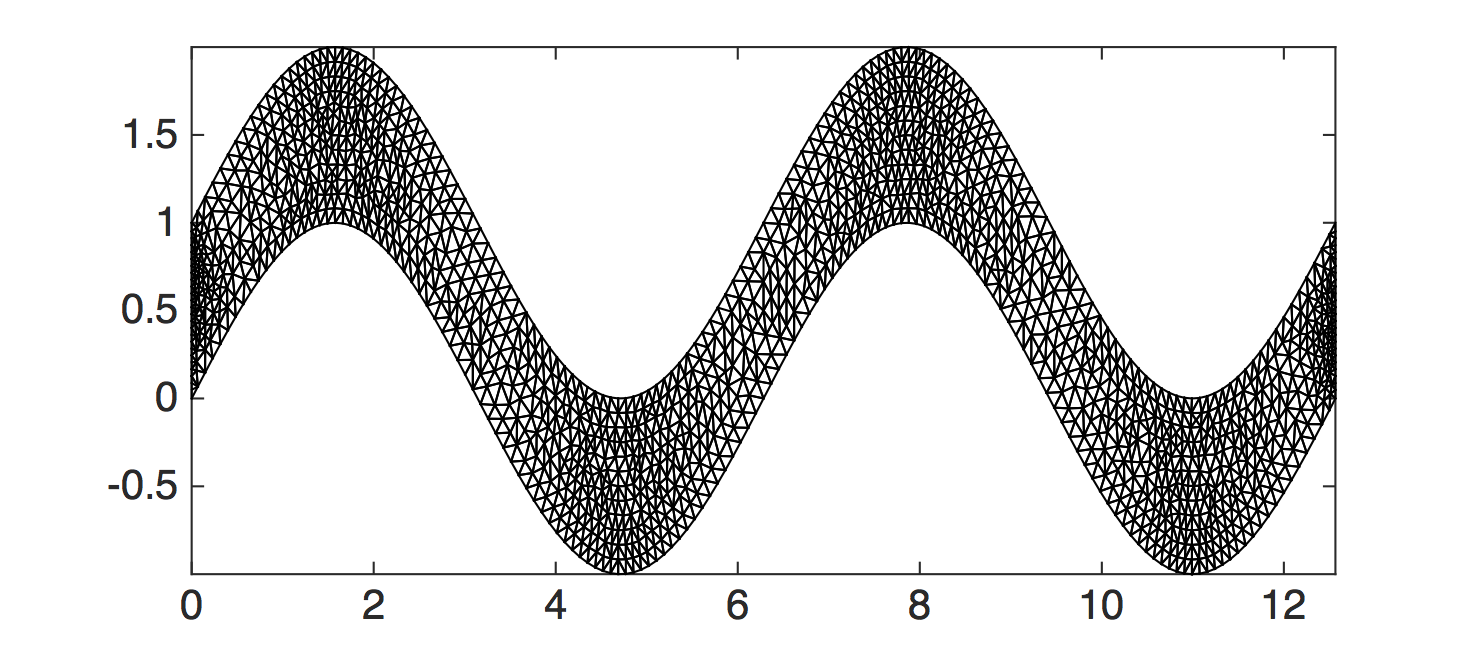}\\
Coarse mesh $\tau_H$ is the intersection of $\Omega$ and the rectangular mesh \\ \ \ \ \ \ \ \
\includegraphics[width = .65\textwidth, height=.15\textwidth,viewport=90 30 650 300, clip]{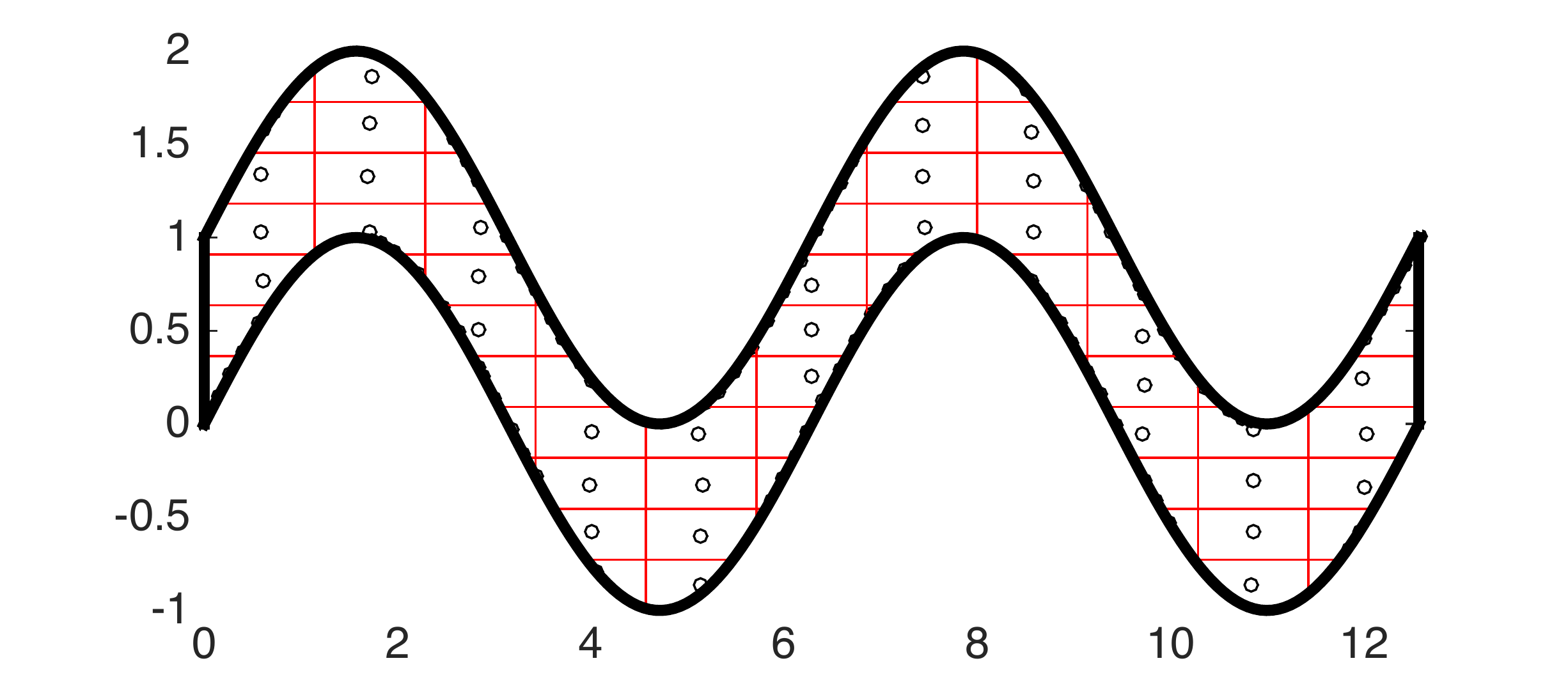}
	\caption{\label{meshes} Shown above are the fine and coarse meshes used in the fluid transport numerical
	test.  The coarse mesh nodes $x_{k_j}$ are shown as black circles.}
	\end{center}
\end{figure}

Our last test for Algorithm \ref{ftda} is on the following test problem, which is intended to simulate
contaminant transport in a river.  The domain is constructed from the curves $y=\sin(x)$ and $y=1+\sin(x)$ as lower and
upper boundaries, with $x=0$ and $x=4\pi$ as left and right boundaries.  Using the mesh $\tau_h$ shown in figure
\ref{meshes}, we use $((P_2)^2,P_1)$ Taylor-Hood elements (which gives a total of 13,928 total degrees of freedom (dof)) to compute a solution to the Stokes equations with viscosity $0.01$ and zero forcing, using no-slip boundary conditions on the top and bottom boundaries, a plug inflow of $u_{in}=3$, and a zero-traction outflow enforced with the do-nothing condition (see e.g. \cite{laytonbook} for more details on FE implementation of Stokes equations).  We take our transport velocity $U$ to be the velocity solution of this discrete Stokes problem.  

\begin{figure}[!ht]
\begin{center}
\includegraphics[width = .45\textwidth, height=.28\textwidth,viewport=0 0 650 360, clip]{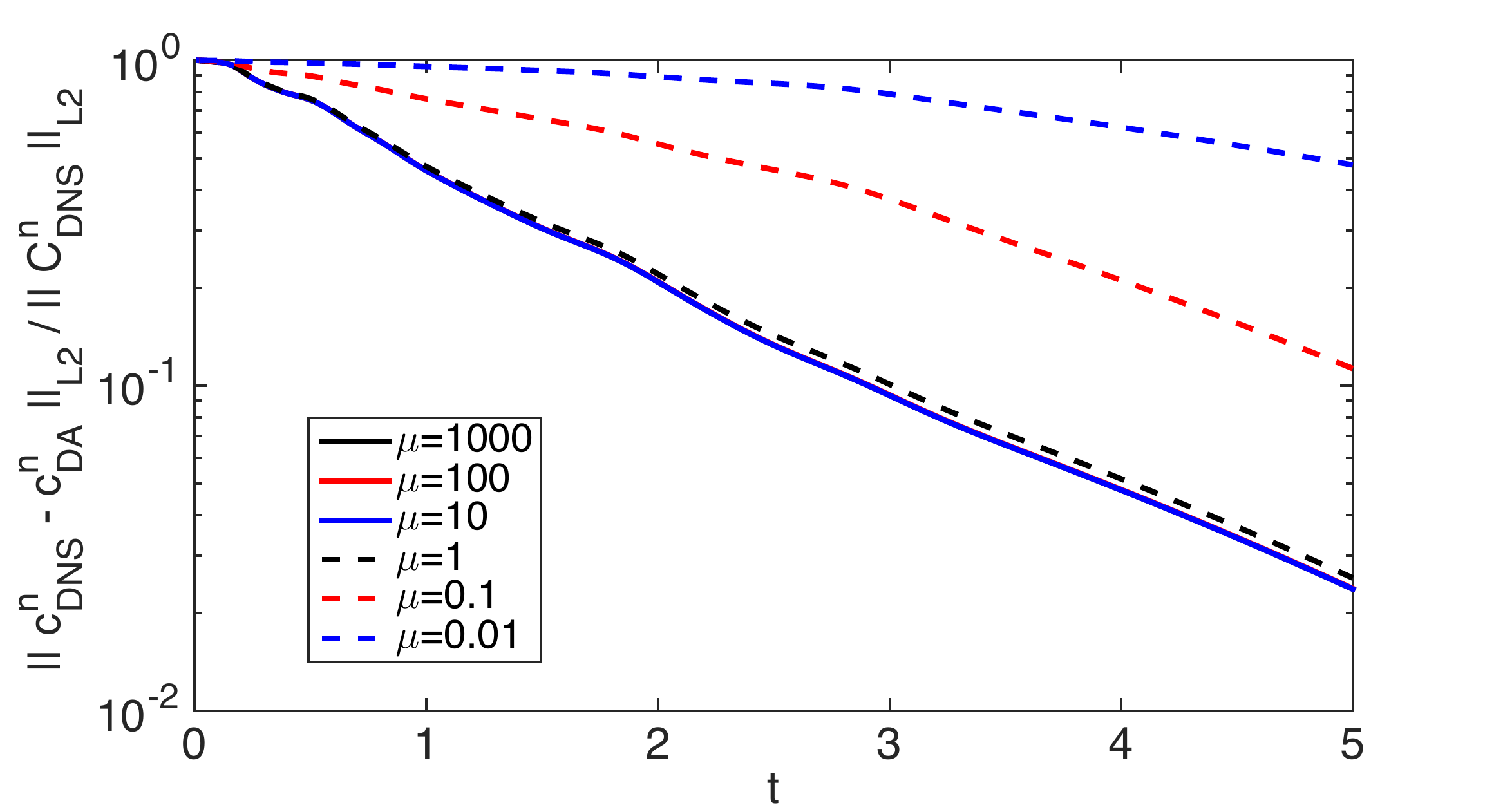}\\
	\caption{\label{conv} Shown above are the fine and coarse meshes used in the fluid transport numerical
	test.  The coarse mesh is created by intersecting the rectangular grid with the domain, and for each coarse mesh
	element $E^H_j$, we also plot the fine mesh node $x_{k_j}$ that is closest to the center of $E_j^H$.}
	\end{center}
\end{figure}

We next solve \eqref{ft1}-\eqref{ft2} equipped with a boundary condition of 0 contaminant at the inflow ($c_{in}=0$), the transport velocity $U$ from the discrete Stokes problem above, $\epsilon=0.01$, and 
zero Neumann conditions  $\nabla c \cdot n =0$ at all other boundaries.  For the initial condition, there is zero contaminant except for two `blobs', which are represented with $c=3$ inside the circles centered at $(1,1.5)$ and $(5,-0.5)$, with radius $0.1$ (see figure \ref{dacontour}, top right plot).  We compute a direct numerical simulation (DNS) for the concentration $c$ using Algorithm \ref{ftda} with no data assimilation ($\mu=0$), $P_2$ on the same fine mesh as for the Stokes FE problem, and $\Delta t=0.02$.  Plots of the DNS solution are shown in figure \ref{dacontour} on the right side.

\begin{figure}[!ht]
\begin{center}
DA (t=0.0) \hspace{2in} DNS (t=0.0)\\
\includegraphics[width = .48\textwidth, height=.13\textwidth,viewport=50 20 650 300, clip]{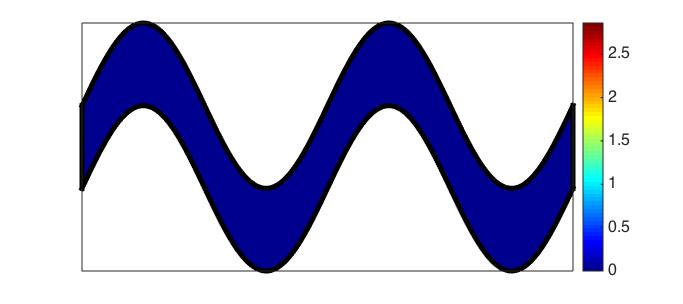}
\includegraphics[width = .48\textwidth, height=.13\textwidth,viewport=50 20 650 300, clip]{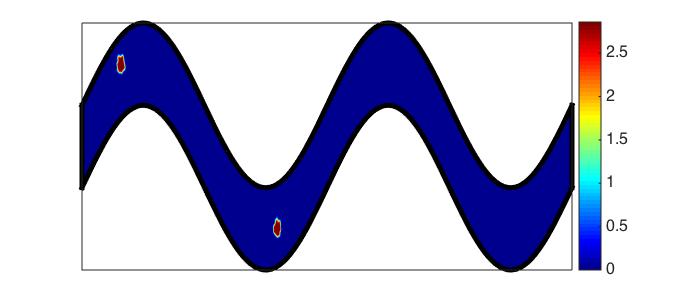}
DA (t=0.5) \hspace{2in} DNS (t=0.5)\\
\includegraphics[width = .48\textwidth, height=.13\textwidth,viewport=50 20 650 300, clip]{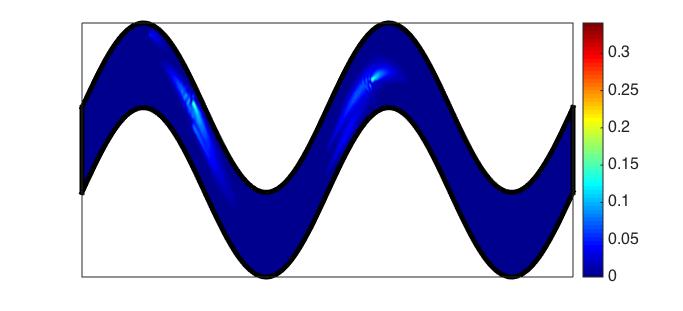}
\includegraphics[width = .48\textwidth, height=.13\textwidth,viewport=50 20 650 300, clip]{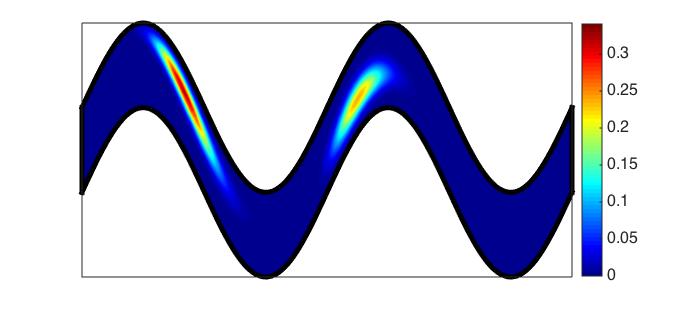}
DA (t=1.0) \hspace{2in} DNS (t=1.0)\\
\includegraphics[width = .48\textwidth, height=.13\textwidth,viewport=50 20 650 300, clip]{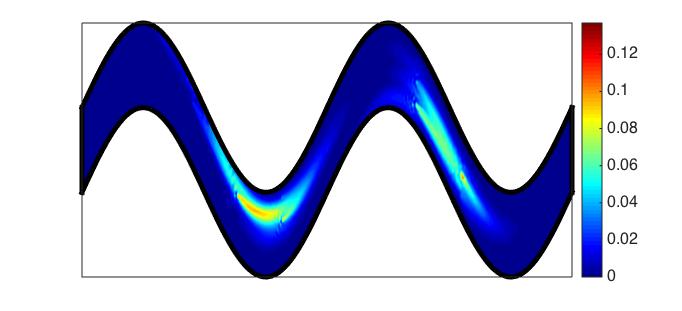}
\includegraphics[width = .48\textwidth, height=.13\textwidth,viewport=50 20 650 300, clip]{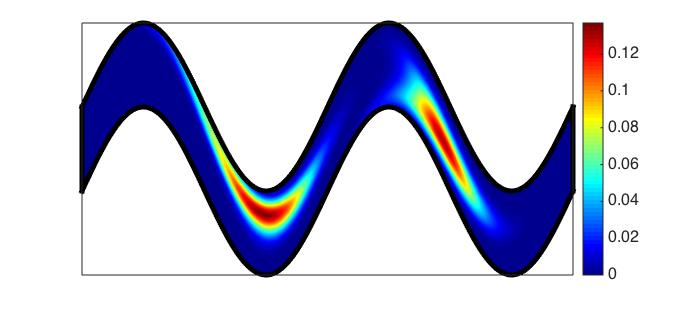}
DA (t=2.5) \hspace{2in} DNS (t=2.5)\\
\includegraphics[width = .48\textwidth, height=.13\textwidth,viewport=50 20 650 300, clip]{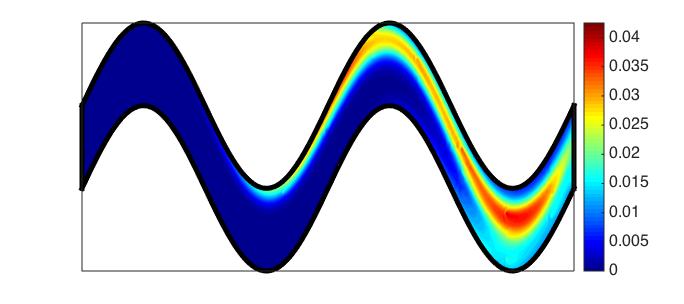}
\includegraphics[width = .48\textwidth, height=.13\textwidth,viewport=50 20 650 300, clip]{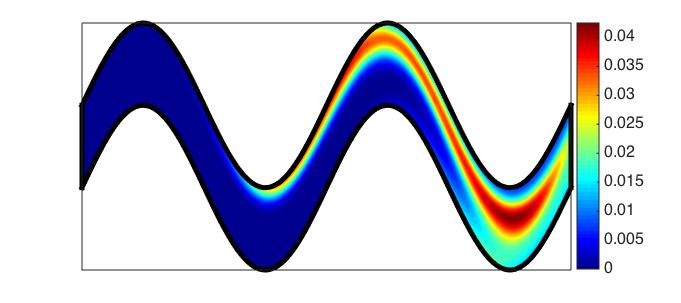}
DA (t=4.0) \hspace{2in} DNS (t=4.0)\\
\includegraphics[width = .48\textwidth, height=.13\textwidth,viewport=50 20 650 300, clip]{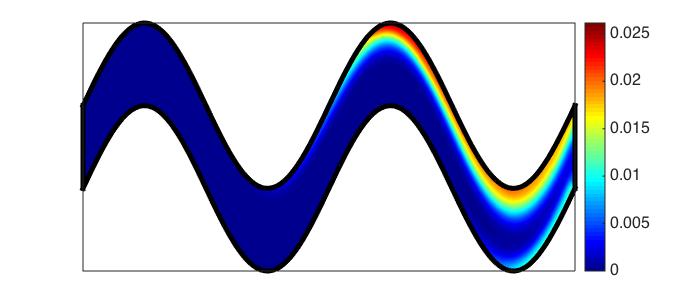}
\includegraphics[width = .48\textwidth, height=.13\textwidth,viewport=50 20 650 300, clip]{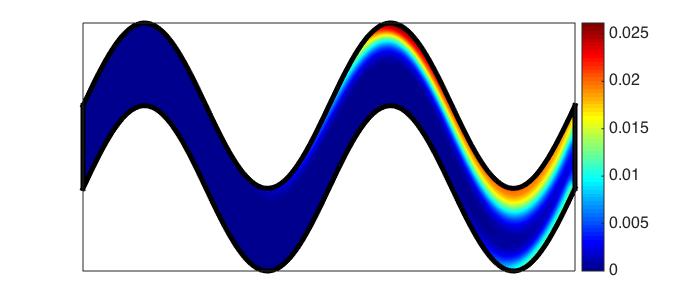}
	\caption{\label{dacontour} Contour plots of DA and DNS velocity magnitudes at times 0, 0.5, 1, 2.5, and 4.}
	\end{center}
\end{figure}

Finally, we compute the DA solution, using Algorithm \ref{ftda} with the same parameters as the DNS except zero initial conditions, taking the DNS solution $c$ as the true solution, and testing the algorithm using several choices of $\mu=\{0.01,0.1,1,10,100,1000\}$.   For the coarse mesh, we (purposely) choose a crude and simple mesh to show the robustness of the DA scheme, making a rectangular grid of $[0,4\pi]\times[-1,2]$, and intersecting it with the domain (see figure \ref{meshes} at bottom).  The nodes $x_{k_j}$ for the coarse mesh are the nodes from the fine mesh that are in element $E_j^H$ and closest to its center, and so for some elements, this node is on the boundary.

Convergence of the DA solution to the true (DNS) solution is shown in figure \ref{conv}, for varying $\mu$, in relative $L^2$ norms of the difference (relative norms are used since the true solution decays significantly over this time period).  We observe almost identical convergence for $\mu$=1, 10, 100, and 1000.  The DA solution for $\mu=0.1$ seems to also converge, but at a slower rate.  Convergence for $\mu=0.01$ is even slower, but does still appear to be converging.  

Note that if $\mu=0$, the relative error will always be one since the DA solution will always be 0 (0 initial condition, no forcing, homogenous  Dirichlet and Neumann boundary conditions).  The absolute error will go to zero for large enough $t$, but this corresponds to the case of all contaminant leaving the river through the outflow or finally diffusing away.  Hence waiting for the solution to assimilate with no nudging is not useful.

For the case $\mu=100$ (which is very closely resembled by the solutions to $\mu=$1, 10, and 1000), we show contour plots of the DA and true (DNS) solution at t=0, 0.5, 1, 2.5 and 4 in figure \ref{dacontour}.  We observe agreement between the solutions increasing, until finally by t=4 the solutions are visually indistinguishable.

\section{Application: Data assimilation in incompressible Navier-Stokes equations}

We consider now application of DA with the new interpolant for the incompressible NSE, which are given by
\begin{align}
u_t + (u\cdot \nabla )u - \nu \Delta u + \nabla p  &= f, \label{nse1}
\\ \nabla \cdot u & = 0, \label{nse2} 
\end{align}
where $u$ represents velocity, $p$ pressure, viscosity $\nu>0$, and external forcing $f$.  The system is also equipped with an initial condition $u_0$, and for simplicity of analysis we consider no-slip velocity boundary conditions and zero-mean pressure.

The associated DA scheme we consider uses an IMEX BDF2 temporal discretization, finite element spatial discretization, and uses our
new proposed efficient interpolant.  We assume the velocity-pressure finite element
spaces $(X_h,Q_h)=((P_k)^d,P_{k-1})$ satisfy the inf-sup stability condition (if the pressure space is
discontinuous, then an appropriate mesh must be chosen for the inf-sup condition to hold \cite{qin:p1:p0,Z11b,Z05,JLMNR17,GuzmanNeilan13}{\color{black})}.

Define the discretely divergence-free space by
\[ 
V_h := \{ v_h \in X_h \,\, |\,\, (\nabla \cdot v_h, q_h) = 0 \,\, \forall \,\, q_h \in Q_h   \},
\]
and the usual NSE trilinear form $b:X\times X\times X\rightarrow \mathbb{R}$ by
\[
b(u,v,w)=\frac12 (u\cdot\nabla v,w) - \frac12 (u\cdot\nabla w,v).
\]
The scheme reads as follows.

\begin{algorithm} \label{bdf2alg1}
	Given any initial conditions $v_h^0,\ v_h^{1} \in V_h$, forcing $f \in L^\infty(0,\infty; L^2(\Omega))$, true solution $ u \in L^\infty(0,\infty; L^2(\Omega))$, and nudging parameter $\mu>0$, find $(v_h^{n+1}, q_h^{n+1})$ $\in$ $(X_h, Q_h)$ for $n = 1,2,...$, satisfying
	\begin{eqnarray}
	\frac{1}{2\Delta t} \left( 3v_h^{n+1} - 4v_h^n + v_h^{n-1},\chi_h \right) + b(2v_h^{n} - v_h^{n-1},v_h^{n+1},\chi_h) - (q_h^{n+1},\nabla \cdot \chi_h) && \nonumber \\ + \nu (\nabla v_h^{n+1},\nabla \chi_h) + \mu (\tilde{P}_{L^2}^H (v_h^{n+1} - u^{n+1}),\tilde{P}_{L^2}^H \chi_h)&= & (f^{n+1},\chi_h), \label{femda1bdf2} \\
	(\nabla \cdot v_h^{n+1},r_h)  &=& 0 , \label{femda2bdf2}
	\end{eqnarray}
	for all $(\chi_h, r_h) \in X_h \times Q_h$.
\end{algorithm}

\subsection{Analysis of the DA algorithm for NSE}

We now state a lemma for long-time stability and well-posedness.  In our analysis, we will use the parameter $\alpha := \nu - C\mu H^2$, where $C$ depends on the size of the true solution.  We will require that $\alpha>0$, which can be thought of as the coarse mesh $H$ being fine enough.

\begin{lemma} \label{L2stabilityBDF2}
Assume $\alpha>0$.  Then  for any $\Delta t>0$, Algorithm \ref{bdf2alg1} is well-posed globally in time, and solutions are nonlinearly long-time stable: for any $n>1$,
	\begin{align*}
& \bigg( {\color{black}C_u^{-2}} \left( \| v_h^{n+1} \|^2 + \| v_h^n \|^2 \right)  + \frac{\alpha\Delta t}{4} \| \nabla v_h^{n+1} \|^2 + \frac{\mu\Delta t}{4}\| v_h^{n+1} \|^2 \bigg) \\
\le&
 \left( {\color{black}C_l^{-2} (\| v_h^1 \|^2 + \| v_h^0 \|^2)}  + \frac{\alpha\Delta t}{4} \| \nabla v_h^{1} \|^2 + \frac{\mu\Delta t}{4}\| v_h^{1} \|^2 \right) \left( \frac{1}{1+\lambda\Delta t} \right)^{n+1}  + C\lambda^{-1} (\nu^{-1}F^2 + \mu U^2) .
\end{align*}
where $\lambda=\min \{\frac{\mu {\color{black}C_l^2}}{4},\frac{\alpha C_P^{-2}{\color{black}C_l^2}}{4}, 2\Delta t^{-1} \}$, and $F:=\| f \|_{L^{\infty}(0,\infty;H^{-1})}$, $U := C\|\tilde{P}_{L^2}^Hu^{n+1}\|$.
\end{lemma}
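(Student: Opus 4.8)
The plan is to mirror the long-time stability argument of Lemma \ref{ftstab}, exploiting two structural simplifications that arise when \eqref{femda1bdf2} is tested against $\chi_h = v_h^{n+1}$. First, using the continuity equation \eqref{femda2bdf2} with $r_h = q_h^{n+1}$ shows the pressure coupling $(q_h^{n+1}, \nabla\cdot v_h^{n+1})$ vanishes. Second, the explicitly extrapolated convection drops out entirely, since the trilinear form is skew symmetric in its last two arguments and hence $b(2v_h^n - v_h^{n-1}, v_h^{n+1}, v_h^{n+1}) = 0$; this is exactly why the skew form $b$ is used, and it means the nonlinearity plays no role in the a priori bound. Applying the $G$-stability identity \eqref{Gidentity} to the time-difference term and discarding the nonnegative $\tfrac{1}{4\dt}\norm{v_h^{n+1}-2v_h^n+v_h^{n-1}}^2$ contribution then reduces the equation to the model form already treated in Lemma \ref{ftstab}, now carrying the extra coercive nudging term $\mu\norm{\tilde{P}_{L^2}^H v_h^{n+1}}^2$ on the left.

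Next I would convert the interpolated nudging coercivity into genuine $L^2$ coercivity. Writing $\tilde{P}_{L^2}^H v_h^{n+1} = v_h^{n+1} + (\tilde{P}_{L^2}^H v_h^{n+1} - v_h^{n+1})$, applying a Young-type split, and then invoking \eqref{interp1} yields a lower bound of the form $\mu\norm{\tilde{P}_{L^2}^H v_h^{n+1}}^2 \geq \tfrac{\mu}{2}\norm{v_h^{n+1}}^2 - C\mu H^2\norm{\nabla v_h^{n+1}}^2$. The negative gradient contribution is absorbed into the viscous term $\nu\norm{\nabla v_h^{n+1}}^2$, leaving the reduced viscosity $\alpha = \nu - C\mu H^2$; the hypothesis $\alpha>0$ is precisely the condition that keeps this combined term coercive. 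Since the nonlinearity has vanished, the constant $C$ here is the interpolation constant of Lemma \ref{mainlemma}, so adopting the possibly larger $C$ of the unified definition of $\alpha$ only shrinks $\alpha$ and is harmless as long as it remains positive. The two data terms on the right are handled by standard inequalities: the forcing via $H^{-1}$ duality and Young's inequality, leaving a controlled fraction of the viscous term and producing $C\nu^{-1}F^2$, and the right-hand nudging term $\mu(\tilde{P}_{L^2}^H u^{n+1},\tilde{P}_{L^2}^H v_h^{n+1})$ via Cauchy--Schwarz and Young, absorbing a fraction of $\mu\norm{\tilde{P}_{L^2}^H v_h^{n+1}}^2$ and producing the data quantity $U = C\norm{\tilde{P}_{L^2}^H u^{n+1}}$.

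After multiplying through by $2\dt$, the estimate takes the form
\[
Q^{n+1} + \bigl(\text{leftover coercive terms}\bigr) \leq Q^n + \dt\,\bigl(C\nu^{-1}F^2 + C\mu U^2\bigr),
\]
where $Q^{n+1} := \norm{[v_h^{n+1};v_h^n]}_G^2 + \tfrac{\alpha\dt}{4}\norm{\nabla v_h^{n+1}}^2 + \tfrac{\mu\dt}{4}\norm{v_h^{n+1}}^2$ is exactly the decaying quantity in the statement. As in Lemma \ref{ftstab}, I would split the surviving coercive terms, add $\tfrac{\alpha\dt}{4}\norm{\nabla v_h^n}^2$ and $\tfrac{\mu\dt}{4}\norm{v_h^n}^2$ to both sides to reconstitute $Q^n$ on the right, and then bound the leftover coercivity below by $\lambda\dt\,Q^{n+1}$. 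The three entries of the minimum defining $\lambda$ arise from matching the three constituents of $Q^{n+1}$ against this leftover budget: the paired $L^2$-nudging terms controlling the $G$-norm through norm equivalence give $\tfrac{\mu C_l^2}{4}$, the paired viscous terms controlling the $G$-norm through Poincar\'e's inequality and norm equivalence give $\tfrac{\alpha C_P^{-2}C_l^2}{4}$, and the retained gradient and mass contributions force the ceiling $2\dt^{-1}$ — this directly generalizes the two-way minimum of Lemma \ref{ftstab}. Applying Lemma \ref{geoseries} with $r = 1+\lambda\dt$ and a final use of the $G$-norm equivalence $C_l\norm{\cdot}_G \le \norm{\cdot} \le C_u\norm{\cdot}_G$ then produces the stated bound.

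The main obstacle I anticipate is the bookkeeping in this last step: one must distribute the finite coercive budget, namely the surviving $\alpha\dt\norm{\nabla v_h^{n+1}}^2$ and $\mu\dt\norm{v_h^{n+1}}^2$, so that a clean multiple $\lambda\dt\,Q^{n+1}$ can be extracted while still retaining enough of each term inside $Q^{n+1}$, and simultaneously verify that the right-hand absorptions leave every left-hand term nonnegative. This is exactly where $\alpha>0$ is indispensable, as it is what guarantees the combined viscous--nudging term stays coercive. Well-posedness is then immediate: because the scheme is IMEX, the advecting velocity $2v_h^n - v_h^{n-1}$ is known data, so \eqref{femda1bdf2}--\eqref{femda2bdf2} is a linear, finite-dimensional, inf-sup stable saddle-point system for $(v_h^{n+1},q_h^{n+1})$, and the a priori bound just established rules out nontrivial homogeneous solutions, yielding existence and uniqueness at each step and hence global-in-time well-posedness.
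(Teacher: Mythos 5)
Your proposal is correct and is essentially the paper's own argument: the paper proves this lemma by citing Lemma 3.4 of \cite{LRZ18} and noting that it adapts ``with just minor modifications, using the analysis of the nudging term in the proof of Lemma \ref{mainlemma}'' --- which is exactly what you carry out, namely testing with $\chi_h = v_h^{n+1}$ so the pressure and skew-symmetrized convection vanish, applying the $G$-stability identity, converting $\mu\|\tilde{P}_{L^2}^H v_h^{n+1}\|^2$ into genuine $L^2$ coercivity at the cost of a $C\mu H^2\|\nabla v_h^{n+1}\|^2$ term absorbed into the viscosity (producing $\alpha$ and the need for $\alpha>0$), and closing with Lemma \ref{geoseries} and $G$-norm equivalence. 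The only difference is presentational: you write the argument out self-contained (modeled on Lemma \ref{ftstab}) rather than by citation, and your bookkeeping of the three-way minimum defining $\lambda$ and the final well-posedness remark are consistent with the stated result.
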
 

\begin{proof}
Well-posedness and long-time stability was proven in Lemma 3.4 of \cite{LRZ18} for a similar algorithm, except  with a different treatment of the nudging term.  This proof can be adapted with just minor modifications, using the analysis of the nudging term in the proof of Lemma \ref{mainlemma} above, to immediately provide the long time stability result.  With this established, well-posedness follows directly.
\end{proof}

We now prove that solutions to Algorithm \ref{bdf2alg1} converge to the true NSE solution at an optimal rate in the $L^2$ norm, globally in time, provided restrictions on $\dt$ and $\mu$ are satisfied. The time derivative term will again be handled with the G-stability theory in a manner similar to the stability proof.

\begin{theorem} \label{bdf2conv2}
	Let $u,p$ solve the NSE \eqref{nse1}-\eqref{nse2} with given $f\in L^{\infty}(0,\infty;L^2(\Omega))$ and $u_0\in L^2(\Omega)$, with $u \in L^\infty (0, \infty; H^{k+1}(\Omega))$, $p \in L^\infty (0, \infty; H^{k}(\Omega))$ ($k\ge 1$), $u_{tt} \in L^{\infty}(0,\infty;L^2(\Omega))$, and $u_{ttt} \in L^\infty(0, \infty; H^1(\Omega))$.  Denote $U:= | u |_{L^{\infty}(0,\infty;H^{k+1})}$ and $P:= | p |_{L^{\infty}(0,\infty;H^{k})}$.  Assume the time step size satisfies
\[
\Delta t <  CM^2\nu^{-1} \left( h^{2k-3} U^2  +   \|\nabla u^{n+1}\|^2_{L^3} + \|u^{n+1}\|_{L^\infty}^2 \right)^{-1},
\]
and the parameter $\mu$ satisfies
\[
 CM^2\nu^{-1} \bigg(  h^{2k-3} U^2 +  \|\nabla u^{n+1}\|_{L^3}^2 + \| u^{n+1}\|_{L^{\infty}}^2  \bigg) < \mu < \frac{2\nu}{C H^2}.
\]
Then if the boundary is sufficiently smooth so that the discrete Stokes projection has optimal approximation properties, the error in solutions to Algorithm \ref{bdf2alg1} satisfies, for any $n$,	
\[
\| u^n - v_h^n \|^2 \le C\left( \frac{1}{1+\lambda\Delta t} \right)^{n} \| u_0 - v_h^0 \|^2 + \frac{R}{\lambda},
\] 
where 
	$
	R =C(\Delta t^4 + H^2h^{2k} )$
		and $\lambda = 2\alpha {\color{black} C_l^2}  C_P^{-2}$.	
\end{theorem}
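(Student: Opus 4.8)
The plan is to mirror the convergence proof of Theorem~\ref{ftconv}, with the two structural changes required by the NSE: the error must be split using the \emph{discrete Stokes projection} $\Pi_h$ (rather than the $H^1_0$ projection $\pi_h$) so that incompressibility is respected and the pressure is eliminated when testing against functions in $V_h$, and the nonlinear convection term---treated explicitly through the extrapolation $2v_h^n-v_h^{n-1}$---must be controlled. The restrictions on $\Delta t$ and $\mu$ in the hypotheses are precisely the conditions under which this explicit nonlinearity can be absorbed by the available dissipation.

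First I would write the variational identity satisfied by $(u,p)$ consistently with the scheme, applying Taylor's theorem to both the BDF2 stencil and to the extrapolation $2u^n-u^{n-1}=u^{n+1}+O(\Delta t^2)$, so that a truncation remainder of order $\Delta t^2$ (involving $u_{ttt}$ and $u_{tt}$) appears on the right-hand side. Subtracting this from \eqref{femda1bdf2} and setting $e^n=u^n-v_h^n$ gives the error equation. I then decompose $e^n=\eta^n+\phi_h^n$ with $\eta^n=u^n-\Pi_h u^n$ and $\phi_h^n=\Pi_h u^n-v_h^n\in V_h$, test with $\chi_h=\phi_h^{n+1}$ (which annihilates the discrete pressure and divergence terms), and invoke the $G$-stability identity \eqref{Gidentity} to rewrite the time-derivative term as a telescoping $G$-norm difference plus a nonnegative remainder.

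The linear contributions---the truncation remainder, the viscous projection error $\nu(\nabla\eta^{n+1},\nabla\phi_h^{n+1})$, the projection-error time differences $(3\eta^{n+1}-4\eta^n+\eta^{n-1})/2\Delta t$, and the pressure consistency term---are bounded by standard Cauchy--Schwarz and Young inequalities together with the optimal Stokes-projection estimates, feeding the $\Delta t^4$ and $h^{2k}$ pieces of $R$. The nudging term is handled exactly as in Theorem~\ref{ftconv}: I add and subtract $\phi_h^{n+1}$ inside both slots, then apply \eqref{interp1} and \eqref{interp2}. This converts the weak nudging term into a genuine $\mu\|\phi_h^{n+1}\|^2$ on the left, at the price of correction terms bounded by $C\mu H^2\|\nabla\phi_h^{n+1}\|^2$; folding these into the viscous term produces the effective viscosity $\alpha=\nu-C\mu H^2$ and forces the upper restriction $\mu<2\nu/(CH^2)$ so that $\alpha>0$.

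The main obstacle is the convection term. After subtracting and using skew-symmetry to kill $b(2v_h^n-v_h^{n-1},\phi_h^{n+1},\phi_h^{n+1})=0$, it splits into $b(2\eta^n-\eta^{n-1},u^{n+1},\phi_h^{n+1})$, $b(2v_h^n-v_h^{n-1},\eta^{n+1},\phi_h^{n+1})$, and the delicate term $b(2\phi_h^n-\phi_h^{n-1},u^{n+1},\phi_h^{n+1})$, which couples the error at three consecutive time levels. The first two are controlled by projection estimates together with the long-time stability bound on $v_h$ from Lemma~\ref{L2stabilityBDF2}; these produce the $h^{2k}$ entries of $R$ as well as the $h^{2k-3}U^2$ terms appearing in the restrictions. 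The delicate term is estimated with H\"older and Sobolev embeddings in the form $|b(a,u^{n+1},\phi_h^{n+1})|\le C(\|\nabla u^{n+1}\|_{L^3}+\|u^{n+1}\|_{L^\infty})\|a\|\,\|\nabla\phi_h^{n+1}\|$ with $a=2\phi_h^n-\phi_h^{n-1}$, precisely the norms of $u^{n+1}$ that enter the hypotheses. Young's inequality then leaves a destabilizing contribution proportional to $C\nu^{-1}(\|\nabla u^{n+1}\|_{L^3}^2+\|u^{n+1}\|_{L^\infty}^2+h^{2k-3}U^2)$ acting on the error; the lower bound on $\mu$ together with the stated upper bound on $\Delta t$ are exactly what guarantee that this contribution is dominated by the combined nudging and viscous dissipation, so that after multiplying by $2\Delta t$ the one-step inequality takes the contractive form $\|[\phi_h^{n+1};\phi_h^n]\|_G^2+(\text{dissipation})\le\|[\phi_h^n;\phi_h^{n-1}]\|_G^2+\Delta t\,R$ required by Lemma~\ref{geoseries}. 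Applying that lemma yields the geometric factor $(1+\lambda\Delta t)^{-n}$ with $\lambda=2\alpha C_l^2 C_P^{-2}$ coming from Poincar\'e and $G$-norm equivalence on the viscous term; a final use of $G$-norm equivalence, the triangle inequality, and the Stokes approximation estimates recovers the bound on $\|u^n-v_h^n\|^2$ with $R=C(\Delta t^4+H^2h^{2k})$.
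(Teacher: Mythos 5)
Your proposal is correct and follows essentially the same route as the paper: the paper's own proof is given only by reference to Theorem 3.7 of \cite{LRZ18}, modified exactly as you describe---replacing the $L^2$ projection by the discrete Stokes projection in the definition of $\eta$ (as noted in \cite{GNT18}) and treating the nudging term via the add-and-subtract of $\phi_h^{n+1}$ with \eqref{interp1}--\eqref{interp2} as in Theorem \ref{ftconv}, which produces $\alpha=\nu-C\mu H^2$ and the stated restrictions on $\mu$ and $\Delta t$. Your sketch simply fills in the details (error equation, $G$-stability, splitting of the explicit convection term, Lemma \ref{geoseries}) that the paper leaves to the cited reference.
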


\begin{proof}
	The proof of this theorem follows similar to Theorem 3.7 in \cite{LRZ18}, but with some minor modifications, in particular using the treatment of the nudging term from the previous section, and (as pointed out in \cite{GNT18}) using the discrete Stokes projection in the definition of the interpolation error term $\eta$ instead of the $L^2$ projection into $V_h$.
	\end{proof}

\subsection{Numerical experiments for incompressible NSE}

\begin{figure}[!ht]
	\centering
	\includegraphics[scale = .8]{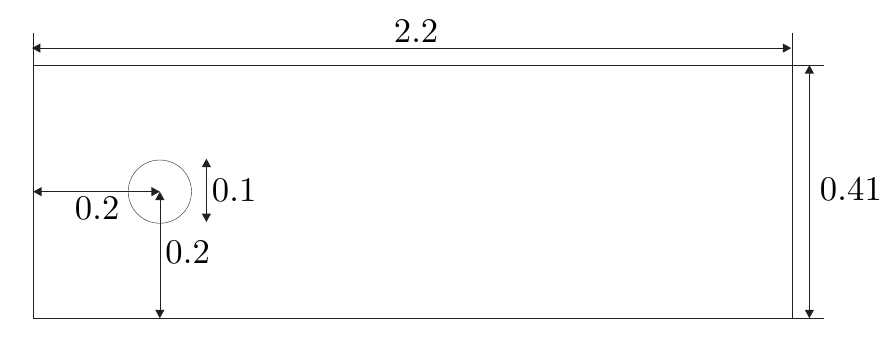}
	\caption{\label{cylinderdomain} Shown above is the domain for the flow past a cylinder test problem.}
\end{figure}

To test the DA algorithm for incompressible NSE, we consider Algorithm \ref{bdf2alg1} applied to 2D channel flow 
past a cylinder \cite{ST96}.  We will consider Reynolds numbers $Re=100$ and $Re=500$.  The domain is the rectangular channel  {\color{black}[0, 2.2]}$\times$[0, 0.41], with a cylinder centered at $(0.2,0.2)$ with radius $0.05,$ see Figure \ref{cylinderdomain}. There is no external forcing ($f=0$), no-slip boundary conditions are prescribed for the walls and the cylinder, an inflow profile is given by 
\begin{align*}
u_1(0,y,t) & = u_1(2.2,y,t) = \frac{6}{0.41^2}y(0.41-y),
\\u_2(0,y,t) & = u_2(2.2,y,t) = 0.
\end{align*}
For Re=100, we take $\nu=0.001$, and for Re=500, we take $\nu=0.0002$.

We prescribe different outflow boundary conditions for the two cases.  For $Re=100$, we enforce the Dirichlet condition that the outflow be the same as the inflow, and for $Re=500$, we use the zero-traction boundary condition and enforce it with the usual `do-nothing' condition.  The nonlinear term is also treated differently, as no skew symmetry is used. Thus the $Re=500$ test does not fit the assumptions of our analysis above (which assumes full Dirichlet boundary conditions), and the difference is important since the nonlinear terms that vanish in our analysis will no longer vanish (additional boundary integrals will arise, even if divergence-free elements are used).  Still, channel flow with no stress / no traction outflow conditions is important in practice since Dirichlet outflow is not physical for higher Reynolds numbers, and thus this is an important practical test for DA algorithms.

Since we do not have access to a true solution for this problem, we instead use computed solutions.  They are obtained using Algorithm \ref{bdf2alg1} but without nudging (i.e. $\mu=0$), using $(P_2,P_1^{disc})$ Scott-Vogelius elements on barycenter refined Delaunay meshes that provide 35K velocity dof for Re=100, and 103K velocity dof for Re=500, a time step of $\Delta t=0.002$, and with the simulations starting from rest ($u_h^0=u_h^{1}=0$).  We will refer to these solutions as the DNS solutions.  Lift and drag calculations were performed for the computed solution and compared to the literature \cite{ST96,XMRT18}, which verified the accuracy of the {\color{black}DNSs}.

For the lift and drag calculations, we used the definitions
\begin{align*}
c_d(t) &= 20\int_S \left( \nu \frac{\partial u_{t_S}(t)}{\partial n}n_y - p(t)n_x \right)dS,
\\c_l(t) &= 20 \int_S \left( \nu \frac{\partial u_{t_S}(t)}{\partial n}n_x - p(t)n_y \right)dS,
\end{align*}
where $p(t)$ is pressure, $u_{t_S}$ is tangential velocity $S$ is the cylinder, and $n = \langle n_x, n_y\rangle$ the outward unit normal to the domain. For the calculations, we used the global integral formulation from \cite{J04}.

The coarse meshes for DA are constructed using the intersection of uniform rectangular meshes with the domain.  We take $H$ to be the width of each rectangle, and use several choices of $H$ in our tests.  Figure \ref{cylmesh} shows in red the 35K dof mesh and associated $H=\frac{2.2}{8}$ coarse mesh in black.

\begin{figure}[!ht]
	\centering
	\includegraphics[width = .50\textwidth,height=.12\textwidth]{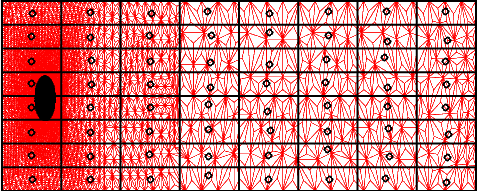}	
	\caption{\label{cylmesh} Shown above is the FE mesh (in red) and the $H=\frac{2.2}{8}$ coarse mesh and nodes (in black).}
\end{figure}

For the DA computations, we start from zero initial conditions $v_h^1 = v_h^0=0$, use the same spatial and temporal discretization parameters as the DNS for that Reynolds number, and start assimilation with the t=5 DNS solution (i.e., time 0 for DA corresponds to t=5 for the DNS).   The simulations are run on [5,10], and with varying $\mu$ and $H$.

\subsubsection{Re=100}

\begin{figure}[!ht]
	\centering
	\includegraphics[width = .50\textwidth]{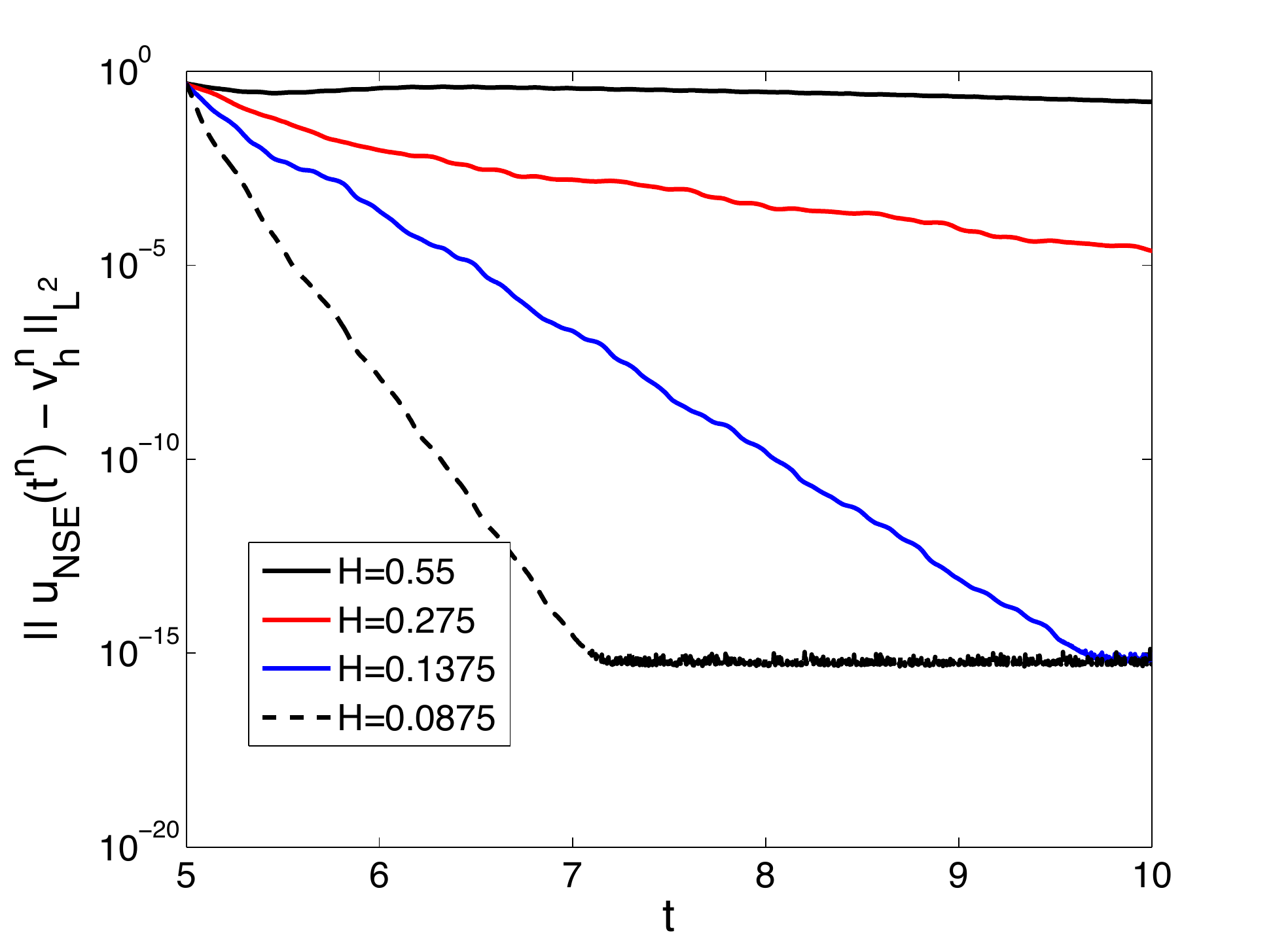}	
	\caption{\label{differenceL2} Shown above is the $L^2$ difference between the DA and DNS solutions versus time, for varying $\mu$ and $H$.}
\end{figure}

\begin{figure}[!ht]
	\centering
	\includegraphics[width = .49\textwidth,height=.3\textwidth]{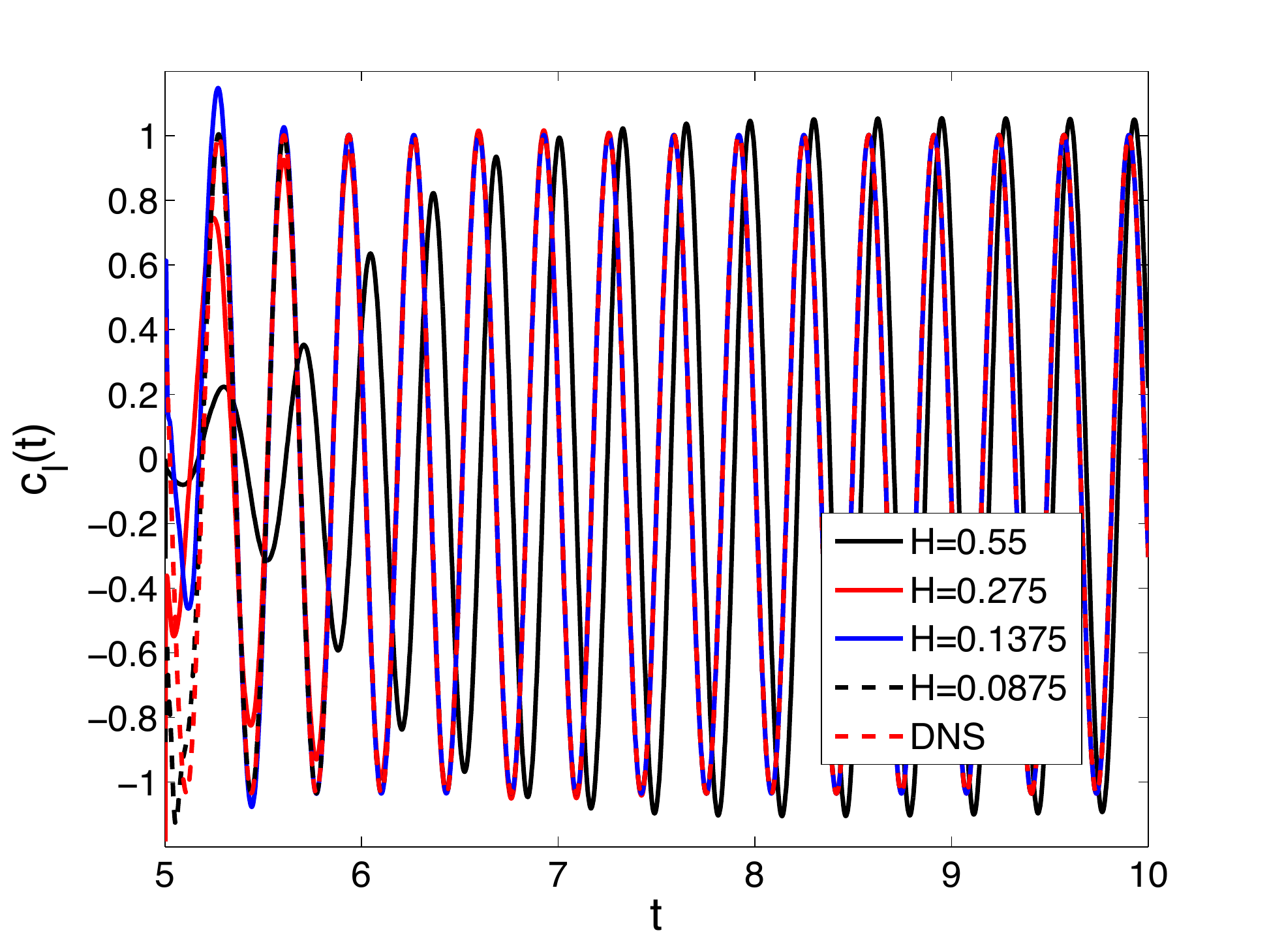}	
	\includegraphics[width = .49\textwidth,height=.3\textwidth]{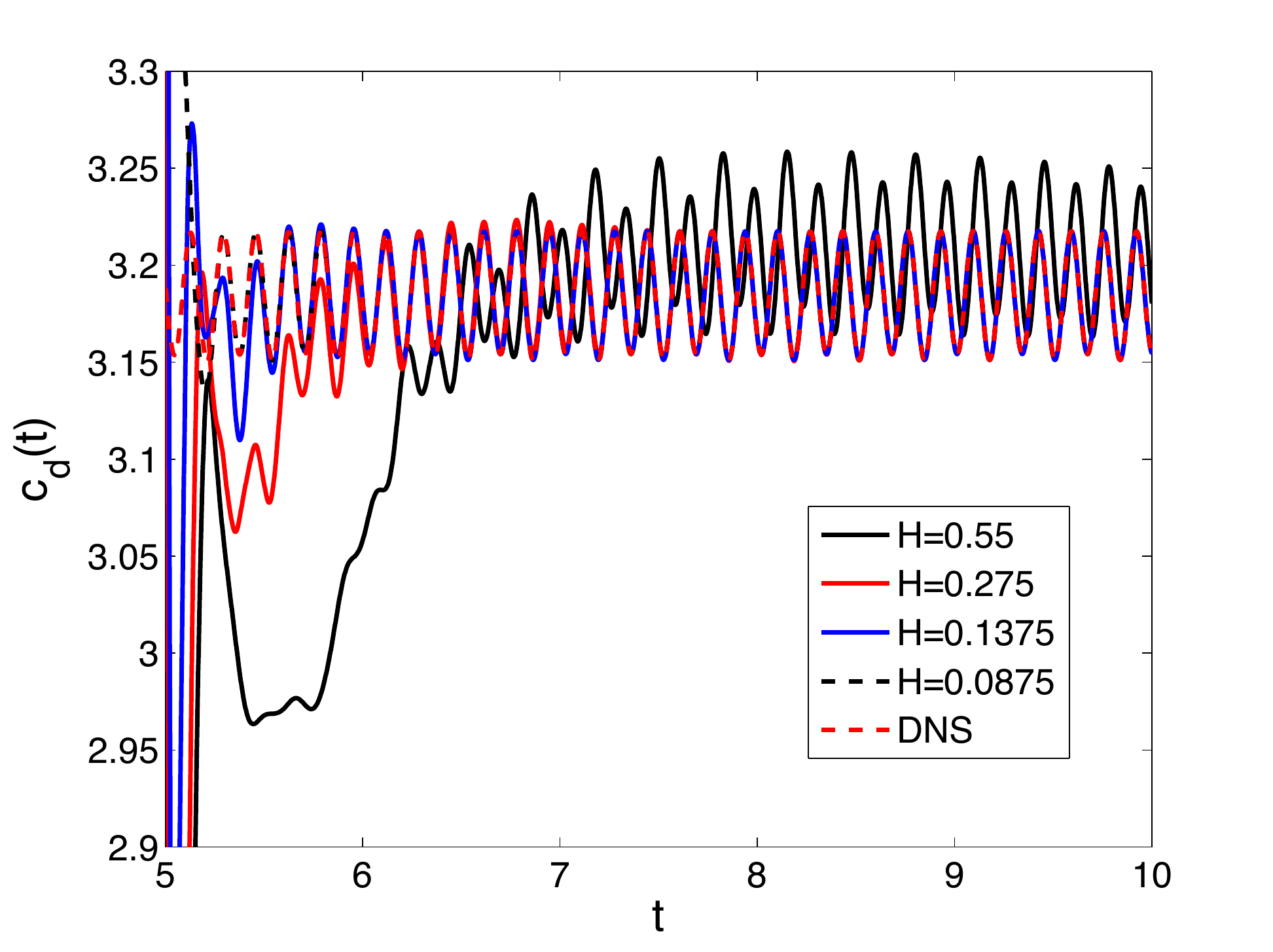}	
	\caption{\label{difference} Shown above are the lift and drag coefficients for Re=100 simulations for DA ($\mu=10$) with varying $H$, and for the DNS.}
\end{figure}

\begin{figure}[!ht]
\begin{center}
DA, H=0.55 (t=5.0) \hspace{.9in} DA, H=0.1375 (t=5.0) \hspace{.75in} DNS (t=5.0) \ \ \ \ \ \\
\includegraphics[width = .32\textwidth, height=.13\textwidth,viewport=80 20 650 230, clip]{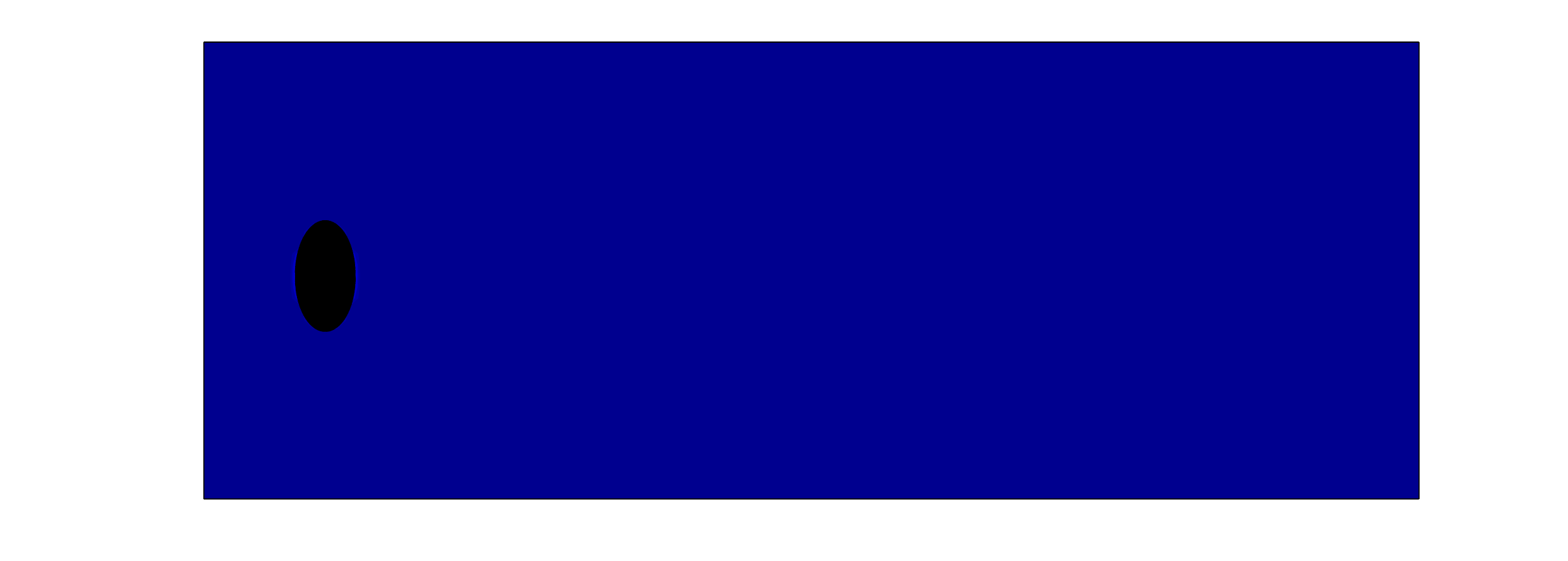}
\includegraphics[width = .32\textwidth, height=.13\textwidth,viewport=80 20 650 230, clip]{cylDA0.pdf}
\includegraphics[width = .32\textwidth, height=.13\textwidth,viewport=80 20 650 230, clip]{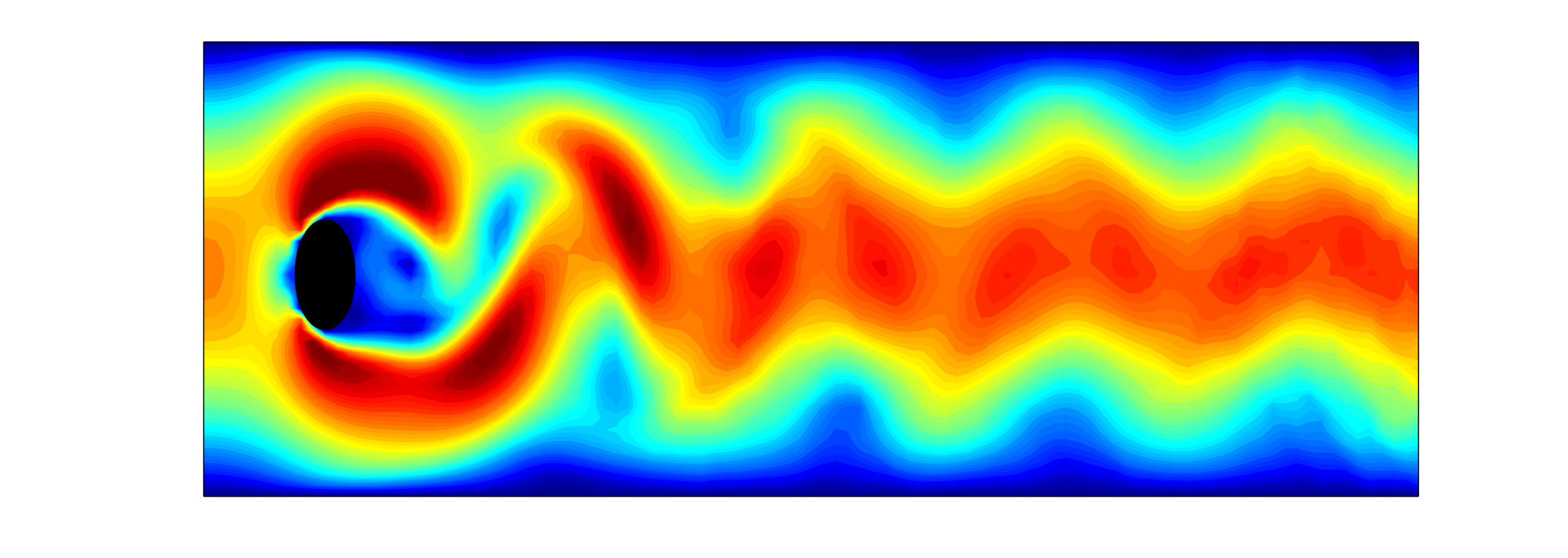}
DA, H=0.55 (t=5.5) \hspace{.9in} DA, H=0.1375 (t=5.5) \hspace{.75in} DNS (t=5.5) \ \ \ \ \ \\
\includegraphics[width = .32\textwidth, height=.13\textwidth,viewport=55 20 500 260, clip]{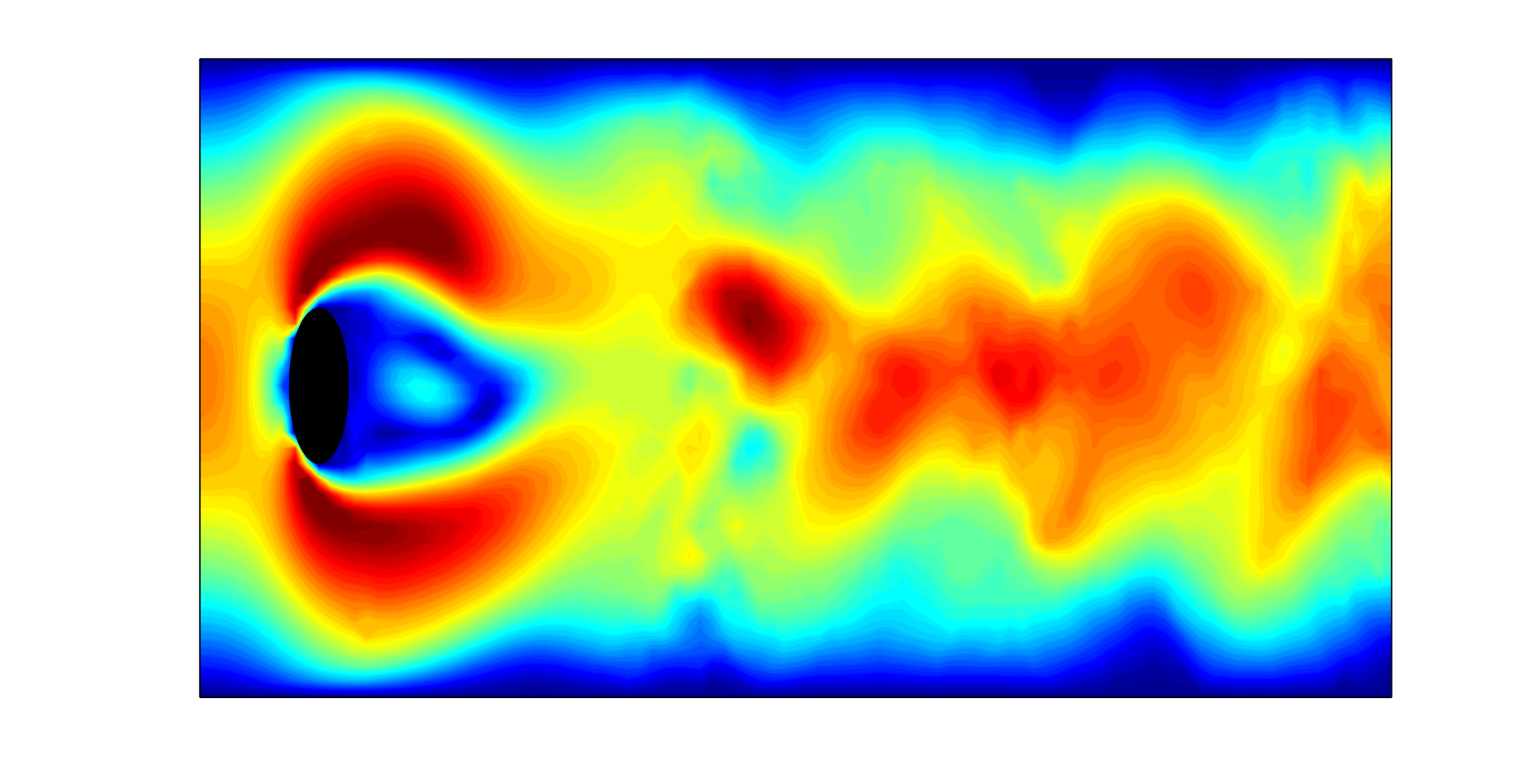}
\includegraphics[width = .32\textwidth, height=.13\textwidth,viewport=55 20 500 260, clip]{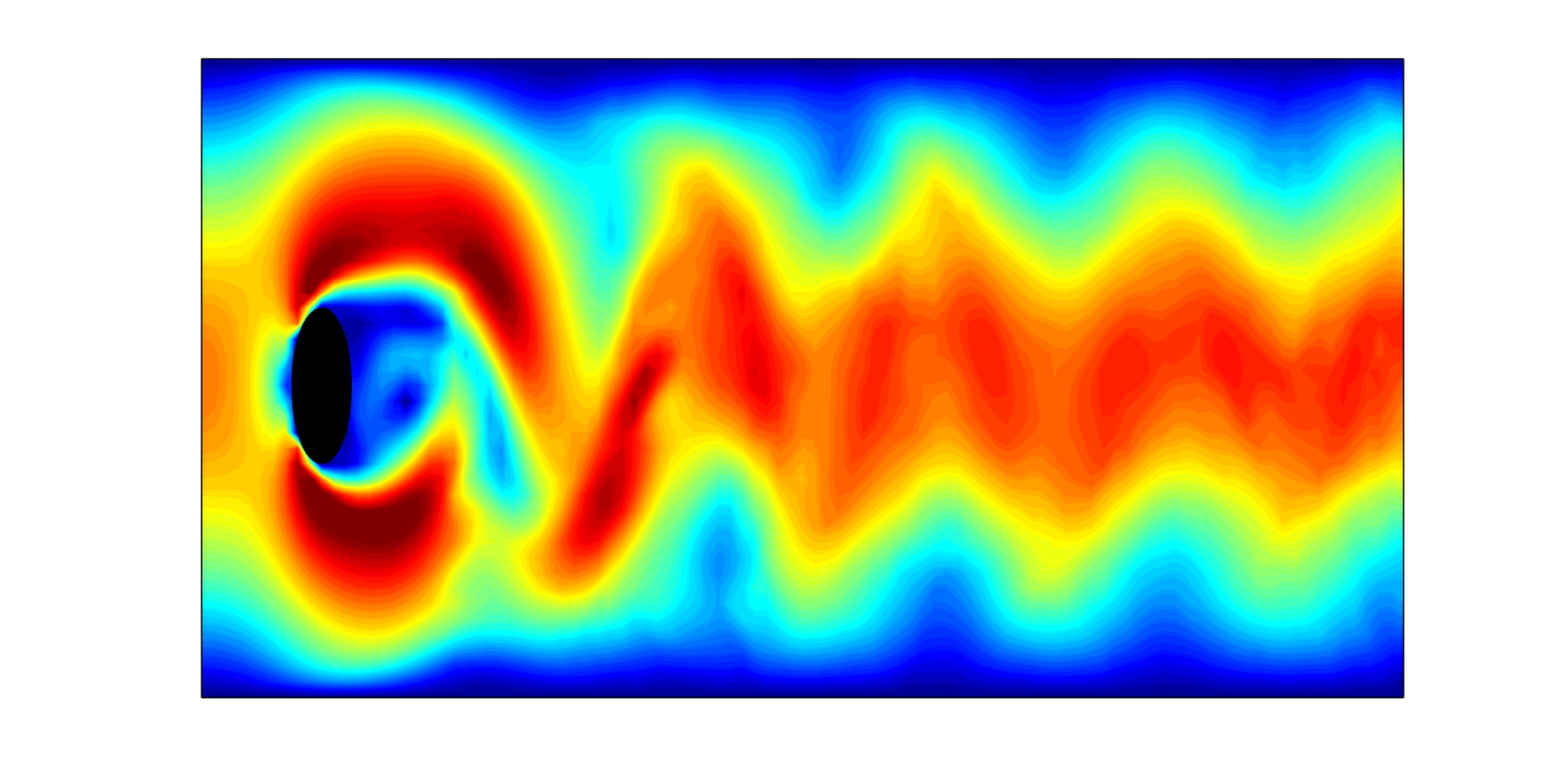}
\includegraphics[width = .32\textwidth, height=.13\textwidth,viewport=55 20 500 260, clip]{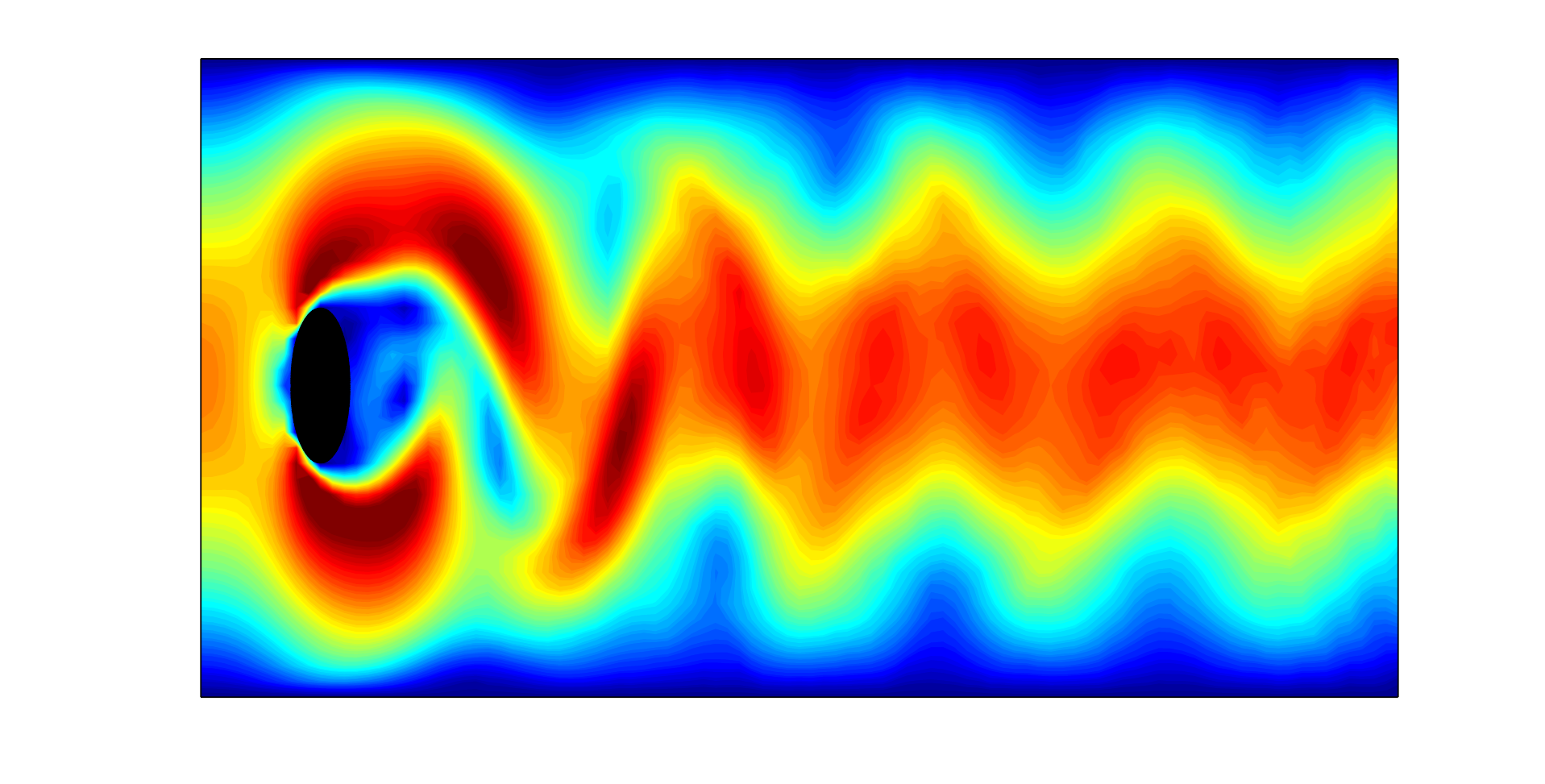}
DA, H=0.55 (t=6.0) \hspace{.9in} DA, H=0.1375 (t=6.0) \hspace{.75in} DNS (t=6.0) \ \ \ \ \ \\
\includegraphics[width = .32\textwidth, height=.13\textwidth,viewport=55 20 500 260, clip]{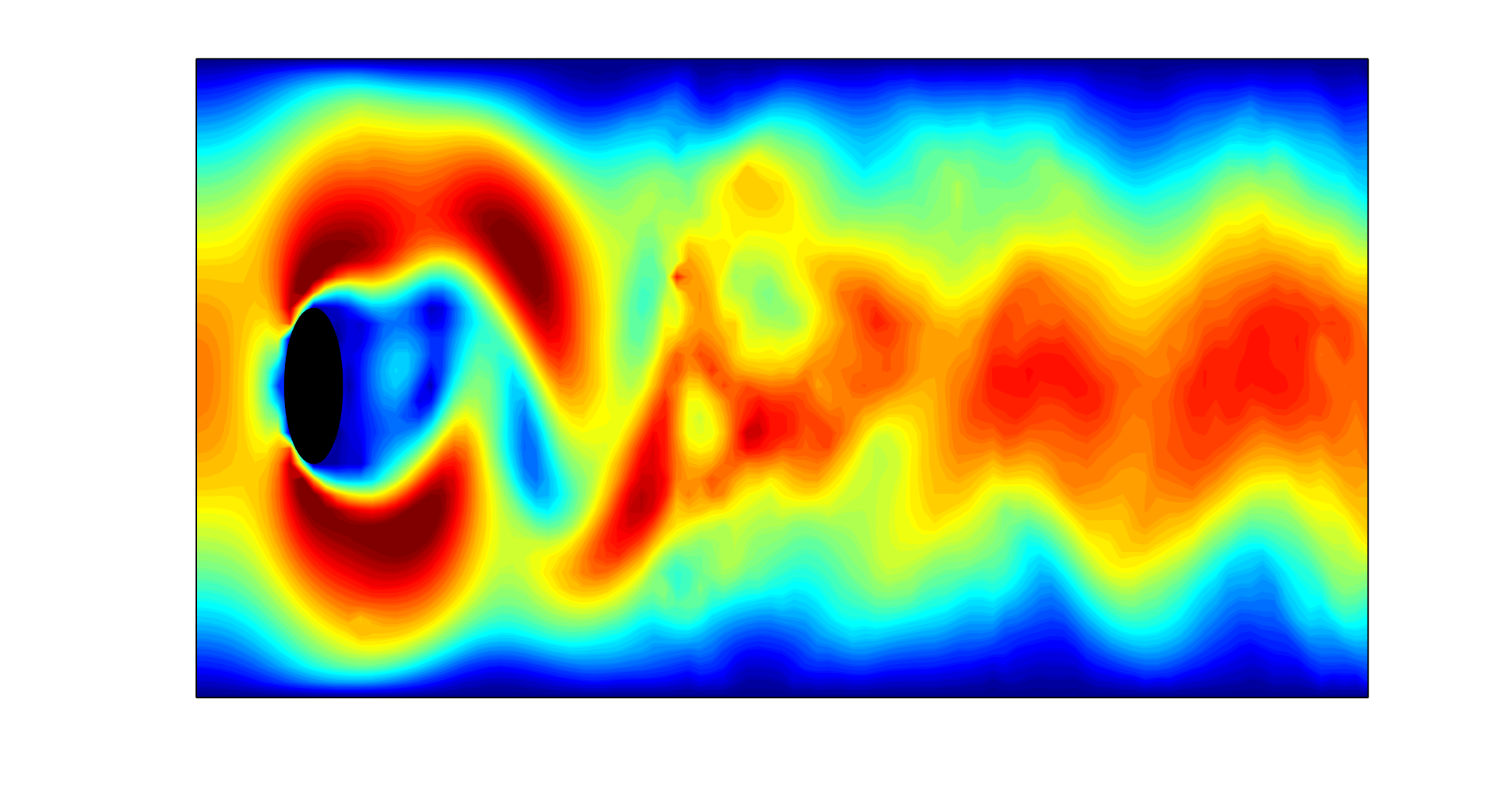}
\includegraphics[width = .32\textwidth, height=.13\textwidth,viewport=55 20 500 260, clip]{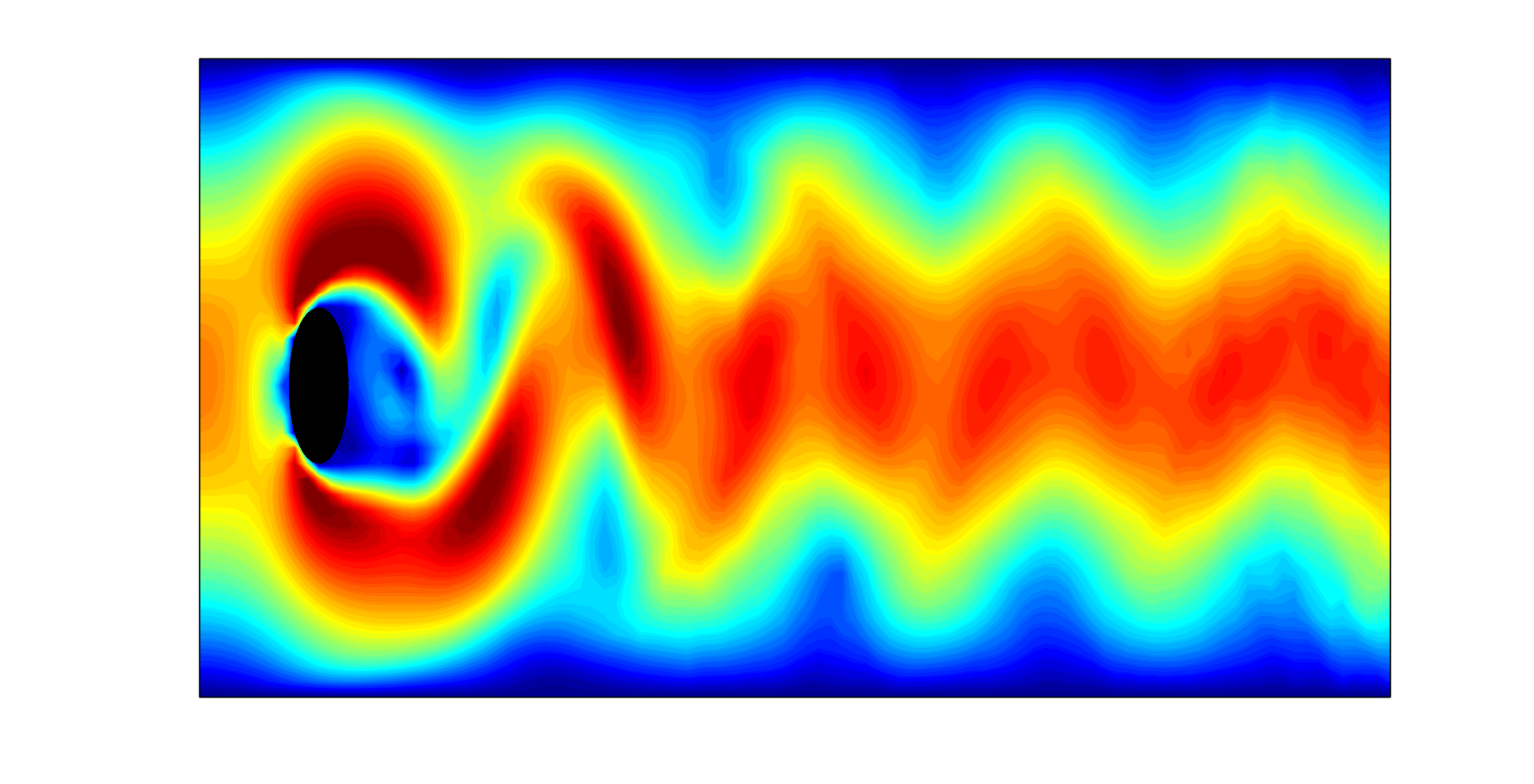}
\includegraphics[width = .32\textwidth, height=.13\textwidth,viewport=55 20 500 260, clip]{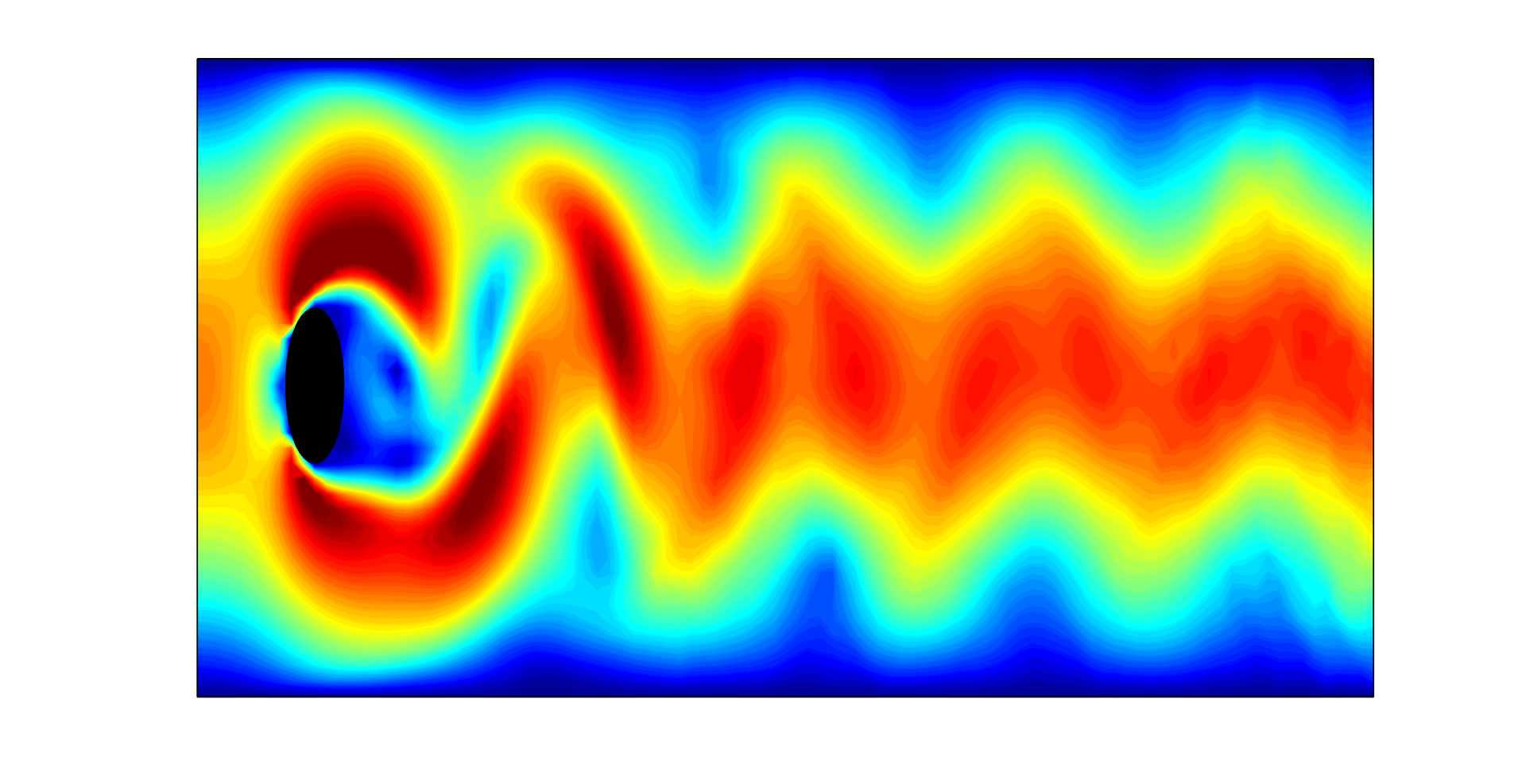}
DA, H=0.55 (t=10) \hspace{.9in} DA, H=0.1375 (t=10) \hspace{.75in} DNS (t=10) \ \ \ \ \ \\
\includegraphics[width = .32\textwidth, height=.13\textwidth,viewport=55 20 500 260, clip]{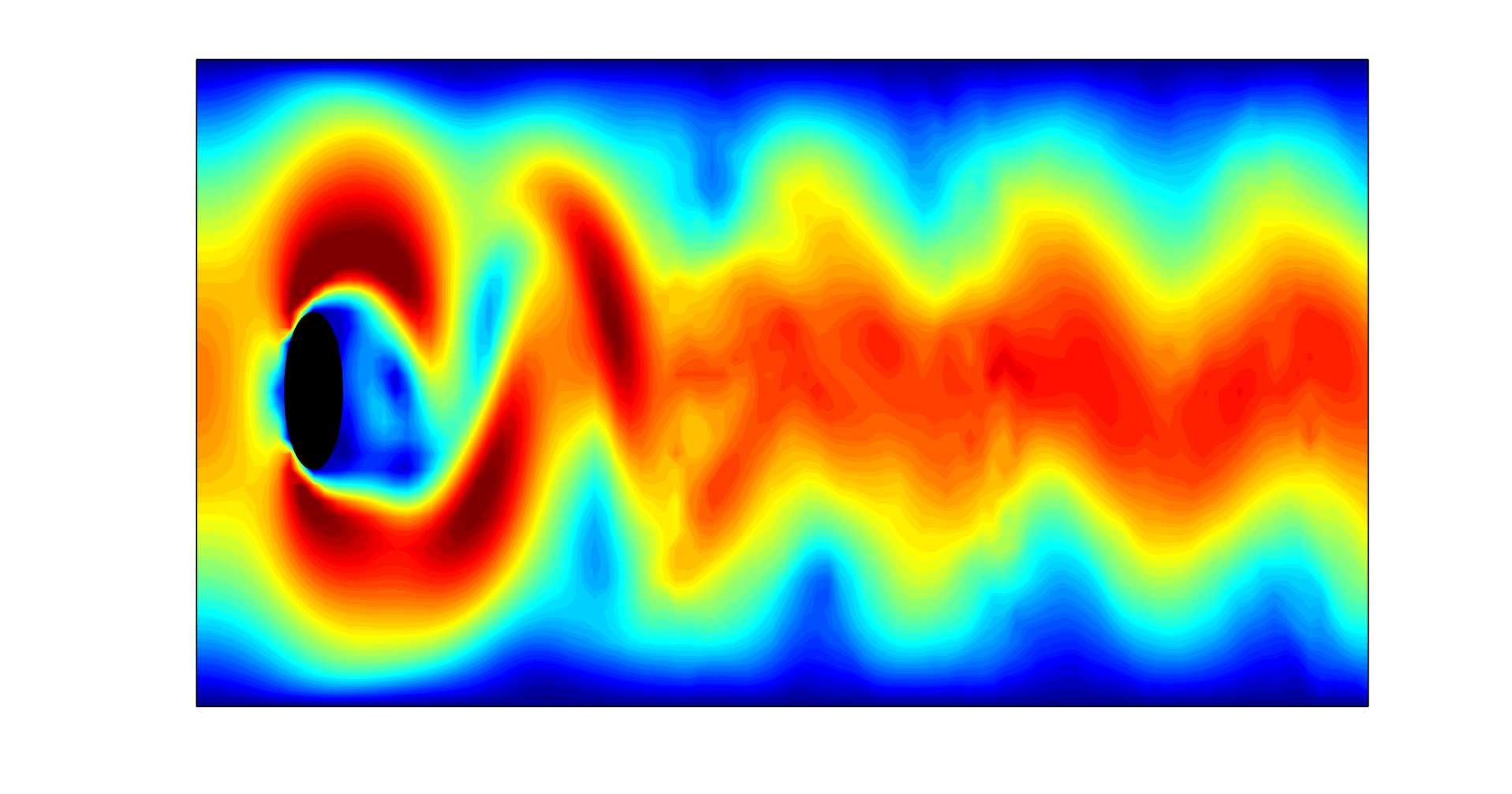}
\includegraphics[width = .32\textwidth, height=.13\textwidth,viewport=55 20 500 260, clip]{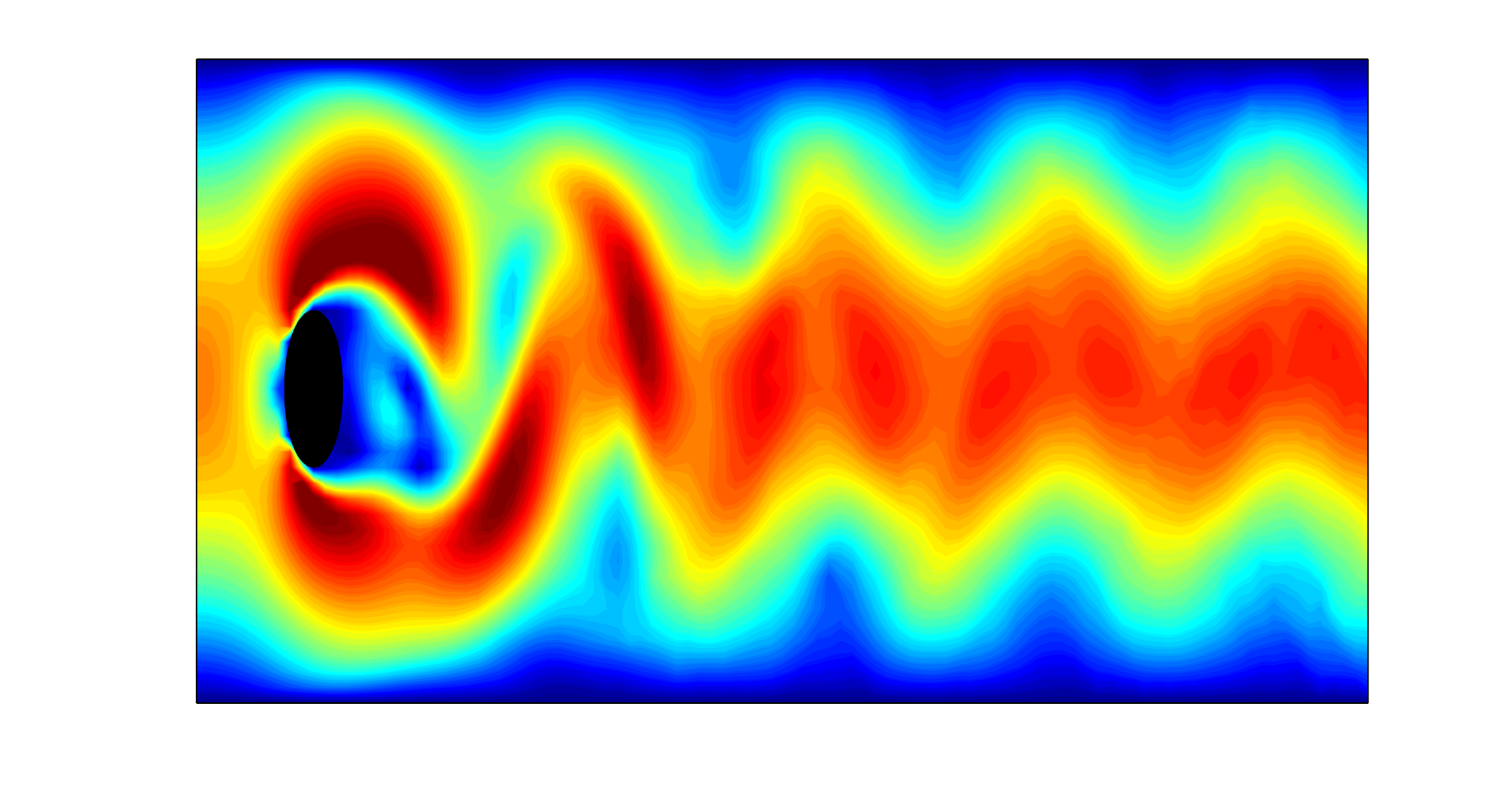}
\includegraphics[width = .32\textwidth, height=.13\textwidth,viewport=55 20 500 260, clip]{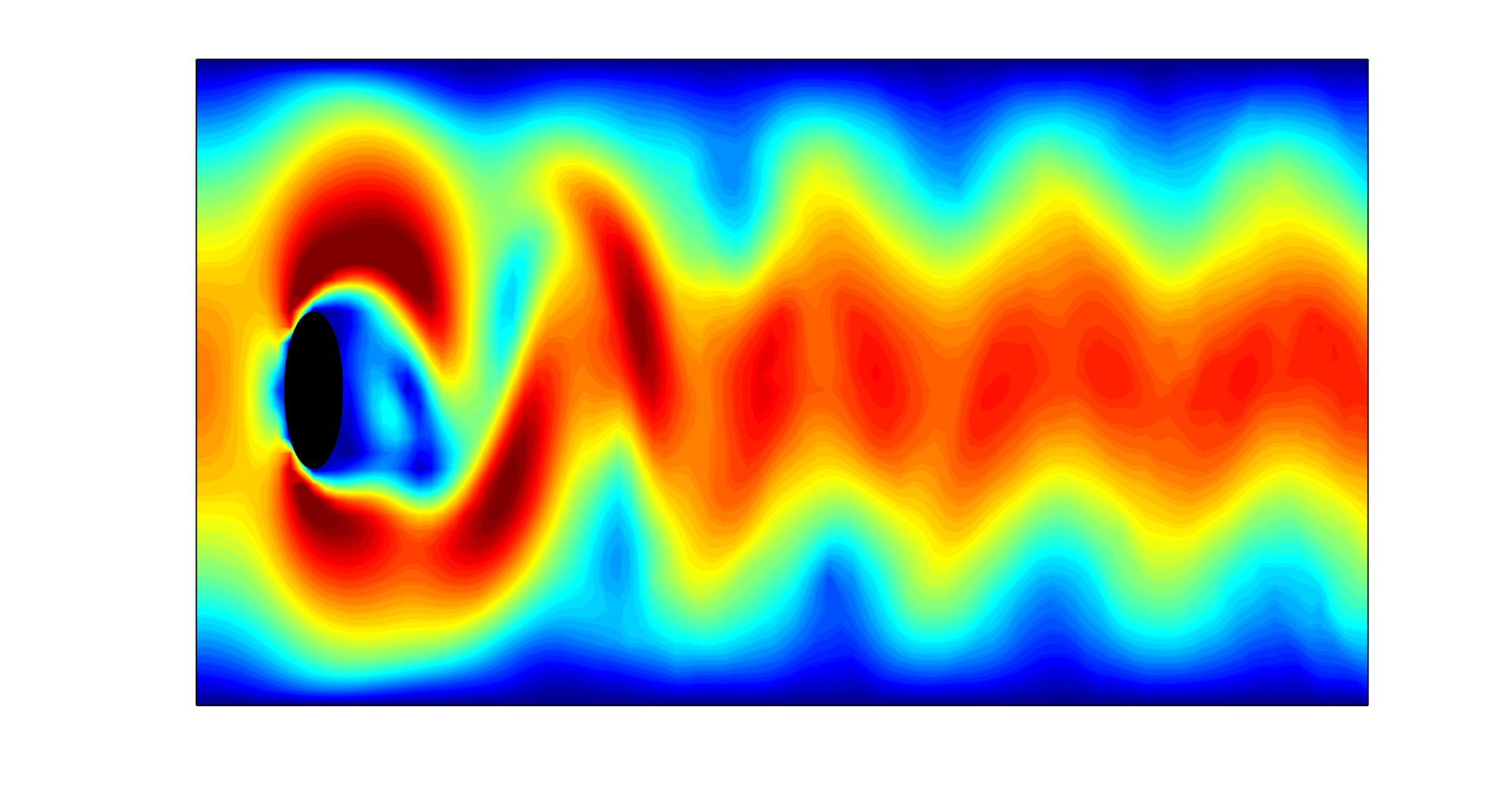}
	\caption{\label{contourcyl} Speed contour plots of DA solutions with $\mu=10$ with $H=0.55$ (left) and $H=0.1375$ (center), and DNS solutions, at times 5, 5.5, 6 and 10.}
	\end{center}
\end{figure}

Results are shown for the $Re$=100 tests in figures \ref{differenceL2}-\ref{contourcyl}.  For all $Re=100$ tests, we use $\mu=10$.  Figure \ref{differenceL2} shows convergence in time of the DA schemes to the DNS.  We observe that the DA solutions from the two finest $H$'s converge to the DNS, and quickly.  For $H=0.275$, the DA solution does appear to be converging, although slowly, and agrees with the DNS to $10^{-5}$ by t=10.  We do not observe convergence for $H=0.55$ in this time interval.  Plots of lift and drag coefficients versus time are shown in figure \ref{difference}, and all DA solution except for $H=0.55$ give good lift and drag predictions by t=7 (the solution from $H=0.55$ never gives good drag coefficient prediction{\color{black})}.

Figure \ref{contourcyl} shows speed contour plots of DA and DNS solutions at t=5 (the start time for DA), 5.5, 6, and 10.  The DA scheme with $H=0.1375$ (middle column) is already close to the DNS by t=5.5, and we observe no difference from the DNS by t=6.  The DA solution from $H=0.55$, on the other hand, does not converge by t=10.  At t=5.5 and t=6, it is clearly quite far from the DNS solution.  By t=10, it looks closer, but still shows significant differences from the DNS.

Overall, we observe good convergence of the DA solution to the DNS solution, provided the coarse mesh is fine enough.  However, `fine enough' is still quite coarse, as we observe good convergence even when only 64 measurement points ($H$=0.275) are used.

\subsubsection{Re=500}

\begin{figure}[!ht]
	\centering
	\includegraphics[width = .32\textwidth,height=.3\textwidth]{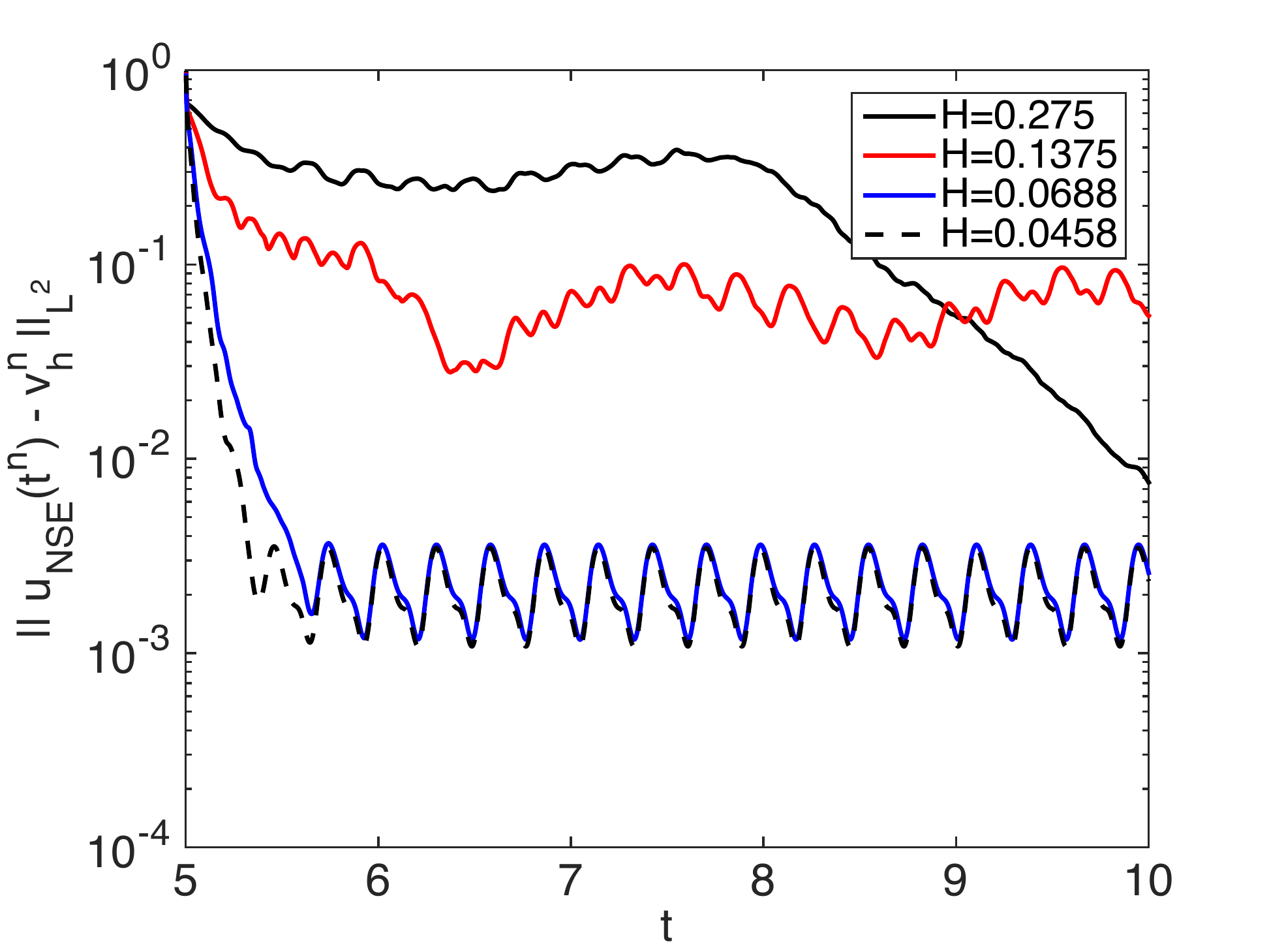}	
	\includegraphics[width = .32\textwidth,height=.3\textwidth]{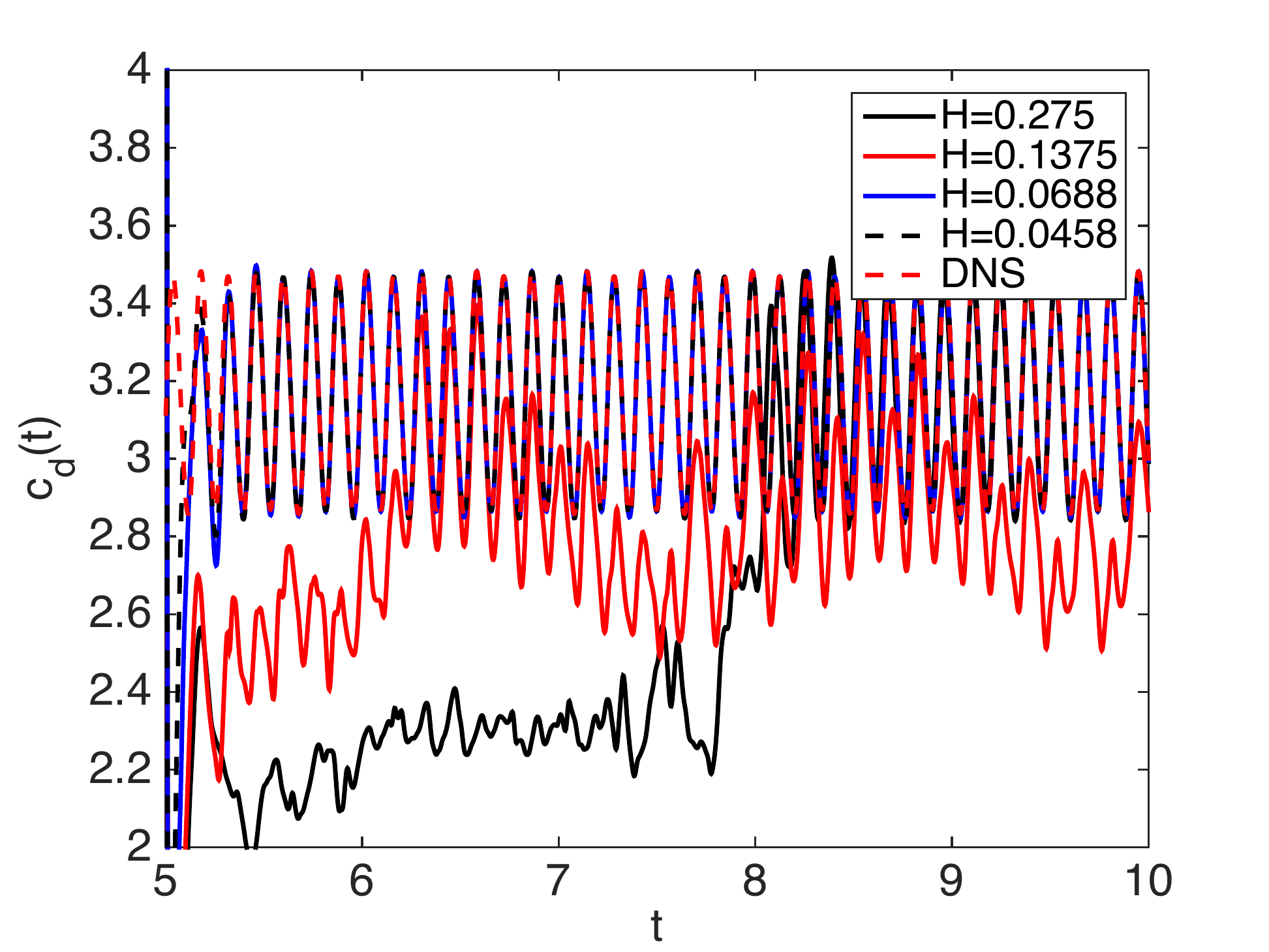}	
	\includegraphics[width = .32\textwidth,height=.3\textwidth]{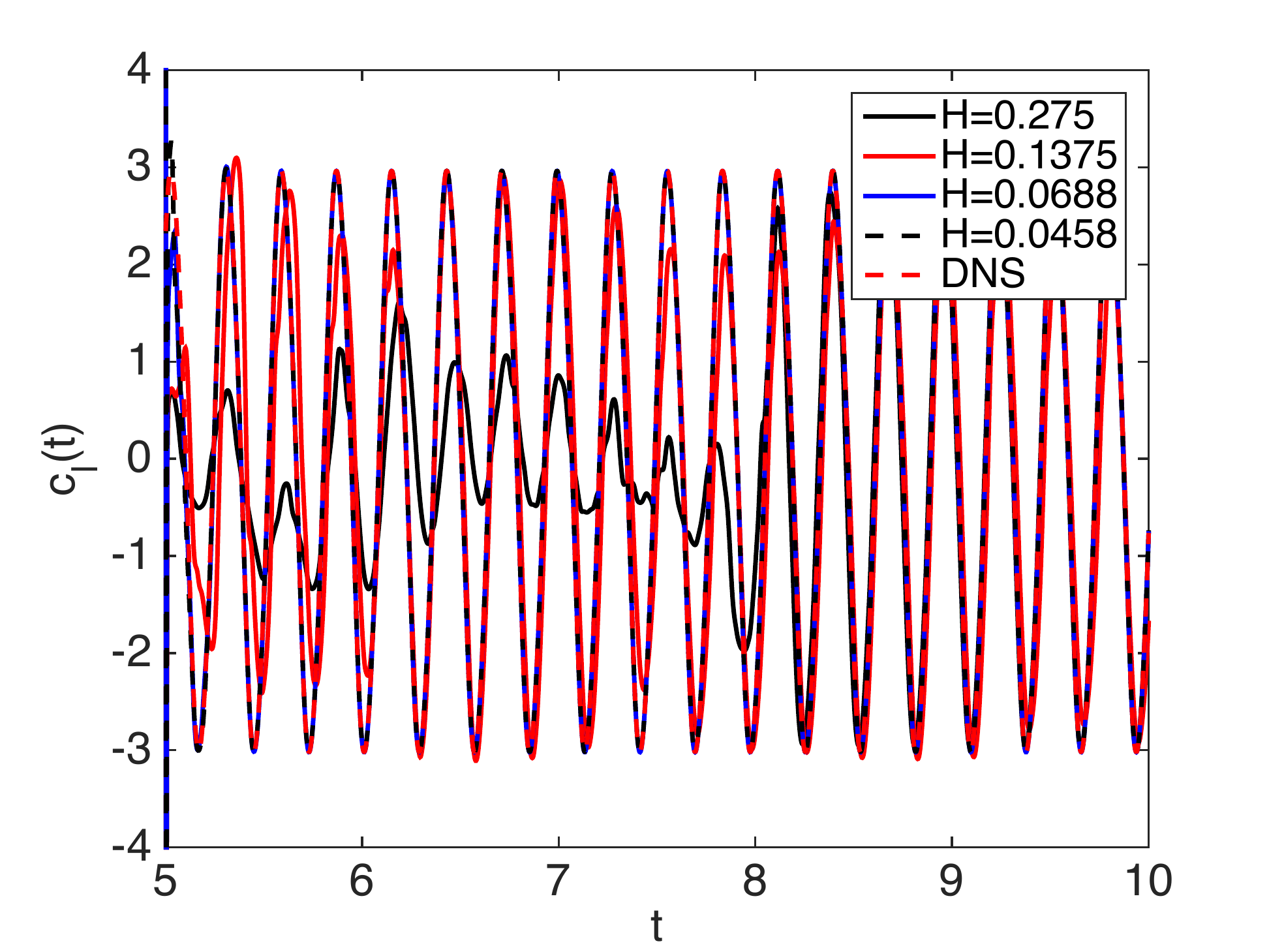}	
	\caption{\label{difference5002} Shown above are the lift and drag coefficient predictions for $Re=500$ simulations for DA with $\mu=10$ and varying $H$, and for the DNS.}
\end{figure}

\begin{figure}[!ht]
\begin{center}
DA, H=0.275 (t=6.0) \hspace{.7in} DA, H=0.0688 (t=6.0) \hspace{.75in} DNS (t=6.0) \ \ \ \ \ \\
\includegraphics[width = .32\textwidth, height=.15\textwidth,viewport=55 20 520 280, clip]{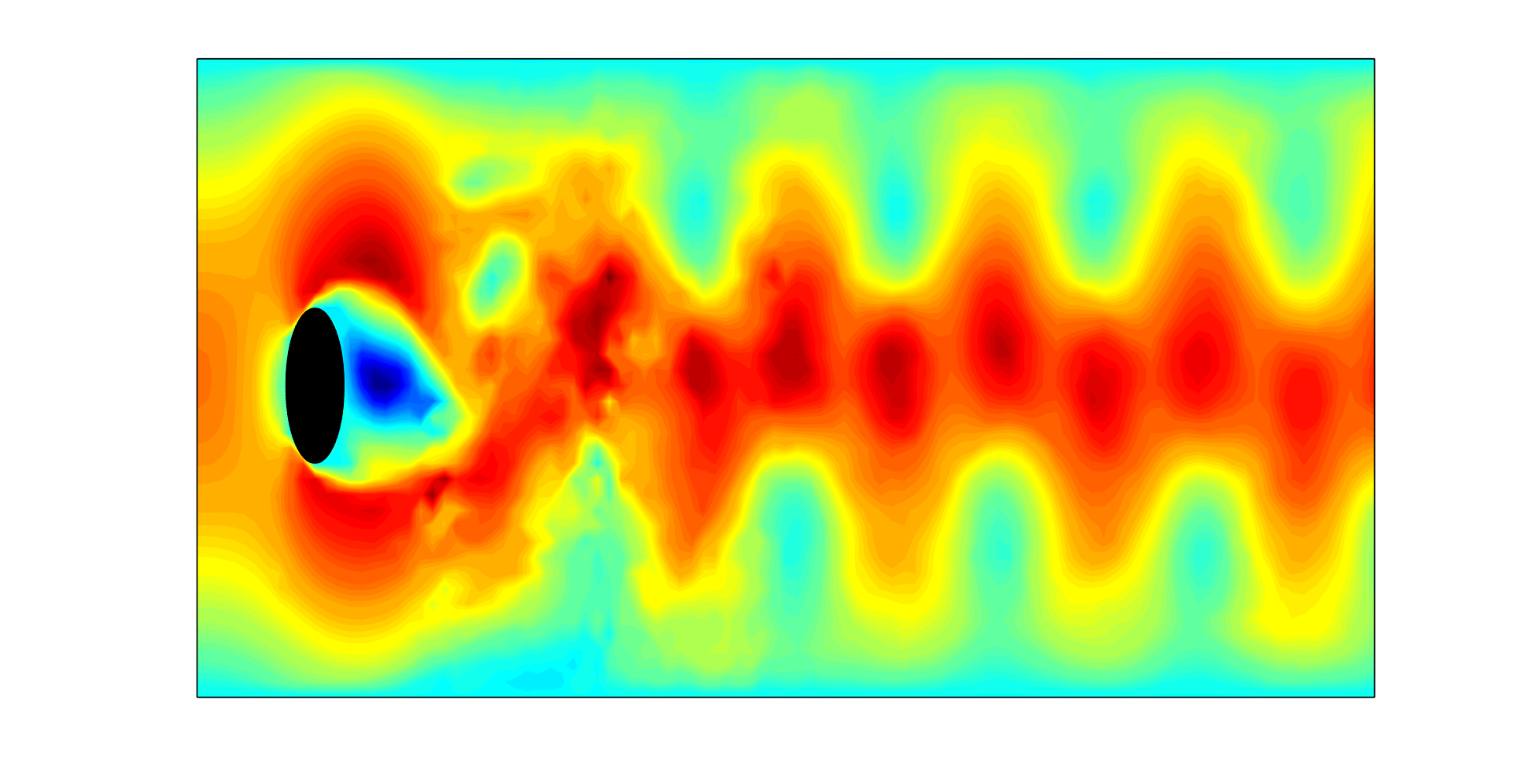}
\includegraphics[width = .32\textwidth, height=.15\textwidth,viewport=55 20 520 280, clip]{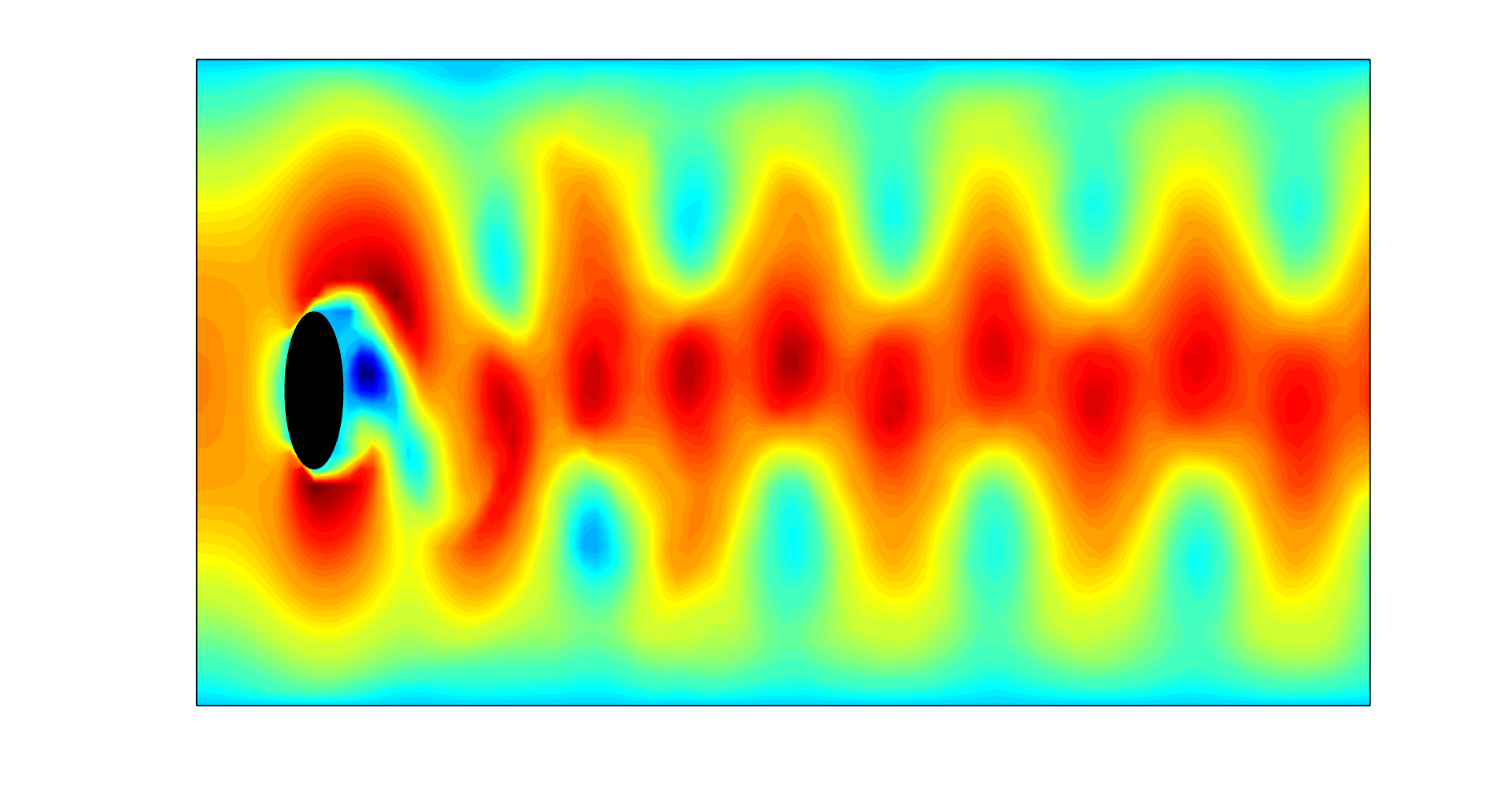}
\includegraphics[width = .32\textwidth, height=.15\textwidth,viewport=55 20 520 280, clip]{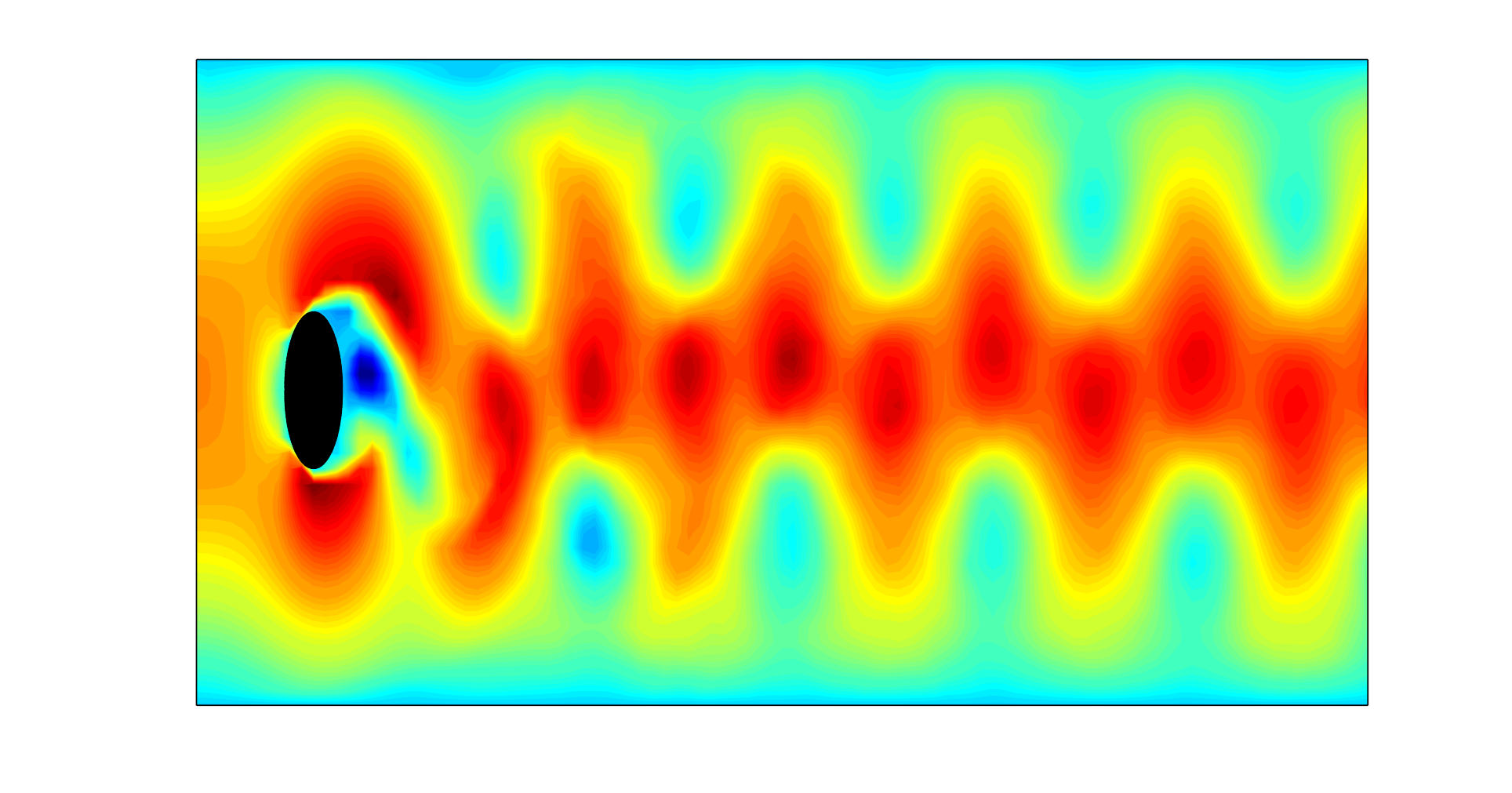}
DA, H=0.275 (t=10) \hspace{.7in} DA, H=0.0688 (t=10) \hspace{.75in} DNS (t=10) \ \ \ \ \ \\
\includegraphics[width = .32\textwidth, height=.15\textwidth,viewport=55 20 520 280, clip]{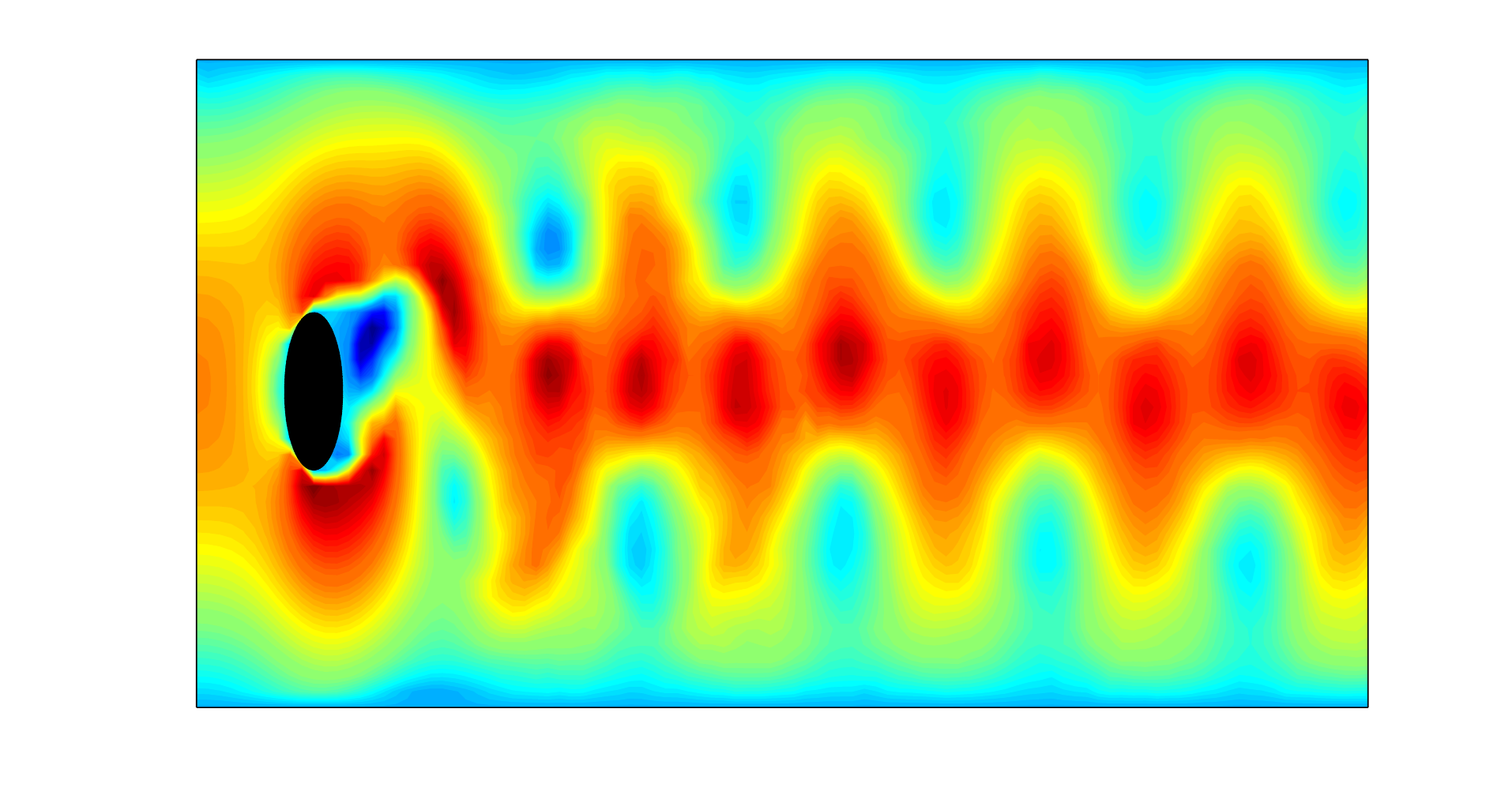}
\includegraphics[width = .32\textwidth, height=.15\textwidth,viewport=55 20 520 280, clip]{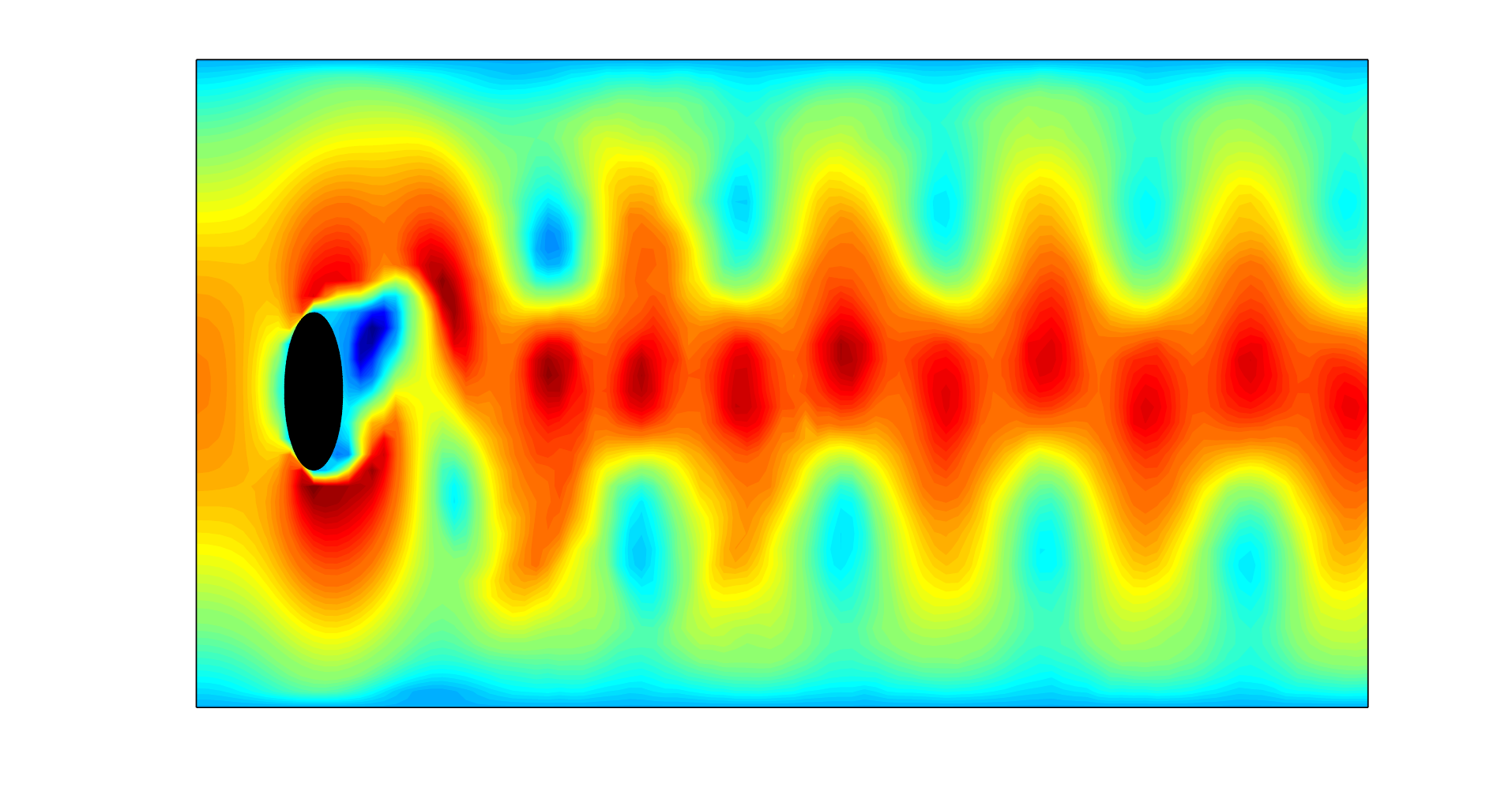}
\includegraphics[width = .32\textwidth, height=.15\textwidth,viewport=55 20 520 280, clip]{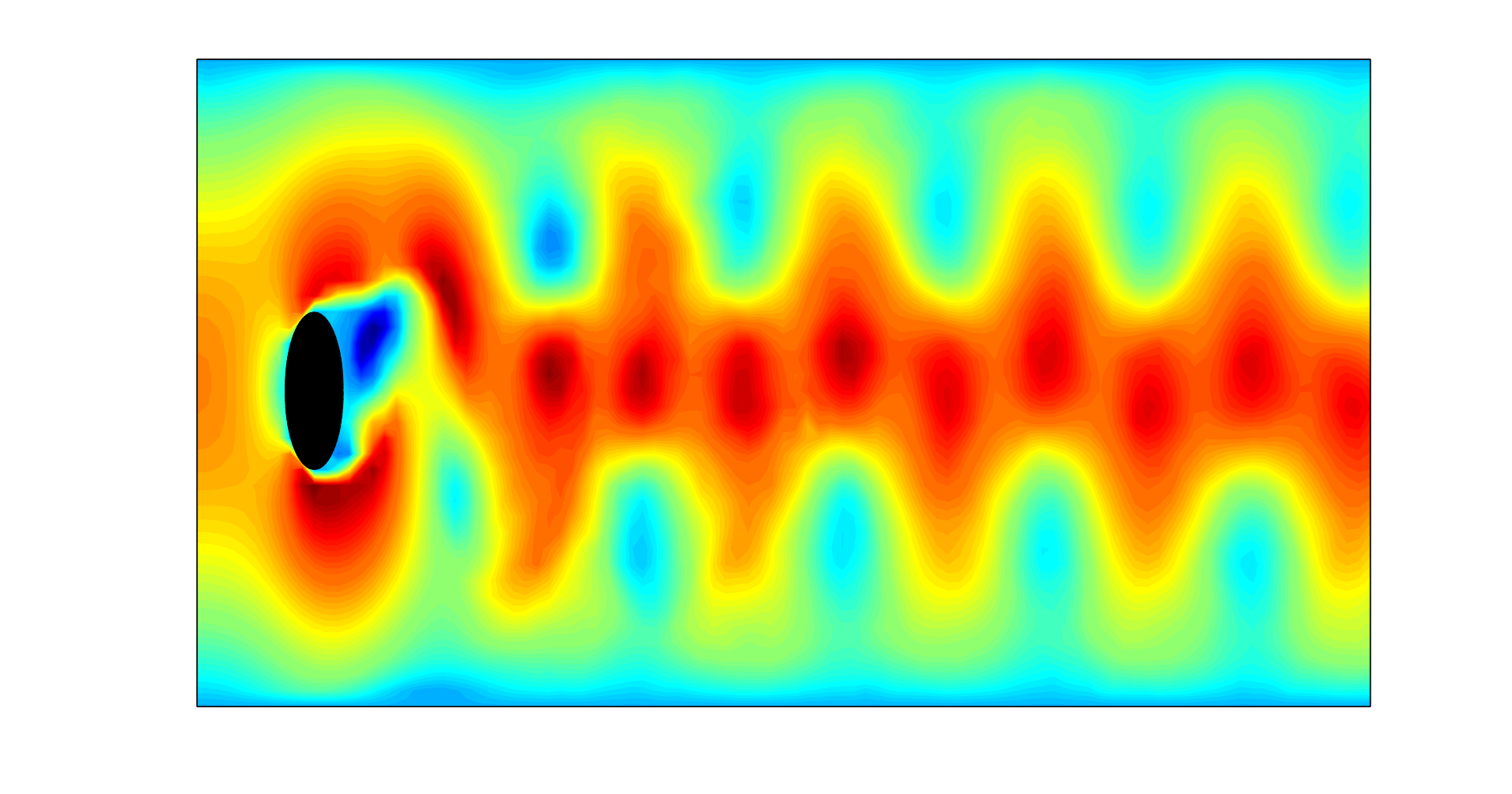}
	\caption{\label{contourcyl500} Speed contour plots of DA solutions for $Re=500$ with $\mu=10$, $H=0.275$ (left) and $H=0.0688$ (center), and DNS solutions, at times 6 and 10.}
	\end{center}
\end{figure}

We now give results for $Re=500$ numerical tests.  We remark again that due to the outflow boundary condition, the analysis in this section is not applicable, since the nonlinear terms behave in a different way.

Results for varying $H$ with $\mu=10$ are shown in figure \ref{difference500}, as $L^2$ error, and lift and drag coefficients.  An interesting phenomena is that the error appears to be bounded below, which does not happen in the $Re=100$ tests.  However, as we see in figures \ref{difference5002} and \ref{contourcyl500}, this level of accuracy of $L^2$ error around $10^{-3}$ is enough so that the lift and drag coefficients are accurately predicted.  Moreover, the contour plots from figure \ref{contourcyl500} match the DNS very well by t=10, both for $H=0.275$ and $H=0.0688$, although at t=6 only the solution with $H=0.0688$ matches the DNS well.

To consider further the seeming lower bound on the error in the $Re=500$ tests so far, we consider additional runs with varying $H$ and $\mu$.  We show the $L^2$ errors for these tests in figure \ref{difference500}, and observe that the error seems to be bounded below by $O(\mu^{-1})$, seemingly independent of $H$ (even though the DNS solution is in the finite element space and thus 0 error is possible, just as in the $Re=100$ case).  Up to this lower bound, the DA solutions converge quickly, in particular for $H=\frac{2.2}{500}$ and $\mu=1000$ the convergence is rapid.

Overall we conclude that results for $Re=500$ are quite good.  While it appears that the error depends on $O(\mu^{-1})$, the only reason why we see this error in these tests is that the DNS was done on the same discretization as the DA.  In practice, there will also be spatial and temporal errors present, and in particular we would expect spatial error to dominate any $O(\mu^{-1})$ errors when $\mu=100$ or $1000$.

\begin{figure}[!ht]
	\centering
	$H = \frac{2.2}{48}$ \hspace{2.5in} $H=\frac{2.2}{500}$ \\
	\includegraphics[width = .49\textwidth,height=.3\textwidth,viewport=0 0 550 400, clip]{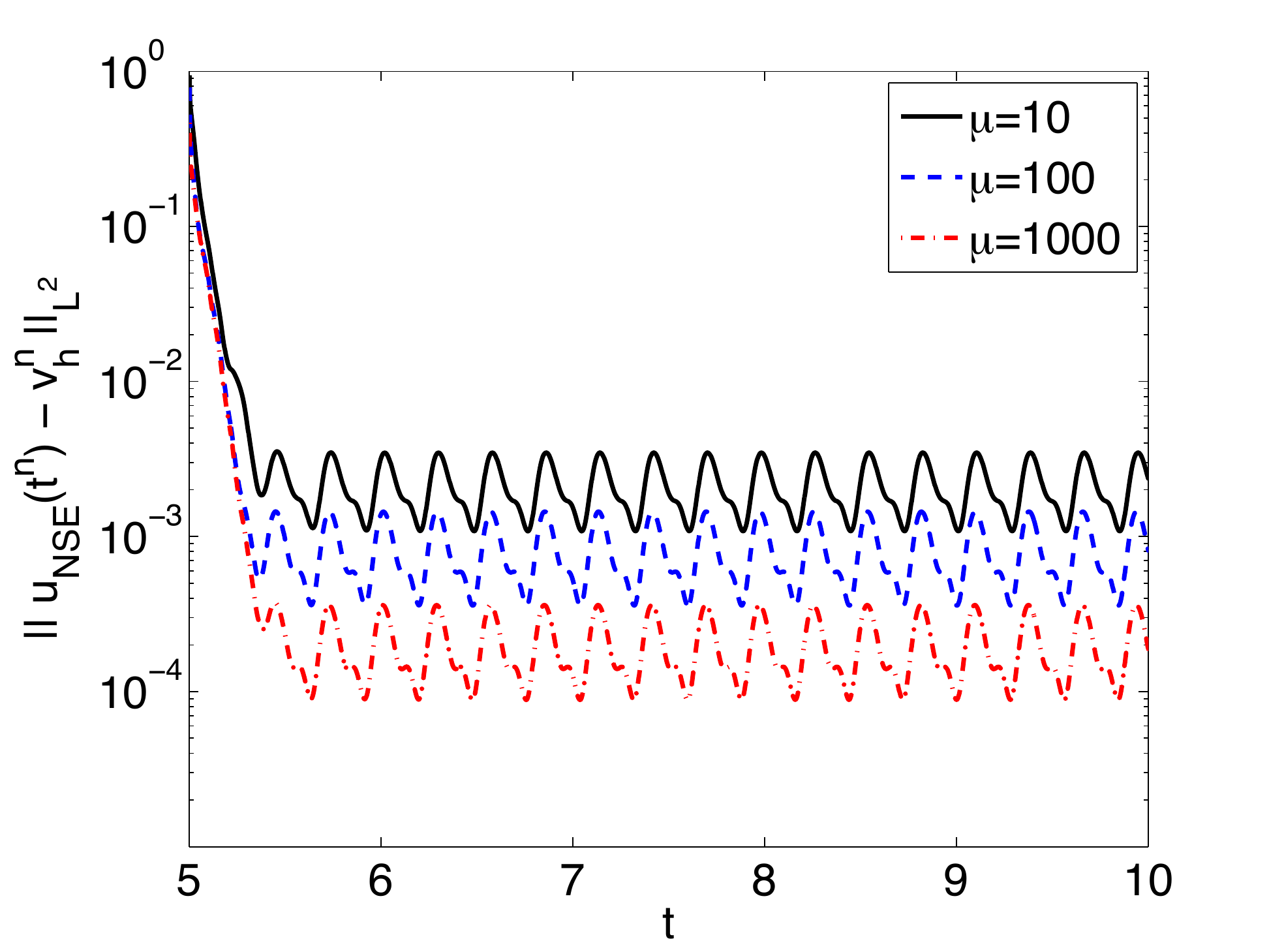}
	\includegraphics[width = .49\textwidth,height=.3\textwidth,viewport=0 0 550 400, clip]{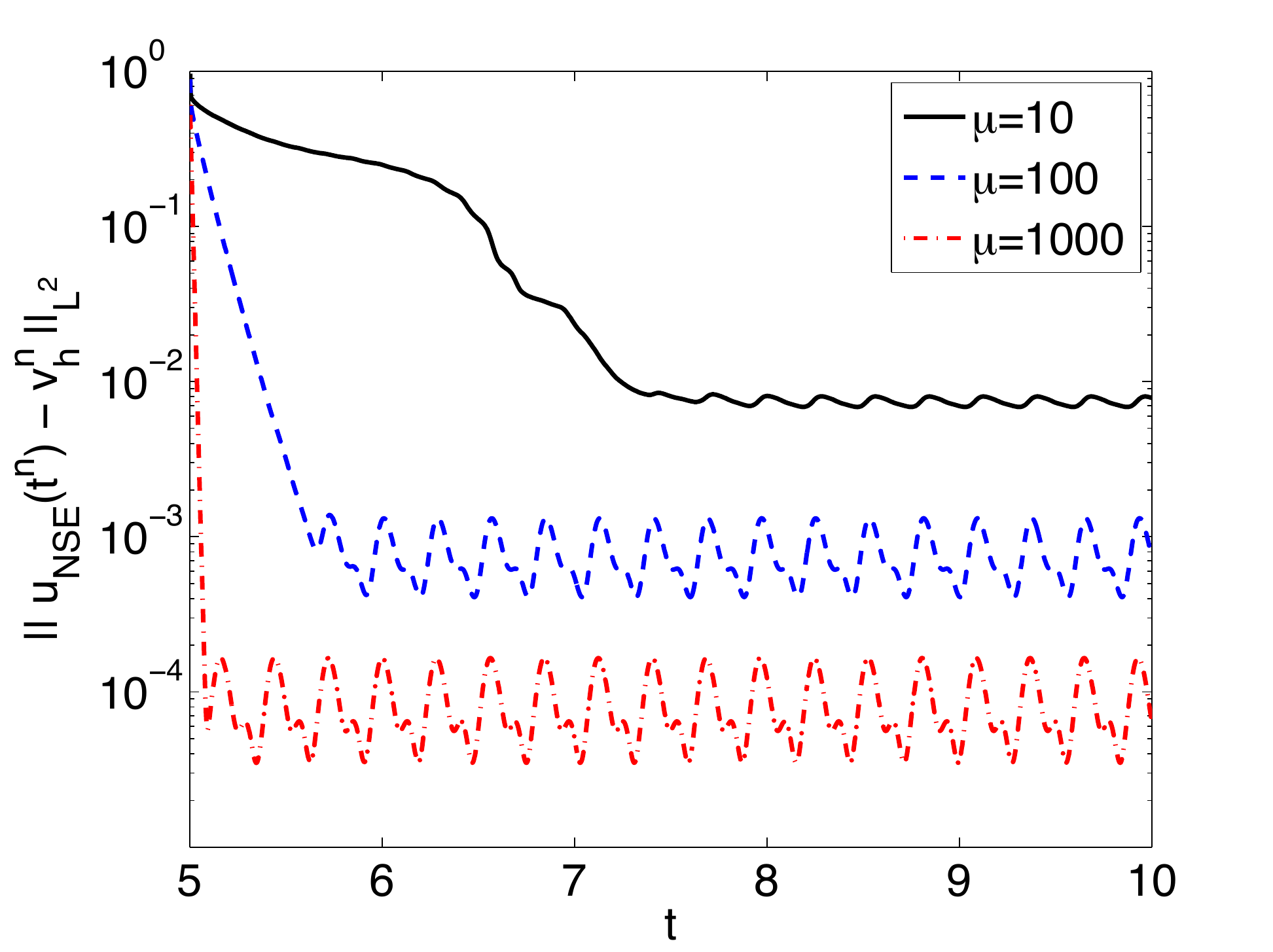}
	\caption{\label{difference500} Shown above are the $L^2$ errors versus time for $Re=500$ simulations with varying $\mu$ and $H=\frac{2.2}{48}$ (left) and $H=\frac{2.2}{500}$ (right).}
\end{figure}

\section{Conclusions and Future Directions}

We have proposed, analyzed and tested a new interpolation operator for use with DA of evolution equations.  The new operator allows for simple implementation of DA, in particular in legacy codes, as the DA can be easily implemented at the linear algebraic level independent of the rest of the discretization.  We prove that the operator has stability and accuracy properties that differ slightly from those laid out in some recent DA papers as being sufficient properties, but are still sufficient for allowing DA algorithms to be long-time stable and accurate (evidenced by our analysis and testing of DA for fluid transport and incompressible NSE).  Our numerical tests show the interpolation used in conjunction with this type of continuous data assimilation is very effective on a range of problems.

There are several important future directions to consider.  First, if given $N$ observation locations for a physical problem, then it {\color{black}may be} the case that better assimilation is possible if the locations are based on the physics of the problem instead of being picked to define a quasi-uniform interpolant.  Second, we assume herein that the observation points are fixed and are also nodes on the fine mesh; removing each of these assumptions would help the proposed methods be more applicable.  Another important problem is to consider DA algorithms with different types of boundary conditions.  We observe above that for NSE with outflow boundary conditions, the error seems to depend on $O(\mu^{-1})$ while in the case of full Dirichlet boundary conditions it does not.  Better understanding of how DA algorithms behave under other types of physical boundary conditions would be helpful to practioners.

\bibliographystyle{abbrv}
\bibliography{references,LariosBiblio}

\end{document}